\numberwithin{equation}{section}
\newtheorem{thm}{Theorem}[section]
\newtheorem{lemma}{Lemma}[section]
\newtheorem{cor}{Corollary}[section]
\newtheorem{pro}{Proposition}[section]
\newtheorem{defi}{Definition}[section]
\newtheorem{theoremalph}{Theorem}
\begin{document}
\title[Dual curvature measures on convex functions] {Dual curvature measures on convex functions and  associated Minkowski problems}

\author{Niufa Fang}
\address{\parbox[l]{1\textwidth}{School of Mathematics, Hunan University, Changsha, Hunan 300071, China.}}
\email{fangniufa@hnu.edu.cn}

%%
%\author{Zizhou Tang}
%\address{\parbox[l]{\textwidth}{Chern Institute of Mathematics \& LPMC, Nankai University, Tianjin 300071, China.}}
%\email{zztang@nankai.edu.cn}
%
%
\author{Jiazu Zhou*}
\address{\parbox[l]{1\textwidth}{School of Mathematics and
Statistics, Southwest University, Chongqing 400715, China.\\
College of Science, Wuhan University of Science and Technology, Wuhan, Hubei 430081, China.}}
 \email{zhoujz@swu.edu.cn }

\subjclass[2010]{52A40, 26B25, 26D10} \keywords{Convex functions; functional dual curvature measures; dual quermassintegrals, functional dual Minkowski problem. }

\thanks{The first author is supported in part by NSFC (No.12001291).}
\thanks{*The corresponding author is supported in part by NSFC (No.12071318).}

\maketitle

\begin{abstract}
In this paper, the $q$-th dual curvature measure is extended  to  convex functions and the associated Minkowski problem is posed. A special case includes the $q$-th dual curvature measure of convex bodies  which defined by Huang, Lutwak, Yang and Zhang.   Existence for  the functional  dual   Minkowski problem is showed  when $q\leq0$ and the  uniqueness  part   is  obtained with some assumptions.
\end{abstract}

%%%%%%%%%%%%%%%%%%%%%%%%%%%%%%%%%%%%%%%%%%%%%%%%%%%%%%%%%%%%%%%%%%%%%%%%%%
\vskip 0.5cm
\section{Introduction}
\vskip 0.3cm
%%%%%%%%%%%%%%%%%%%%%%%%%%%%%%%%%%%%%%%%%%%%%%%%%%%%%%%%%%%%%%%%%%%%%%%%%%

The classical Brunn-Minkowski theory originated with the thesis of Brunn in 1887 and  is in its essential parts with the creation of Minkowski around the turn of the century.  Mixed volumes are important geometric concepts, which include quermassintegrals, surface area  measures, curvature measures and mixed area measures.  An interesting problem related to surface area  measures is the classical  Minkowski problem that  is crucial important  in the  Brunn-Minkowski theory:

\vskip 0.2cm
\emph{What are necessary and sufficient conditions
for a finite Borel measure $\mu$ on the unite sphere $S^{n-1}$  so that $\mu$ is the surface area measure of a convex body in $\mathbb{R}^n$?}

\vskip 0.2cm

The Minkowski problem was solved in multiple ways by Minkowski, Aleksandrov, and Fenchel and Jessen \cite{Minkowski,Schneider2014}. Landmark contributions to establishing regularity
for the Minkowski problem are due to  Lewy \cite{Lewy}, Nirenberg \cite{Nirenberg}, Pogorelov \cite{pogorelov1, pogorelov2},
Cheng and Yau \cite{ChengYau}, %Pogorelov \cite{Pogorelov},
Caffarelli \cite{Caffarelli}, Guan and Ma \cite{GuanMa} and others.

\vskip 0.2cm

If the measure has a density $f$ with respect to the Lebesgue measure of the unit sphere $S^{n-1}$, the  Minkowski problem is equivalent to the study of solution to the following Monge-Amp$\grave{\text{e}}$re equation on the unit sphere
\begin{eqnarray*}
\det(h_{ij}+\delta_{ij}h)=f,
\end{eqnarray*}
where $h$ is the unknown function on $S^{n-1}$ to be found,   $h_{ij}$ is the covariant derivative of $h$ with  respect to an orthonormal frame on $S^{n-1}$ and $\delta_{ij}$ is the Kronecker delta.

\vskip 0.2cm

The $L_p$ Brunn-Minkowski theory, a generalization of the classical Brunn-Minkowski theory,   attracted  increasing interest in the recent years.  Lutwak \cite{lutwak1993} introduced the \emph{$L_p$ surface area measure},  a Borel measure  on convex body in $\mathbb{R}^n$  with  the origin in its interior. The $L_p$ surface area measure is an important concept in the $L_p$ Brunn-Minkowski theory, and it provides  an  integral representation  for  the $L_p$ mixed volume. By the $L_p$ cosine transform  of the $L_p$ surface area measure, Lutwak, Yang and Zhang \cite{LYZ2000} defined the $L_p$ Petty projection body and established  the well-known \emph{$L_p$ Petty projection inequality}.  Finding the necessary and sufficient conditions for a given measure to guarantee that it is the $L_p$ surface area measure is the existence problem called the \emph{$L_p$ Minkowski problem} posed in \cite{lutwak1993}. Solving the $L_p$ Minkowski problem requires solving a degenerate and singular Monge-Amp\`{e}re type equation on the unit sphere. The $L_p$ Minkowski problem has been solved for $p\geq1$ \cite{ChouWang,HugLYZ,lutwak1993,LYZ2004},  and  for the critical case $p<1$  \cite{blyz,stancu1,stancu2,Zhu1,Zhu2,Zhu3,Zhu5}. The solution of the $L_p$ Minkowski problem and the $L_p$ Petty projection inequality are the key tools used for establishing the \emph{$L_p$ affine Sobolev inequality} and analogues \cite{CianchiLYZ,HaberlSchuster,HaberlSchusterXiao,LYZ2002,Zhang1999}.

\vskip 0.2cm

A theory analogous to the Brunn-Minkowski theory was introduced in \cite{lutwak1975}. It demonstrates a remarkable duality in convex geometry, and thus is called the \emph{dual Brunn-Minkowski theory}. The important work of the duality between projection bodies and intersection bodies exhibited in \cite{lutwak1988}.  The dual Brunn-Minkowski theory has attracted  extensive attention since the intersection body helped achieving a major breakthrough in the solution of the celebrated Busemann-Petty problem. In the past decades, the dual Brunn-Minkowski theory has  a rapid growth \cite{Gardner1994,Gardner1994tams,Gardner1994ann,lutwak1990,Zhang1994,Zhang1999ann,ZhuZhouXu}.

\vskip  0.2cm

Very recently, Huang, Lutwak, Yang and Zhang \cite{HuangLYZ} introduced  \emph{dual curvature measures}. This new family of measures miraculously connects   the cone-volume measure and    Aleksandrov's integral curvature. The associated Minkowski problems of  dual curvature measures are called    \emph{dual Minkowski problems}.  Since the groundbreaking work of  Huang, Lutwak, Yang and Zhang,   the intrinsic partial differential equations that arise  within the dual Brunn-Minkowski theory which   wait a full 40 years after the birth of the dual theory to emerge.  %Attentions have  aroused since the dual Minkowski problem was posed.
 Some  significant   breakthroughs of dual Minkowski problems have been achieved \cite{CHZ,BoroczkyHenkPollehn,blyzz,Gardner2018,HuangLYZ2,huangzhao,LiShengWang,LYZ2018,WangFangZhou,XY,Zhao1,Zhao2,ZhuYe2018}  but many cases   remain open. 

\vskip 0.2cm

Motivated by works of Huang, Lutwak, Yang and Zhang \cite{HuangLYZ}, we will introduce   the  \emph{functional  dual  curvature measures} on  the setting  of convex functions.  The new functional dual curvature measures include dual curvature measures introduced in \cite{HuangLYZ} as a special case.  All works in this paper are characterized as  the \emph{functional dual Brunn-Minkowski theory},  a new branch  in  convex geometric analysis.

\vskip 0.2cm

It is well known that the classical  isoperimetric inequality is equivalent to the Sobolev inequality. This equivalence   shows a close  connection between geometry and analysis. The geometric inequalities  and geometric problems of  functions received considerable attention. Actually an  analytic inequality  contains  more information than its  corresponding   geometric inequality.  During the past decades, many analytic  inequalities with geometric background  were found \cite{Alonso1,Alonso,A1,A2,C4,C1,C2,FXY,FXZZ,fangzhou,F1,F2,Lin,Milman2,Rotem2} basically from the viewpoint of integral and convex geometry.

\vskip 0.2cm

The algebraic structure (i.e., ``Minkowski sum" and ``scalar multiplication") on the set of convex bodies is the cornerstone in the classical Brunn-Minkowski theory. By using the \emph{infimal convolution} and  \emph{right scalar multiplication} of convex functions,  Colesanti and Fragal$\grave{\text{a}}$ \cite{Colesanti} introduced the  ``sum" and ``scalar multiplication"   of log-concave functions. Colesanti and Fragal$\grave{\text{a}}$'s  work belongs to    the \emph{functional Brunn-Minkowski theory}. We recall that the  infimal convolution and  the  right scalar multiplication. Let $\varphi,\psi :\mathbb{R}^n\to\mathbb{R}\cup\{+\infty\}$ be convex functions. The \emph{infimal convolution} of $\varphi$ and $\psi$   is defined by  \cite{Rockafellar},
\begin{eqnarray*}\label{}
\varphi\Box \psi(x)=\inf_{y\in\mathbb{R}^n}\{\varphi(x-y)+\psi(y)\} \quad \forall x\in\mathbb{R}^n,
\end{eqnarray*}
and the \emph{right scalar multiplication}, $\varphi t$,  is defined by,
\begin{eqnarray*}\label{}
(\varphi t)(x)=t\varphi\left(\frac{x}{t}\right), \quad\text{for}\quad t>0.
\end{eqnarray*}\vskip 0.2cm

In order  to study the  functional dual Minkowski problems, the crucial step is to properly define the functional  dual quermassintegrals. For a  non-negative convex function  $\varphi:\mathbb{R}^n\to\mathbb{R}\cup\{+\infty\}$  and $q\in \mathbb{R}$, the  $(n-q)$-th  dual quermassintegral $\widetilde{W}_{n-q}(\varphi)$ is defined by 
  \begin{eqnarray}
\widetilde{W}_{n-q}(\varphi)=\int_{\mathbb{R}^n}\varphi(x)^{-q}d\gamma_n(x),
\end{eqnarray}
 where $d\gamma_n(x)=(2\pi)^{-\frac{n}{2}}e^{-\frac{|x|^2}{2}}dx$ denotes the Gaussian probability measure.  We will prove  that the $(n-q)$-th dual quermassintegral, $\widetilde{W}_{n-q}(\varphi)$,  of the convex function $\varphi$ includes the $(n-q)$-th dual quermassintegral of a convex body.

For $q\in \mathbb{R}\setminus\{0\}$,   the $(n-q)$-th \emph{dual mixed quermassintegral},  $\widetilde{W}_{n-q}(\varphi,\psi)$,  of non-negative convex functions $\varphi$ and $\psi$  is defined by 
\begin{eqnarray}
\widetilde{W}_{n-q}(\varphi,\psi)=\frac{1}{q}\lim_{t\rightarrow0^{+}}\frac{\widetilde{W}_{n-q}(\varphi\square(\psi t))-\widetilde{W}_{n-q}(\varphi)}{t}.
\end{eqnarray}

The  Minkowski type inequality for the functional dual mixed quermassintegrals can be stated as follows:
\vskip 0.2cm
\textbf{Minkowski type inequality.} \emph{If $q\leq0$ and $\varphi,\psi$ are non-negative convex functions in $\mathbb{R}^n$, then \begin{eqnarray}\label{Minkowski inequality0}
\widetilde{W}_{n+1-q}(\varphi,\psi)\geq\frac{\widetilde{W}_{n+1-q}(\psi)^{\frac{1}{1-q}}
 -\widetilde{W}_{n+1-q}(\varphi)^{\frac{1}{1-q}}}{\widetilde{W}_{n+1-q}(\varphi)^{\frac{q}{1-q}}}
 +\widetilde{W}_{n+1-q}(\varphi,\varphi),
\end{eqnarray}
with equality  if and only if $\varphi(x)=\psi(x-x_0)$ for some $x_0\in \mathbb{R}^n$ when $q=0$, and $\varphi(x)=\psi(x)$ when $q<0$. }

It  should be pointed  out  that $\widetilde{W}_{n+1-q}(\varphi,\varphi)$ does not agree with $\widetilde{W}_{n+1-q}(\varphi)$ but  it  satisfies
\begin{eqnarray*}
q\widetilde{W}_{n-q}(\varphi,\varphi)
=(n-q)\widetilde{W}_{n-q}(\varphi)-\int_{\mathbb{R}^n}|x|^2
\varphi(x)^{-q}d\gamma_n(x),
\end{eqnarray*}
for $q\in \mathbb{R}\setminus\{0\}$.

If $q\leq 0$  and $\varphi,\psi$ satisfy some additional conditions (see details  in Theorem \ref{integral formula}),  we prove that the functional dual mixed  quermassintegral  has  the following integral representation: 
\begin{eqnarray}\label{another definition of dual curvature measure}
\frac{d}{dt}\widetilde{W}_{n+1-q}(\varphi\Box (\psi t))\Big|_{t=0^+}=\int_{\mathbb{R}^n}\psi^*(x)d\widetilde{C}_q(\varphi,x),
\end{eqnarray}
where $\psi^*$ is  the {\it Legendre transform} of $\psi$ (defined in Section \ref{Background}) and the Borel measure $\widetilde{C}_q(\varphi,\cdot)$ on $\mathbb{R}^n$ is defined by 
$$
\widetilde{C}_q(\varphi,\cdot)=(\nabla \varphi)_{\sharp}(\varphi^{-q}d\gamma_n).
$$
When limited to a subclass of convex functions, $\widetilde{C}_q(\varphi,\cdot)$ reduces to  the $q$-th dual curvature measure of a convex body. The $q$-th dual curvature measure of the convex function $\varphi$ is also called the \emph{functional $q$-th  dual curvature measure}. It  leads naturally to the following Minkowski problem involving the functional $q$-th  dual curvature measure.

\vskip 0.2cm
\textbf{The  functional dual Minkowski problem.}  \emph{Let $\mu$ be a  finite Borel measure on $\mathbb{R}^n$ and $q\in \mathbb{R}$. Find the necessary and sufficient conditions on $\mu$ so that it is the $q$-th dual curvature measure $\tau\widetilde{C}_q(\varphi,\cdot)$ of a convex function $\varphi$ in $\mathbb{R}^n$, where the constant $\tau\in \mathbb{R}$.}
\vskip 0.2cm

We will show that  when the measure $\mu$ has a density function $g:\mathbb{R}^n\rightarrow \mathbb{R}$, the partial differential equation for  the functional dual Minkowski problem is a  Monge-Amp\`{e}re type equation: 
\begin{eqnarray}
(2\pi)^{-\frac{n}{2}}f(\nabla f^*
)^{-q}e^{-\frac{|\nabla f^*|^2}{2}}\det(\nabla^2f^*)=g \quad \text{on}\quad \Omega,
\end{eqnarray}
where $f$ is the unknown convex function in $\mathbb{R}^n$ to be found, $\nabla f$ and $\nabla^2 f$ denote  respectively the gradient vector and the Hessian matrix of $f$ with respect to an orthonormal frame in $\mathbb{R}^n$, and $f^*$ denotes the Legendre transform of $f$ (see the detailed  definition in Section \ref{Background}), $\Omega=\{x\in \mathbb{R}^n: f(x)<+\infty\}\cap \{x\in \mathbb{R}^n:f^*(x)<+\infty\}$.

\vskip 0.2cm

We solve  the functional dual Minkowski problem when  $q\leq0$.

\vskip 0.3cm 
\textbf{Existence of solution to the functional dual Minkowski problem.}  \emph{Let $\mu$ be a non-zero finite Borel measure with $\int_{\mathbb{R}^n}|x|d\mu(x)<\infty$ on  $\mathbb{R}^n$  and $q\leq0$.   If $\mu$ is not supported in a lower-dimensional subspace, then there exists a constant $\tau\in\mathbb{R}$  and   a non-negative,  convex function $\varphi$ with finite  $\int_{\mathbb{R}^n}\varphi(x)^{1-q}d\gamma_n(x)$ such that $\mu(\cdot)=\tau\widetilde{C}_q(\varphi,\cdot)$.  Moreover, if $\varphi$ is essentially-continuous  then  $\widetilde{C}_q(\varphi,\cdot)$ is not supported in a lower-dimensional subspace}.
\vskip 0.3cm 
We remark that a convex function $\varphi : \mathbb{R}^n\rightarrow\mathbb{R}\cup\{+\infty\}$ is  \emph{essentially-continuous} if $\varphi$ is lower semi-continuous and if the set of points where $\varphi$ is discontinuous has zero $\mathcal{H}^{n-1}$-measure,  where $\mathcal{H}^{n-1}$ is the $(n-1)$-dimensional  Hasusdorff measure.

\vskip 0.2cm

By the  Minkowski type inequality (\ref{Minkowski inequality0}), we establish  the uniqueness of the solution to the   functional dual Minkowski problem.

\vskip 0.2cm

\textbf{Uniqueness of the solution to the functional dual Minkowski problem.} \emph{Let $q\leq0$ and let $\varphi_1,\varphi_2$ be non-negative convex functions in $\mathbb{R}^n$ with $\varphi_1(0)=\varphi_2(0)=0$. If there exist $c_1,c_2>0$ such that $\varphi_1^*-c_2\varphi_2^*$ and $\varphi_2^*-c_1\varphi_1^*$ are convex,   then $\widetilde{C}_{q-1}(\varphi_1,\cdot)=\widetilde{C}_{q-1}(\varphi_2,\cdot)$ implies that  $\varphi_1$ and $\varphi_2$ agree up to a translation when $q=0$, and $\varphi_1=\varphi_2$ almost everywhere when $q<0$.}

\vskip 0.2cm 
In \cite{HuangLYZ}, authors proved that the $0$-th dual curvature measure  is  the Aleksandrov's integral curvature, and the related Minkowski problem is called  Aleksandrov problem. Therefore, the existential part  can be viewed   as the solution to the \emph{functional Aleksandrov problem} when $q=0$. Similarly, the functional $n$-th  dual curvature measure, $\widetilde{C}_n(\varphi,\cdot)$, is the functional cone-volume measure,  and the corresponding Minkowski problem is the \emph{functional log-Minkowski problem}.
Unfortunately, the uniqueness part  does not imply  these two special cases.
In this paper, we only consider  the functional dual Minkowski problem for the case $q\leq0$.

%\vskip 0.2cm
%
%This paper is organized as follows. In Section \ref{Background}, we collect some useful  facts of convex functions and the basic properties of the dual curvature measure. In Section \ref{functional dual curvature measures}, we will define the $q$-th dual  curvature measure for convex functions, the basic properties are also discussed. The functional mixed dual quermassintegral  will be introduced in Section \ref{Dual quermassintegrals  for convex functions}, the fundamental equality and  inequality  are provided. The proofs of our main theorems (Theorem \ref{main theorem} and Theorem \ref{uniqueness theorem}) are finished in Section \ref{Dual Minkowski problem for convex functions}.
%
%
%

~~\vskip 0.3cm
%%%%%%%%%%%%%%%%%%%%%%%%%%%%%%%%%%%%%%%%%%%%%%%%%%%%%%%%%%%%%%%%%%%%%%%%%%
\section{Preliminaries}\label{Background}
%%%%%%%%%%%%%%%%%%%%%%%%%%%%%%%%%%%%%%%%%%%%%%%%%%%%%%%%%%%%%%%%%%%%%%%%%%

\subsection{The setting of convex functions}
~~\vskip 0.3cm
A function  $\varphi: \mathbb{R}^n\rightarrow \mathbb{R}\cup \{+\infty\}$ is convex  if for every $x,y\in\mathbb{R}^n$ and $\lambda\in[0,1]$
$$
\varphi((1-\lambda)x+\lambda y)\leq (1-\lambda)\varphi(x)+\lambda \varphi(y).
$$
Let
$$
\text{dom}(\varphi)=\{x\in \mathbb{R}^n: \varphi(x)\in \mathbb{R}\}.
$$
We say that $\varphi$ is \emph{proper} if $\text{dom}(\varphi)\neq\emptyset$.
   The \emph{Legendre transform} of $\varphi$ is the convex function defined by
\begin{eqnarray}\label{Fenchel conjugate}
\varphi^*(y)=\sup_{x\in\mathbb{R}^n}\left\{\langle x,y\rangle-\varphi(x)\right\}\quad\quad\forall y\in\mathbb{R}^n.
\end{eqnarray}
Clearly, $\varphi(x)+ \varphi^*(y)\geq \langle x,y\rangle$ for all $x, y\in\mathbb{R}^n $. There is an equality if and only if  $\varphi(x)<+\infty$ and $y\in \partial\varphi(x)$,  the subdifferential  of $\varphi$ at $x$  define by
\begin{eqnarray}\label{subdifferential}
\partial\varphi(x)=\{z\in \mathbb{R}^n: \varphi(y)\geq\varphi(x)+\langle z,y-x\rangle \ \text{for all} \ y\in \mathbb{R}^n\}.
\end{eqnarray}
 Hence,
$$\varphi^*(\nabla \varphi(x))+\varphi(x)= \langle x,\nabla \varphi(x)\rangle.$$
The following elementary property was proved in \cite{Rockafellar},
\begin{eqnarray}\label{elementary property of Fenchel conjugate}
\varphi^*(0)=-\inf\varphi.
\end{eqnarray}

For convenient, we set
\begin{eqnarray*}\label{}
\mathcal{L}&=&\left\{\varphi:\mathbb{R}^n\rightarrow \mathbb{R}\cup \{+\infty\} \Big| \quad  \varphi \text{ proper, convex, }\lim_{|x|\rightarrow+\infty}\varphi(x)=+\infty\right\},
\end{eqnarray*}
and   $\mathcal{L}_o$,  all non-negative convex functions of the subset of $\mathcal{L}$.

 A  function $\varphi \in\mathcal{L}$ is  \emph{lower semi-continuous}, if the subset $\{x\in \mathbb{R}^n: \varphi(x)>t\}$ is an open set for any $t\in(-\infty,+\infty]$. The function $\varphi^*$ is always convex and lower semi-continuous.  If $\varphi$ is a lower semi-continuous  convex function, then  $\varphi^*$ is also a lower semi-continuous  convex function, and $\varphi^{**}=\varphi$.  It should be noted that  for a general  function $\varphi$ one always  has $\varphi^{**}\leq\varphi$.

For $\varphi, \psi\in\mathcal{L}$, the \emph{infimal convolution} is defined by
\begin{eqnarray}\label{infimal convolution}
\varphi\Box \psi(x)=\inf_{y\in\mathbb{R}^n}\{\varphi(x-y)+\psi(y)\} \quad \forall x\in\mathbb{R}^n,
\end{eqnarray}
and the \emph{right scalar multiplication}  is defined by,
\begin{eqnarray}\label{right scalar multiplication}
(\varphi t)(x)=t \varphi\left(\frac{x}{t}\right), \quad\text{for}\quad t>0.
\end{eqnarray}

A function $f:\mathbb{R}^n\rightarrow \mathbb{R}$ is \emph{log-concave} if $
f=e^{-\varphi}$ with  $\varphi\in\mathcal{L}$. Then we have  the following   Pr\'ekopa-Leindler inequality  \cite{Schneider2014}:

Let $0<t<1$ and  $f, g, h$ be non-negative integrable functions
in $\mathbb{R}^n$ satisfying

$$
h((1-t)x+ty)\geq f(x)^{1-t}g(y)^{t},
$$
for all $x,y\in\mathbb{R}^n$. Then
\begin{eqnarray}\label{Prekopa-Leindler inequality}
\int_{\mathbb{R}^n}h(x)dx \geq \left(\int_{\mathbb{R}^n}f(x)dx\right)^{1-t}\left(\int_{\mathbb{R}^n}g(x)dx\right)^t.
\end{eqnarray}
Equality holds in (\ref{Prekopa-Leindler inequality})
if and only if $f$ and $g$ are log-concave functions, and $f(x)=g(x-x_0)$ for some $x_0\in\mathbb{R}^n$.

\vskip 0.2cm

The following properties will be useful late \cite{Colesanti,Rockafellar}.

\begin{pro}\label{Colesanti's proposition 2.1}
Let $\varphi, \psi\in\mathcal{L}$. Then
\begin{enumerate}

\item  $(\varphi\square \psi)^*=\varphi^*+\psi^*$;

\item $(\varphi t)^*=t \varphi^*$, $t>0$.
\end{enumerate}
\end{pro}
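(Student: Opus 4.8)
The plan is to obtain both identities directly from the definition \eqref{Fenchel conjugate} of the Legendre transform by elementary manipulations of infima and suprema, the only genuine point of care being the bookkeeping for extended-real values.

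For part (1), I would start from
\begin{eqnarray*}
(\varphi\square \psi)^*(z)=\sup_{x\in\mathbb{R}^n}\Bigl\{\langle x,z\rangle-\inf_{y\in\mathbb{R}^n}\{\varphi(x-y)+\psi(y)\}\Bigr\}.
\end{eqnarray*}
Since $-\inf_{y}\{\cdots\}=\sup_{y}\{-\varphi(x-y)-\psi(y)\}$, the right-hand side becomes a supremum over the pair $(x,y)$; the change of variables $u=x-y$, i.e.\ the linear bijection $(x,y)\mapsto(x-y,y)$ of $\mathbb{R}^n\times\mathbb{R}^n$ onto itself, then decouples the two variables and gives
\begin{eqnarray*}
(\varphi\square \psi)^*(z)=\sup_{u,y}\{\langle u,z\rangle+\langle y,z\rangle-\varphi(u)-\psi(y)\}
=\sup_{u}\{\langle u,z\rangle-\varphi(u)\}+\sup_{y}\{\langle y,z\rangle-\psi(y)\},
\end{eqnarray*}
which is exactly $\varphi^*(z)+\psi^*(z)$. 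Splitting the iterated supremum into a sum of two independent suprema is legitimate because $\varphi,\psi\in\mathcal{L}$ are proper, so $\varphi^*,\psi^*>-\infty$ everywhere and no indeterminate form $\infty-\infty$ appears; the resulting sum lies in $(-\infty,+\infty]$ and is always well defined.

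Part (2) is a one-line substitution: from \eqref{Fenchel conjugate} and \eqref{right scalar multiplication},
\begin{eqnarray*}
(\varphi t)^*(z)=\sup_{x\in\mathbb{R}^n}\Bigl\{\langle x,z\rangle-t\varphi\Bigl(\tfrac{x}{t}\Bigr)\Bigr\}
=\sup_{w\in\mathbb{R}^n}\{\langle tw,z\rangle-t\varphi(w)\}
=t\sup_{w\in\mathbb{R}^n}\{\langle w,z\rangle-\varphi(w)\}=t\varphi^*(z),
\end{eqnarray*}
where $w=x/t$ is an admissible change of variables because $t>0$, and $t$ may be pulled out of the supremum since $t>0$.

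There is no real obstacle here: the argument is purely formal, and the only thing one must check is that every function appearing stays proper — so that its Legendre transform is again a proper, convex, lower semi-continuous function, as recalled in the excerpt — and that the rearrangements of $\sup$ and $\inf$ never create an $\infty-\infty$ ambiguity. Both points follow at once from the definition of $\mathcal{L}$, so this is the step I would write out with care but expect to dispatch in a couple of lines.
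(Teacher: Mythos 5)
Your argument is correct. The paper itself gives no proof of this proposition — it simply cites Rockafellar and Colesanti--Fragal\`a — and your computation is exactly the standard one from those sources: rewriting $(\varphi\square\psi)^*$ as a joint supremum, decoupling via the substitution $u=x-y$, and using properness of $\varphi,\psi$ (so neither function takes the value $-\infty$) to split the supremum without any $\infty-\infty$ ambiguity; the scaling identity $(\varphi t)^*=t\varphi^*$ follows from the substitution $w=x/t$ with $t>0$, as you wrote. Nothing further is needed.
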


\begin{theoremalph}\label{Rockafellar's result} \emph{\cite[Theorem 10.9]{Rockafellar}}
Let $C$ be a relatively open convex set, and let $f_1,f_2,\cdots,$ be a sequence of finite convex functions on $C$. Suppose that the real number  sequence $f_1(x),f_2(x),\cdots,$
is bounded for each $x\in C$. It is then possible to select a subsequence of  $f_1,f_2,\cdots,$ which converges uniformly  on  closed  bounded subsets of $C$ to some finite convex function $f$.
\end{theoremalph}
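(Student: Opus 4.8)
The plan is a Blaschke–Arzel\`a–Ascoli type selection argument: upgrade the hypothesis of pointwise boundedness to \emph{local uniform} boundedness by exploiting convexity, deduce local equi-Lipschitz continuity, and then apply the Arzel\`a–Ascoli theorem together with a diagonal extraction over an exhaustion of $C$ by compact sets. Throughout one works inside the affine hull of $C$, so that after an affine identification $C$ may be taken to be an open subset of $\mathbb{R}^m$ with $m=\dim C$, and ``closed bounded subset of $C$'' is read as a compact subset of $\mathbb{R}^m$ contained in $C$ (without this reading the statement fails, e.g.\ $f_k(x)=-(1-\tfrac1k)\log x$ on $C=(0,1)$).

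\emph{Step 1 (pointwise boundedness $\Rightarrow$ local uniform boundedness).} Fix $x_0\in C$ and choose an $m$-simplex $S\subset C$ with vertices $v_0,\dots,v_m$ whose interior contains $x_0$, shrunk so that $\overline{B}(x_0,2r)\subset\operatorname{int}S$ for some $r>0$. Convexity gives $f_k\le\max_i f_k(v_i)$ on $S$, and the right-hand side is bounded in $k$ since each sequence $(f_k(v_i))_k$ is bounded; hence $f_k\le M$ on $\overline{B}(x_0,2r)$ for all $k$. For a uniform lower bound, fix $x\in\overline{B}(x_0,r)$ and use the reflection $x_0=\tfrac12 x+\tfrac12(2x_0-x)$ with $2x_0-x\in\overline{B}(x_0,r)$: convexity yields $f_k(x)\ge 2f_k(x_0)-f_k(2x_0-x)\ge 2\inf_k f_k(x_0)-M$, a bound independent of $k$. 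Thus $M':=\sup_k\sup_{\overline{B}(x_0,r)}|f_k|<\infty$.

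\emph{Step 2 (local equi-Lipschitz continuity).} A convex function bounded by $M'$ in absolute value on $\overline{B}(x_0,r)$ is Lipschitz on $\overline{B}(x_0,r/2)$ with constant at most $4M'/r$ (extend the segment through two given points to the sphere of radius $r$ and apply the definition of convexity); the constant depends only on $M'$ and $r$, hence is uniform in $k$. So $\{f_k\}$ is uniformly bounded and equicontinuous on each such ball.

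\emph{Step 3 (selection and the limit).} Cover $C$ by a countable family of closed balls, each compactly contained in $C$, such that every compact subset of $C$ is covered by finitely many of them (equivalently, exhaust $C$ by compacts $K_1\subset\operatorname{int}K_2\subset K_2\subset\cdots$). By Steps 1–2, $\{f_k\}$ is uniformly bounded and equicontinuous on each $K_j$, so Arzel\`a–Ascoli gives a subsequence converging uniformly on $K_1$; extracting successively on $K_2,K_3,\dots$ and passing to the diagonal subsequence produces $(f_{k_i})$ converging uniformly on every $K_j$, hence uniformly on every compact subset of $C$. The limit $f$ is finite because $\{f_k\}$ is locally bounded, and convex as a pointwise limit of convex functions, which is the asserted conclusion. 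The one genuinely nontrivial point is Step 1, namely converting pointwise boundedness into a uniform \emph{lower} bound on a neighborhood: the simplex handles the upper bound at once, but the lower bound requires the convexity reflection $x_0=\tfrac12 x+\tfrac12(2x_0-x)$; everything else (Arzel\`a–Ascoli, the diagonal argument, stability of convexity under pointwise limits) is soft.
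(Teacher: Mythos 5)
Your argument is correct. Note, however, that the paper does not prove this statement at all: it is quoted verbatim as Theorem 10.9 of Rockafellar's \emph{Convex Analysis} and used as a black box (in the proof of Proposition \ref{existence of the minimization  problem}), so there is no in-paper proof to compare against. Your proof is the standard selection argument and is essentially the one in Rockafellar's book, up to packaging: Rockafellar extracts a subsequence converging pointwise on a countable dense subset of $C$ by a diagonal argument and then invokes his Theorem 10.8 (pointwise convergence on a dense set plus local boundedness upgrades to uniform convergence on closed bounded subsets, with a finite convex limit), whereas you go through local uniform boundedness, local equi-Lipschitz bounds, and Arzel\`a--Ascoli with a diagonal over a compact exhaustion; both routes rest on the same key step, namely the conversion of pointwise bounds into uniform two-sided local bounds via convexity, which you handle correctly with the simplex for the upper bound and the reflection $x_0=\tfrac12 x+\tfrac12(2x_0-x)$ for the lower bound. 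Your preliminary remark on how to read ``closed bounded subsets of $C$'' (as compact subsets of the affine hull contained in $C$) is also the intended reading, and your example $f_k(x)=-(1-\tfrac1k)\log x$ on $(0,1)$ correctly shows the statement would fail for sets merely closed in the relative topology of $C$.
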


\vskip 0.3cm
~
\subsection{Dual curvature measures}
~~\vskip 0.3cm

Let $\mathbb{R}^n$ be  equipped with the usual Euclidean norm $|\cdot|$.  A subset $K$ of $\mathbb{R}^n$ is called a convex body if it is a compact convex set with non-empty interior.  The set of all convex bodies that contain the origin in their  interiors is denoted by $\mathcal{K}_o^n$.  For $x,y\in\mathbb{R}^n$, let $\langle x, y\rangle$ denote  the standard inner product of $x$ and $y$.
For a convex body $K$ in $\mathbb{R}^n$, we write $\partial K$ for the  boundary of $K$.

For $K, L\in\mathcal{K}_o^n$ and real  $t,s>0$, the Minkowski combination, $t K+s L$, is defined by
$$
t K+s L=\{t x+s y:x\in K,y\in L\}.
$$
The support function $h_K (\cdot)=
h(K, \cdot): \mathbb{R}^n \rightarrow \mathbb{R}$ of a convex body $K$ is defined by
$$
h(K,x)=\max\{\langle x,y\rangle: y\in K\},~~~~~~x\in \mathbb{R}^n.
$$
The convex body $K$ is uniquely determined by its support function $
h(K, \cdot)$. It is obvious that for $t
>0$, the support function of the convex body $t K =\{t x: x\in
K\}$ satisfies
\begin{eqnarray*}
h(t K,\cdot)=t h(K,\cdot).
\end{eqnarray*}
For  $K\in \mathcal{K}_o^n$,   its  \emph{radial function} $\rho(K,\cdot):\mathbb{R}^n\setminus\{o\}\rightarrow [0,\infty)$  is defined  by
\begin{eqnarray*}
\rho(K,x)=\max\{t\geq 0:t x\in K\}.
\end{eqnarray*}
The radial function is denoted also by $\rho_K(x)$.

%A set $K$ in $\mathbb{R}^n$ is  \emph{star-shaped}  with respect to a point $x_0\in K$  if the intersection of every line through $x_0$ with $K$ is a line segment.  Let $K\subset\mathbb{R}^n $ be a compact star shaped set with respect to a point  $x_0\in K$, then  its \emph{radial function} $\rho(K,x_0,\cdot):\mathbb{R}^n\setminus\{o\}\rightarrow [0,\infty)$, is defined  by
%\begin{eqnarray*}
%\rho(K,x_0,x)=\max\{\lambda\geq 0:x_0+\lambda x\in K\}.
%\end{eqnarray*}
%If the radial function is continuous with respect to  the variable $x$, then $K$ is called a \emph{star body}. If the point $x_0$ is the origin, the radial function is denoted by $\rho(K,x)$, and also by $\rho_K(x)$.

For a convex body $K$, its \emph{gauge function} $\|\cdot\|_K$ is defined by
\begin{eqnarray}\label{}
\|x\|_K=\min\{t \geq 0:x\in t K\}.
\end{eqnarray}
The \emph{polar body}  $K^{\circ}$  of $K$ is defined  by
$$
K^{\circ}=\left\{x\in\mathbb{R}^n: \langle x, y\rangle\leq 1\quad\text{for all}\quad y\in K  \right\}.
$$
If $K\in \mathcal{K}_o^n$, then
\begin{eqnarray}\label{gauge function}
\|x\|_K=h_{K^{\circ}}(x)=\frac{1}{\rho_K(x)}.
\end{eqnarray}
It is obvious that
\begin{eqnarray}\label{the value of gauge function at the bounary point}
\|x\|_K=1\quad \text{whenever}\quad x\in \partial K.
\end{eqnarray}

Let $K\in\mathcal{K}_o^n$   and $q\in\mathbb{R}$.
The $q$-th \emph{dual curvature measure} $\widetilde{C}_q(K,\cdot)$ is defined by (see, e.g.,  \cite{HuangLYZ})
\begin{eqnarray}\label{q-dual curvature measure}
\widetilde{C}_q(K,\omega)=\frac{1}{n}\int_{\bm{\alpha}_K^*(\omega)}\rho_K^q(u)du,
\end{eqnarray}
for each Borel set $\omega\subset S^{n-1}$. Here   $\bm{\alpha}_K^*(\omega)$ denotes the set of directions $u\in S^{n-1}$ such that the boundary point $\rho_K(u)u$ belongs to $\nu_K^{-1}(\omega)$, where $\nu_K$ is the Gaussian map which defined on the boundary $\partial K$ of $K$, and $\nu_K^{-1}$ is the inverse Gaussian map. Equivalently,  for a bounded Borel  function $g:S^{n-1}\rightarrow \mathbb{R}$,
\begin{eqnarray}\label{the integral formal of q-dual curvature measure}
\int_{S^{n-1}}g(v)d\widetilde{C}_q(K,v)=\frac{1}{n}\int_{S^{n-1}}g(\alpha_K(u))\rho_K^q(u)du,
\end{eqnarray}
where $\alpha_K(u)=\nu_K(\rho_K(u)u)$. Moreover, if $K$ is strictly convex, then
\begin{eqnarray}\label{the integral formal of q-dual curvature measure2}
\int_{S^{n-1}}g(v)d\widetilde{C}_q(K,v)=\frac{1}{n}\int_{\partial K}g(\nu_K(x))h_K(\nu_K(x))|x|^{q-n}d\mathcal{H}^{n-1}(x),
\end{eqnarray}
where $\mathcal{H}^{n-1}$ is the $(n-1)$-dimensional Hausdorff measure. 
We note that the total measure of the $q$-th dual curvature measure  is the $(n-q)$-th dual quermassintegral $\widetilde{W}_{n-q}(K)$, i.e.,
\begin{eqnarray}\label{dual quermassintegral}
\widetilde{W}_{n-q}(K)=\frac{1}{n}\int_{S^{n-1}}\rho_K(u)^qdu.
\end{eqnarray}

It was proved in \cite{HuangLYZ} that the $n$-th dual curvature measure of a convex body is the cone volume measure of this convex body, while the zeroth dual curvature measure of a convex body is   Aleksandrov's integral curvature of the polar body of this body, i.e.,
\begin{eqnarray}\label{n-dual curvature measure}
\widetilde{C}_n(K,\cdot)&=&\frac{1}{n}h_K(\cdot)dS_K(\cdot),\\ \label{0-dual curvature measure}
\widetilde{C}_0(K,\cdot)&=&\mathcal{H}^{n-1}(\bm{\alpha}_{K^{\circ}}(\cdot)).
\end{eqnarray}
The associated Minkowski problems of the cone volume measure and the Aleksandrov's integral curvature measure   are called  \emph{log-Minkowski problem} (see, e.g.,  \cite{blyz,stancu1,stancu2,Zhu1}) and 
\emph{Aleksandrov problem} (see, e.g.,  \cite{Schneider2014}), respectively.

\section{Mixed dual quermassintegrals  of convex functions}\label{Dual quermassintegrals  for convex functions}

In this section, we will define the  dual quermassintegrals  for convex functions that  include the   dual quermassintegrals  for convex bodies. Inspired by works of \cite{HuangLYZ}, we  investigate the  functional  dual  mixed quermassintegrals.

~
\vskip 0.3cm
\subsection{Integral  formula of the functional dual mixed  quermassintegrals}
~
\vskip 0.3cm

Let $q\in\mathbb{R}\setminus\{0\}$, and $\varphi \in \mathcal{L}_o$. The $(n-q)$-th dual quermassintegral is defined as
\begin{eqnarray}\label{functional dual quermassintegrals}
\widetilde{W}_{n-q}(\varphi)=\int_{\mathbb{R}^n}\varphi(x)^{-q}d\gamma_n(x).
\end{eqnarray}
Let $q<n$ and $K\in \mathcal{K}_o^n$. By the polar coordinates of $x$, (\ref{dual quermassintegral}) and (\ref{gauge function}), we have
\begin{eqnarray*}
\widetilde{W}_{n-q}(\|x\|_{K^{\circ}})&=&\int_{\mathbb{R}^n}\|x\|_{K^{\circ}}^{-q}d\gamma_n(x)\\
&=&(2\pi)^{-\frac{n}{2}}\int_{0}^{+\infty}e^{-\frac{r^2}{2}}r^{n-q-1}dr\int_{S^{n-1}}\|u\|_{K^{\circ}}^{-q}du\\
&=&n(2\pi)^{-\frac{n}{2}}2^{\frac{n-q}{2}-1}\Gamma\left(\frac{n-q}{2}\right)\widetilde{W}_{n-q}(K).
\end{eqnarray*}
In this sense, our definition (\ref{functional dual quermassintegrals})  extended  dual quermassintegrals of convex bodies to the functional version.

For real $q\neq0$,  the normalized dual  quermassintegral $\overline{W}_{n-q}(\varphi)$  is defined as
\begin{eqnarray}
\overline{W}_{n-q}(\varphi)=
\left(\int_{\mathbb{R}^n}\varphi(x)^{-q}d\gamma_n(x)\right)^{\frac{1}{q}},
\end{eqnarray}
and,  for $q=0$, by
\begin{eqnarray}
\overline{W}_{n}(\varphi)=
\exp\left(-\int_{\mathbb{R}^n}\log\varphi(x)d\gamma_n(x)\right).
\end{eqnarray}

The  functional   dual  mixed quermassintegrals are  also introduced.
\begin{defi}\label{definion of mixed functional dual  quermassintegrals}
Let $q\in \mathbb{R}\setminus\{0\}$, and let $\varphi,\psi \in \mathcal{L}_o$. The $(n-q)$-th   dual mixed quermassintegral of $\varphi$ and $\psi$ is defined as
$$
\widetilde{W}_{n-q}(\varphi,\psi)=\frac{1}{q}\lim_{t\rightarrow0^{+}}\frac{\widetilde{W}_{n-q}(\varphi\square(\psi t))-\widetilde{W}_{n-q}(\varphi)}{t}.
$$
\end{defi}

%Indeed, the variation formula (\ref{definion of mixed functional dual  quermassintegrals}) also implies the dual curvature measures of convex bodies.
%\begin{remark}
%Let $q>1$ and  $K,L\in \mathcal{K}_o^n$. If $\varphi(x)=\frac{\|x\|_{K^{\circ}}^q}{q}$ and $\psi(x)=\frac{\|x\|_{L^{\circ}}^q}{q}$, then
%$$
%\widetilde{W}_{n-q}(\varphi,\psi)=\frac{1}{q}\lim_{t\rightarrow0^{+}}\frac{\widetilde{W}_{n-q}(\varphi\square(\psi t))-\widetilde{W}_{n-q}(\varphi)}{t}.
%$$
%Here $p=\frac{q}{q-1}$.
%\end{remark}
%\begin{proof}
%If $q>1$, $\varphi(x)=\frac{\|x\|_{K^{\circ}}^q}{q}$ and $\psi(x)=\frac{\|x\|_{L^{\circ}}^q}{q}$, then
%$[\varphi\square(\psi t)](x)=\frac{\|x\|_{(K+_pt\cdot L)^{\circ}}^p}{p}$. Therefore,
%\begin{eqnarray*}
%\widetilde{W}_{n-q}(\varphi,\psi)&=&\frac{1}{q}\lim_{t\rightarrow0^{+}}\frac{\widetilde{W}_{n-q}(\varphi\square(\psi t))-\widetilde{W}_{n-q}(\varphi)}{t}.
%\end{eqnarray*}
%\end{proof}

In particular, when $\varphi=\psi$, we have

\begin{lemma}
Let $q\in \mathbb{R}\setminus\{0\}$, and let $\varphi \in \mathcal{L}_o$. If  $\widetilde{W}_{n-q}(\varphi)$ is finite, then
\begin{eqnarray*}
q\widetilde{W}_{n-q}(\varphi,\varphi)
=(n-q)\widetilde{W}_{n-q}(\varphi)-\int_{\mathbb{R}^n}|x|^2
\varphi(x)^{-q}d\gamma_n(x).
\end{eqnarray*}

\end{lemma}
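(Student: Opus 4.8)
The plan is to make the difference quotient in Definition~\ref{definion of mixed functional dual  quermassintegrals} completely explicit and then differentiate it at $t=0^{+}$. First I would identify $\varphi\square(\varphi t)$: substituting $y=tz$ in \eqref{infimal convolution} and using \eqref{right scalar multiplication},
\[
\varphi\square(\varphi t)(x)=\inf_{z\in\mathbb{R}^{n}}\bigl\{\varphi(x-tz)+t\varphi(z)\bigr\},
\]
and for each $z$ the convexity of $\varphi$ with weights $\tfrac1{1+t}$ and $\tfrac{t}{1+t}$ gives $\varphi(x-tz)+t\varphi(z)\ge(1+t)\varphi\bigl(\tfrac{x}{1+t}\bigr)$, with equality at $z=\tfrac{x}{1+t}$; hence $\varphi\square(\varphi t)(x)=(1+t)\varphi\bigl(\tfrac{x}{1+t}\bigr)=(\varphi(1+t))(x)$. (The same identity also follows from Proposition~\ref{Colesanti's proposition 2.1}, since $(\varphi\square(\varphi t))^{*}=(1+t)\varphi^{*}=(\varphi(1+t))^{*}$.) Inserting this into \eqref{functional dual quermassintegrals} and changing variables $y=x/(1+t)$, so $dx=(1+t)^{n}dy$ and $|x|^{2}=(1+t)^{2}|y|^{2}$, I obtain
\[
\widetilde{W}_{n-q}\bigl(\varphi\square(\varphi t)\bigr)=(1+t)^{n-q}\int_{\mathbb{R}^{n}}\varphi(y)^{-q}(2\pi)^{-n/2}e^{-(1+t)^{2}|y|^{2}/2}\,dy=:F(t).
\]
Then $F(0)=\widetilde{W}_{n-q}(\varphi)<\infty$ by hypothesis, and $F(t)\le(1+t)^{n-q}F(0)<\infty$ for $t\ge0$ because $(1+t)^{2}\ge1$; so $\tfrac1t(F(t)-F(0))$ is well defined, and by Definition~\ref{definion of mixed functional dual  quermassintegrals} it suffices to show $F'(0^{+})=(n-q)\widetilde{W}_{n-q}(\varphi)-\int_{\mathbb{R}^{n}}|x|^{2}\varphi(x)^{-q}\,d\gamma_{n}(x)$, since then $q\widetilde{W}_{n-q}(\varphi,\varphi)=F'(0^{+})$.

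Differentiating $F$ under the integral sign at $t=0$ produces exactly the two terms $(n-q)\int\varphi^{-q}d\gamma_{n}$ and $-\int|y|^{2}\varphi^{-q}d\gamma_{n}$, which is the asserted identity; so the real content of the lemma is the justification of this interchange, and this is the step I expect to be the main obstacle. On $t\in[0,\delta]$ the inequality $(1+t)^{2}\ge1$ dominates the Gaussian factor by $e^{-|y|^{2}/2}$, and the mean value theorem bounds the difference quotient of $t\mapsto(1+t)^{n-q}e^{-(1+t)^{2}|y|^{2}/2}$ by a constant (depending only on $n,q,\delta$) times $(1+|y|^{2})e^{-|y|^{2}/2}$, uniformly in $t\in(0,\delta]$; hence dominated convergence applies as soon as $\int_{\mathbb{R}^{n}}|y|^{2}\varphi(y)^{-q}\,d\gamma_{n}(y)<\infty$. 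When $q>0$ this moment is automatically finite: $\varphi$ is coercive, so $\{\varphi\le1\}$ is bounded and $\varphi^{-q}\le1$ off it, whence $\int|y|^{2}\varphi^{-q}d\gamma_{n}\le\bigl(\sup_{\{\varphi\le1\}}|y|^{2}\bigr)\widetilde{W}_{n-q}(\varphi)+\int|y|^{2}d\gamma_{n}<\infty$.

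When $q<0$ and this moment happens to be $+\infty$, I would instead split $F(t)-F(0)$ into the piece $\bigl((1+t)^{n-q}-1\bigr)\widetilde{W}_{n-q}(\varphi)$, whose difference quotient tends to $(n-q)\widetilde{W}_{n-q}(\varphi)$, and the piece $(1+t)^{n-q}\int\varphi^{-q}(2\pi)^{-n/2}\bigl(e^{-(1+t)^{2}|y|^{2}/2}-e^{-|y|^{2}/2}\bigr)dy$, whose difference quotient has a nonpositive integrand converging pointwise to $-\varphi^{-q}(2\pi)^{-n/2}|y|^{2}e^{-|y|^{2}/2}$; Fatou's lemma then forces this piece to $-\infty$, so that $F'(0^{+})=-\infty$ and both sides of the claimed identity equal $-\infty$. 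In every case one concludes $q\widetilde{W}_{n-q}(\varphi,\varphi)=F'(0^{+})=(n-q)\widetilde{W}_{n-q}(\varphi)-\int_{\mathbb{R}^{n}}|x|^{2}\varphi(x)^{-q}\,d\gamma_{n}(x)$, as desired.
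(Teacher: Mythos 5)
Your proposal is correct and takes essentially the same route as the paper: you identify $\varphi\square(\varphi t)=\varphi(1+t)$ via convexity, change variables so the $t$-dependence sits in the factor $(1+t)^{n-q}e^{-(1+t)^{2}|y|^{2}/2}$, and differentiate the resulting integral at $t=0^{+}$. The only difference is in the limit-interchange bookkeeping: the paper invokes monotone convergence in one line, while you use a mean-value/dominated-convergence bound together with a Fatou argument when $\int_{\mathbb{R}^n}|y|^{2}\varphi(y)^{-q}\,d\gamma_{n}(y)=+\infty$, which is if anything a more careful justification of the same computation.
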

\begin{proof}
From the definitions of infimal convolution (\ref{infimal convolution}) and  right scalar multiplication (\ref{right scalar multiplication}), we have
\begin{eqnarray*}
\varphi\square[\varphi t](x)&=&\inf_{y\in \mathbb{R}^n}\left\{\varphi(x-y)+t\varphi\left(\frac{y}{t}\right)\right\}\\
&\leq& (1+t)\varphi\left(\frac{x}{1+t}\right),
\end{eqnarray*}
and the convexity of $\varphi$ deduces
\begin{eqnarray*}
\frac{1}{1+t}\varphi(x-y)+\frac{t}{1+t}\varphi\left(\frac{y}{t}\right)\geq\varphi\left(\frac{x}{1+t}\right),
\end{eqnarray*}
for any $x,y\in\mathbb{R}^n$ and $t>0$. This means  that  $\varphi\square(\varphi t)=\varphi(1+t)$.  A direct   calculation yields
\begin{eqnarray*}
&&\frac{\widetilde{W}_{n-q}(\varphi\square(\varphi t))-\widetilde{W}_{n-q}(\varphi)}{t}\\
%&=&\int_{\mathbb{R}^n}\frac{[\varphi\square(\varphi t)(x)]^{1-q}-\varphi(x)^{1-q}}{t}d\gamma_n(x)\\
%&=&\int_{\mathbb{R}^n}\frac{[\varphi (1+t)](x)^{1-q}-\varphi(x)^{1-q}}{t}d\gamma_n(x)\\
%&=&\int_{\mathbb{R}^n}\frac{[\varphi (1+t)](x)^{1-q}-\varphi(x)^{1-q}}{t}e^{-\frac{|x|^2}{2}}dx\\
&&\quad\quad=\frac{(2\pi)^{-\frac{n}{2}}}{t}\left[\int_{\mathbb{R}^n}(1+t)^{-q}\varphi \left(\frac{x}{1+t}\right)^{-q}e^{-\frac{|x|^2}{2}}dx-
\int_{\mathbb{R}^n}\varphi(x)^{-q}e^{-\frac{|x|^2}{2}}dx\right]\\
&&\quad\quad=\frac{(2\pi)^{-\frac{n}{2}}}{t}\left[\int_{\mathbb{R}^n}(1+t)^{n-q}\varphi \left(x\right)^{-q}e^{-\frac{(1+t)^2|x|^2}{2}}dx-
\int_{\mathbb{R}^n}\varphi(x)^{-q}e^{-\frac{|x|^2}{2}}dx\right]\\
&&\quad\quad=(2\pi)^{-\frac{n}{2}}\left[\frac{(1+t)^{n-q}-1}{t}\right]\int_{\mathbb{R}^n}\varphi \left(x\right)^{-q}e^{-\frac{(1+t)^2|x|^2}{2}}dx\\
&&\quad\quad\quad+\ (2\pi)^{-\frac{n}{2}}\int_{\mathbb{R}^n}\varphi(x)^{-q}e^{-\frac{|x|^2}{2}}\left[\frac{e^{-\frac{(t^2+2t)|x|^2}{2}}-1}{t}\right]dx.
\end{eqnarray*}
By the monotone convergence theorem, let $t\rightarrow 0^{+}$, we obtain
\begin{eqnarray*}
q\widetilde{W}_{n-q}(\varphi,\varphi)
&=&(n-q)\widetilde{W}_{n-q}(\varphi)-\int_{\mathbb{R}^n}|x|^2
\varphi(x)^{-q}d\gamma_n(x).
\end{eqnarray*}
\end{proof}

Let
\begin{eqnarray*}\label{}
\mathcal{L'}&=&\left\{\varphi\in \mathcal{L} \Big| \quad  \varphi \in \mathcal{C}_{+}^2, \ \lim_{|x|\rightarrow+\infty}\frac{\varphi(x)}{|x|}=+\infty\right\}.
\end{eqnarray*}
Here $\mathcal{C}_{+}^2$ used for the set of functions whose Hessian matrix are positive definite at each point.   Let $\mathcal{L}_o'$ denote the set of the non-negative  convex function $\varphi\in \mathcal{L'}$ and  there exists  constants $a,b>0$ such that $\varphi(x)\leq b|x|^{1+a}$ when $|x|$ is big enough for $x\in\mathbb{R}^n$.

In order to give the integral representation  of the functional dual mixed   quermassintegral  we  need the following  three  lemmas.

\begin{lemma}\label{Colesantis lemma2.5}
\emph{\cite[Lemma 2.5]{Colesanti}} Let $\varphi\in \mathcal{L}$. There exist constants
$a, b \in \mathbb{R}$ with $a > 0$ such that
$$
\varphi(x)>a |x|+b,
$$
for $x\in \mathbb{R}^n$.
\end{lemma}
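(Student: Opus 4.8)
The plan is to use only two features of the class $\mathcal{L}$: that $\varphi$ is proper and convex, and that it is coercive in the sense $\lim_{|x|\to+\infty}\varphi(x)=+\infty$. Convexity lets one propagate a lower bound for $\varphi$ outward from a fixed point along every ray at one and the same linear rate, while coercivity supplies the lower bound to propagate.

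First I would fix $x_0\in\text{dom}(\varphi)$ and set $m:=\varphi(x_0)$. Since a proper convex function always admits a global affine minorant (see, e.g., \cite{Rockafellar}), $\varphi$ is bounded below on the closed ball $\{|x|\le R\}$ for every $R>0$, say by a constant $c_0\in\mathbb{R}$. Using coercivity I would then choose $R>|x_0|$ so large that $\varphi(y)\ge m+1$ whenever $|y|\ge R$. Now take any $x$ with $|x|>R$; we may assume $\varphi(x)<+\infty$, the claimed inequality being trivial otherwise. Because $|x_0|<R<|x|$, the segment $[x_0,x]$ meets the sphere $\{|y|=R\}$, say at $z=(1-\lambda)x_0+\lambda x$ with $\lambda\in(0,1)$. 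Convexity gives $\varphi(z)\le(1-\lambda)m+\lambda\varphi(x)$, and $|z|=R$ gives $\varphi(z)\ge m+1$; together these force $\varphi(x)\ge m+\tfrac1\lambda$. Since $z-x_0=\lambda(x-x_0)$ we have $\lambda=|z-x_0|/|x-x_0|\le(R+|x_0|)/(|x|-|x_0|)$, hence $\tfrac1\lambda\ge(|x|-|x_0|)/(R+|x_0|)$ and therefore
\[
\varphi(x)\ \ge\ \frac{1}{R+|x_0|}\,|x|+\Bigl(m-\frac{|x_0|}{R+|x_0|}\Bigr)\qquad\text{whenever }|x|>R .
\]
Putting $a:=\dfrac{1}{R+|x_0|}>0$ and $b:=\min\Bigl\{\,m-\tfrac{|x_0|}{R+|x_0|},\ c_0-aR\,\Bigr\}-1$, one checks that $\varphi(x)>a|x|+b$ holds for $|x|>R$ by the displayed bound, and for $|x|\le R$ because there $\varphi(x)\ge c_0\ge a|x|+(c_0-aR)>a|x|+b$. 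This is the assertion.

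The one place that needs care is the passage from the qualitative hypothesis (coercivity) to the quantitative linear estimate: what makes the lower bound for $1/\lambda$ uniform in the direction of $x$ is precisely that the crossing radius $R$ is chosen independently of that direction, combined with the monotonicity of chord slopes of $\varphi$ along the ray from $x_0$. Everything else is routine; in particular, properness is used only to rule out $\varphi$ being unbounded below on the ball $\{|x|\le R\}$.
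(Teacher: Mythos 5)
Your proof is correct. Note that the paper does not actually prove this lemma; it is quoted verbatim from Colesanti--Fragal\`a \cite[Lemma 2.5]{Colesanti} and cited without an argument, so there is no internal proof to compare against. Your argument is the standard one: fix a base point $x_0\in\operatorname{dom}(\varphi)$, use coercivity to pick $R>|x_0|$ with $\varphi\geq\varphi(x_0)+1$ outside $B_R$, and then exploit monotonicity of chord slopes of a convex function along rays from $x_0$ — the crossing of the sphere $\{|y|=R\}$ turns the qualitative coercivity into a uniform linear lower bound for $|x|>R$, while the global affine minorant (Rockafellar) controls the bounded ball. The bookkeeping with $a=1/(R+|x_0|)$ and the choice of $b$ via the $\min$ (shifted by $-1$ to force strict inequality) is sound, and the degenerate case $\varphi(x)=+\infty$ is handled. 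One small remark: you quote the existence of a global affine minorant for proper (not necessarily closed) convex functions; this is indeed true, since $\operatorname{cl}\varphi\leq\varphi$ and $\operatorname{cl}\varphi$ is closed proper convex, but it might be worth a word since $\mathcal{L}$ does not assume lower semi-continuity.
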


\begin{lemma}\label{Colesanti's lemma 4.10} \emph{\cite[Lemma 4.10]{Colesanti}}
Let $\varphi,\psi \in \mathcal{L'}$ and, for any $t>0$, set $\varphi_t=\varphi\Box(\psi t)$. Assume that $\psi(0)=0$, then

\begin{enumerate}
\item  $\varphi_{1}(x)\leq\varphi_{t}(x)\leq\varphi_{}(x),\quad \forall x\in \mathbb{R}^n,\ \forall t\in[0,1];$

\item  $\forall x\in\{x\in \mathbb{R}^n:\varphi(x)<+\infty\}$,\quad $\lim_{t\rightarrow 0^{+}}\varphi_t(x)=\varphi(x);$

\item  $\forall E\subset\{x\in \mathbb{R}^n:\varphi(x)<+\infty\}$, \quad $\lim_{t\rightarrow 0^{+}}\nabla\varphi_t(x)=\nabla\varphi(x)$ uniformly on $E$.
\end{enumerate}
\end{lemma}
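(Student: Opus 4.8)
The plan is to treat part~(1), which is purely formal and rests on the Legendre-transform calculus of Proposition~\ref{Colesanti's proposition 2.1}, separately from the analytic parts~(2)--(3), which hinge on controlling the point $y_t(x)$ at which the infimal convolution $\varphi_t(x)=\inf_y\{\varphi(x-y)+(\psi t)(y)\}$ is attained. For~(1), taking $y=0$ and using $(\psi t)(0)=t\psi(0)=0$ immediately gives $\varphi_t(x)\le\varphi(x)$ for all $t>0$, while $\varphi_0=\varphi$ by convention, which is the upper bound on $[0,1]$. For the lower bound I would first establish that $\psi t\,\Box\,\psi(1-t)=\psi$ for $t\in(0,1)$: by Proposition~\ref{Colesanti's proposition 2.1} the left-hand side has Legendre transform $t\psi^*+(1-t)\psi^*=\psi^*$, and, both functions being lower semi-continuous and convex, a further Legendre transform yields the identity. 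Associativity of $\Box$ then gives $\varphi_1=\varphi\,\Box\,\psi=(\varphi\,\Box\,\psi t)\,\Box\,\psi(1-t)=\varphi_t\,\Box\,\psi(1-t)$, and taking $y=0$ here (using $(\psi(1-t))(0)=0$) gives $\varphi_1(x)\le\varphi_t(x)$; the endpoint $t=1$ is trivial.

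For parts~(2)--(3) the key structural fact is that, since $\varphi,\psi\in\mathcal{L}'$ are finite, strictly convex and superlinear at infinity, the map $y\mapsto\varphi(x-y)+(\psi t)(y)$ is strictly convex and coercive; hence the minimiser $y_t(x)$ exists and is unique, and $\varphi_t$ is differentiable with $\nabla\varphi_t(x)=\nabla\varphi(x-y_t(x))=\nabla\psi(y_t(x)/t)$, the last equality being the first-order optimality condition (recall $\nabla(\psi t)(y)=\nabla\psi(y/t)$). Choosing, via Lemma~\ref{Colesantis lemma2.5}, constants $a_1,a_2>0$ and $b_1,b_2$ with $\varphi(z)>a_1|z|+b_1$ and $\psi(z)>a_2|z|+b_2$, the identity $\varphi(x-y_t(x))+(\psi t)(y_t(x))=\varphi_t(x)\le\varphi(x)$, together with $\varphi(x-y_t(x))\ge a_1|y_t(x)|-a_1|x|+b_1$ and $(\psi t)(y_t(x))\ge a_2|y_t(x)|+tb_2$, gives for $t\in(0,1]$
\[
(a_1+a_2)\,|y_t(x)|\le\varphi(x)+a_1|x|-b_1+|b_2| ,
\]
a bound uniform for $x$ in any bounded set.

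Next I would show $y_t(x)\to 0$ as $t\to 0^+$, uniformly on a bounded set $E$. If not, there are $\varepsilon>0$, $t_k\to 0^+$, and $x_k\in E$ with $|y_{t_k}(x_k)|\ge\varepsilon$; passing to a subsequence, $y_{t_k}(x_k)\to\bar y$ with $|\bar y|\ge\varepsilon$, so $|y_{t_k}(x_k)/t_k|\to\infty$ and, by superlinearity of $\psi$,
\[
(\psi t_k)(y_{t_k}(x_k))=|y_{t_k}(x_k)|\cdot\frac{\psi(y_{t_k}(x_k)/t_k)}{|y_{t_k}(x_k)/t_k|}\longrightarrow+\infty ,
\]
while $\varphi(x_k-y_{t_k}(x_k))\ge b_1$ stays bounded below; this contradicts $\varphi_{t_k}(x_k)\le\varphi(x_k)\le\sup_{\overline{E}}\varphi<\infty$. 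Granting this, part~(2) follows from $\varphi(x-y_t(x))+a_2|y_t(x)|+tb_2\le\varphi_t(x)\le\varphi(x)$ upon letting $t\to 0^+$ and using continuity of $\varphi$; and part~(3) follows from $\nabla\varphi_t(x)=\nabla\varphi(x-y_t(x))$ and uniform continuity of $\nabla\varphi$ on bounded sets, the convergence being uniform on bounded $E$.

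The main difficulty is the uniform-in-$x$ control of the minimiser $y_t(x)$: the a priori bound from Lemma~\ref{Colesantis lemma2.5} combined with the superlinearity argument that forces $y_t(x)\to 0$, and the justification of the differentiation formula $\nabla\varphi_t(x)=\nabla\varphi(x-y_t(x))$ for infimal convolutions (standard; see \cite{Rockafellar}). Part~(1), by contrast, is a routine consequence of the Legendre-transform identities.
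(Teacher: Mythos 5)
This lemma is not proved in the paper at all — it is imported verbatim from Colesanti--Fragal\`a \cite[Lemma 4.10]{Colesanti} — so there is no internal proof to compare with; judged on its own, your argument is correct and is essentially the standard (and, as far as the cited source goes, the original) route: part (1) from $(\psi t)(0)=0$ together with the identity $\psi t\,\Box\,\psi(1-t)=\psi$ and associativity of $\Box$, and parts (2)--(3) by controlling the unique minimiser $y_t(x)$ of $y\mapsto\varphi(x-y)+(\psi t)(y)$, bounding it through the linear minorants of Lemma \ref{Colesantis lemma2.5}, forcing $y_t(x)\to0$ by the superlinearity of $\psi$, and transferring the convergence through $\nabla\varphi_t(x)=\nabla\varphi(x-y_t(x))$. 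Two small remarks. First, your Legendre-transform derivation of $\psi t\,\Box\,\psi(1-t)=\psi$ tacitly uses that the infimal convolution equals its biconjugate, i.e.\ that it is lower semi-continuous; this is true here because the infimum is attained, but it is cleaner (and gap-free) to note directly that $t\psi\bigl(\tfrac{x-y}{t}\bigr)+(1-t)\psi\bigl(\tfrac{y}{1-t}\bigr)\geq\psi(x)$ by convexity, with equality at $y=(1-t)x$. Second, your proof of (3) gives uniform convergence only on bounded subsets (with the points $x-y_t(x)$ confined to a fixed compact set), and this is in fact the most that can hold: for $\varphi(x)=|x|^2$ and $\psi(x)=|x|^2+|x|^4$ one checks that $|\nabla\varphi_t(x)-\nabla\varphi(x)|=2|y_t(x)|\to\infty$ as $|x|\to\infty$ for each fixed $t>0$, so the statement as reproduced in the paper (arbitrary $E\subset\{\varphi<+\infty\}$) should be read, as in Colesanti--Fragal\`a, with $E$ bounded (compactly contained in the domain); your restriction is therefore not a defect of the proof but a needed correction of the statement, and it suffices for the way the lemma is used later.
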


\begin{lemma}\label{Colesanti's lemma 4.11}\emph{\cite[Lemma 4.11]{Colesanti}} Let $\varphi,\psi \in \mathcal{L'}$ and, for any $t>0$, set $\varphi_t=\varphi\Box(\psi t)$. Assume that $\psi(0)=0$, then for $x\in {\rm int}(\{x\in \mathbb{R}^n:\varphi(x)<+\infty\})$
$$
\frac{d}{dt}\varphi_t(x)=-\psi^*(\nabla\varphi_t(x)).
$$
\end{lemma}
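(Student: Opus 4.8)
The plan is to represent $\varphi_t(x)$ via the point at which the infimal convolution is attained, and then to differentiate in $t$, using the first-order optimality condition to cancel the terms carrying the $t$-derivative of that point. After the substitution $y=tz$, definition (\ref{infimal convolution}) reads $\varphi_t(x)=\inf_{z\in\mathbb{R}^n}\{\varphi(x-tz)+t\psi(z)\}$. Fix $x\in{\rm int}(\{\varphi<+\infty\})$ and $t>0$; the function $z\mapsto\varphi(x-tz)+t\psi(z)$ is strictly convex (both $\varphi,\psi$ have positive definite Hessians), takes the finite value $\varphi(x)$ at $z=0$ (here one uses $\psi(0)=0$), and is coercive since $\psi$ is superlinear and $\varphi$ is bounded below by Lemma \ref{Colesantis lemma2.5}. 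Hence the infimum is attained at a unique $z_t=z_t(x)$, with $x-tz_t$ and $z_t$ interior to the domains of $\varphi$ and $\psi$; the optimality condition is $\nabla\varphi(x-tz_t)=\nabla\psi(z_t)$, and, exactly as in the proof of Lemma \ref{Colesanti's lemma 4.10}, this common vector equals $\nabla\varphi_t(x)$ (equivalently, combine $\varphi_t^*=\varphi^*+t\psi^*$ from Proposition \ref{Colesanti's proposition 2.1} with the equality case of Fenchel's inequality).

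Next I would show that $t\mapsto z_t$ is $C^1$ on $(0,\infty)$ by applying the implicit function theorem to $G(z,t):=\nabla\psi(z)-\nabla\varphi(x-tz)$, whose differential in $z$, namely $\nabla^2\psi(z)+t\,\nabla^2\varphi(x-tz)$, is a sum of positive definite matrices, hence invertible, for all $t\ge0$. Differentiating $\varphi_t(x)=\varphi(x-tz_t)+t\psi(z_t)$ in $t$ and using the optimality condition to cancel the $\dot z_t$-terms then yields
$$\frac{d}{dt}\varphi_t(x)=-\langle\nabla\varphi(x-tz_t),z_t\rangle+\psi(z_t)=-\langle\nabla\psi(z_t),z_t\rangle+\psi(z_t)=-\psi^*(\nabla\psi(z_t)),$$
where the last step is the identity $\psi^*(\nabla\psi(z))+\psi(z)=\langle z,\nabla\psi(z)\rangle$ recorded in Section \ref{Background}. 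Since $\nabla\psi(z_t)=\nabla\varphi_t(x)$, the right-hand side equals $-\psi^*(\nabla\varphi_t(x))$, which is the assertion.

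The short computation above is not where the difficulty lies; the real content is the structural input used in the first step — existence, uniqueness, interiority and ($C^1$-)regularity of the optimal point $z_t$, together with the identification $\nabla\varphi_t(x)=\nabla\varphi(x-tz_t)=\nabla\psi(z_t)$ — which for functions in $\mathcal{L}'$ is supplied by Lemma \ref{Colesanti's lemma 4.10} and its proof, so that only the implicit-function step must be added here. An alternative that sidesteps the implicit function theorem is to use $\varphi_t=(\varphi^*+t\psi^*)^*$ to write $\varphi_t(x)=\sup_{y}\big(\langle x,y\rangle-\varphi^*(y)-t\psi^*(y)\big)$, a supremum of functions affine in $t$; then $t\mapsto\varphi_t(x)$ is convex, and testing the supremum at the maximizer $y_t=\nabla\varphi_t(x)$ gives $\varphi_s(x)\ge\varphi_t(x)-(s-t)\psi^*(y_t)$ for every $s>0$, which traps $-\psi^*(y_t)$ between the left and right $t$-derivatives of $\varphi_t(x)$; the continuity of $t\mapsto\nabla\varphi_t(x)$ (again from Lemma \ref{Colesanti's lemma 4.10}) then forces these one-sided derivatives to coincide and equal $-\psi^*(\nabla\varphi_t(x))$. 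This envelope (Danskin) mechanism is really the heart of the proof, whichever route one takes.
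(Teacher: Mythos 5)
The paper does not prove this statement at all: it is quoted verbatim as \cite[Lemma 4.11]{Colesanti} (Colesanti--Fragal\`a), so there is no internal proof to compare against. Your argument is correct and essentially reconstructs the standard proof: represent $\varphi_t(x)=\varphi(x-tz_t)+t\psi(z_t)$ through the unique minimizer, cancel the $\dot z_t$-terms via the first-order condition $\nabla\varphi(x-tz_t)=\nabla\psi(z_t)=\nabla\varphi_t(x)$, and finish with the Legendre identity $\psi^*(\nabla\psi(z))+\psi(z)=\langle z,\nabla\psi(z)\rangle$; your second, conjugate-side route ($\varphi_t=(\varphi^*+t\psi^*)^*$, convexity in $t$, sandwiching the one-sided derivatives at the maximizer $y_t=\nabla\varphi_t(x)$) is the same envelope mechanism that the original reference exploits, just run on the dual side. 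Two small points of care. First, the existence, uniqueness and \emph{interiority} of $z_t$ (so that the gradient optimality condition is legitimate) is asserted rather than proved; for $\varphi,\psi\in\mathcal{L}'$ with positive definite Hessians and superlinear growth this is fine, and deferring it to the proof of Lemma \ref{Colesanti's lemma 4.10} is acceptable here since the paper imports both lemmas wholesale, but a fully self-contained write-up should include it. Second, in the alternative route you invoke ``continuity of $t\mapsto\nabla\varphi_t(x)$ from Lemma \ref{Colesanti's lemma 4.10}'', whereas item (3) of that lemma only gives the limit as $t\to0^+$; continuity at a general $t_0>0$ should be obtained either from your implicit-function step or from the semigroup identity $\varphi_t=\varphi_{t_0}\Box\bigl(\psi(t-t_0)\bigr)$ applied with $\varphi_{t_0}$ in place of $\varphi$. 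With these caveats your proof stands, and it supplies a proof the present paper omits.
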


%\begin{lemma}
%Let $\varphi\in\mathcal{L}$. There exist constants $a,b\in(0,+\infty)$ and $c\in\mathbb{R}$ such that
%$$
%\varphi(x)\leq b|x|^{1+a}+c,
%$$
%for any $x
%\in \mathbb{R}^n$.
%\end{lemma}
%\begin{proof}
%Let $\varphi$ be a convex function in $\mathbb{R}^n$ with $\varphi(0)=0$. For $0<r<+\infty$, if $|x|\leq r$ then there exist constant $a>0$ such that $\varphi(x)\leq a|x|$ (for example,  $a=\frac{\varphi(x_0)}{r}$ where $|x_0|=r$ and $\varphi (x_0)=\sup_{|x|=r}\varphi (x)$).

%Let $r>0$ be such that $\varphi(x)-\varphi(0)\leq |x|$ if $|x|\leq r$.
%\begin{eqnarray*}\label{}
%\varphi (x)
%&\leq&\frac{|x|}{r}\left[\varphi \left(\frac{r}{|x|}x\right)-\varphi(0)\right]+\varphi(0)\\
%&=&
%\end{eqnarray*}
%\end{proof}

The next lemma will be used later. 
\begin{lemma}\label{finiteness}
Let $q<0$ and  $\varphi \in \mathcal{L}_o'$. If $0<\int_{\mathbb{R}^n}\varphi^{2-q}(x)d\gamma_n(x)<+\infty$, then
$$
\int_{\mathbb{R}^n}\varphi^*(\nabla \varphi(x))\varphi^{-q}(x)d\gamma_n(x)<+\infty.
$$
\end{lemma}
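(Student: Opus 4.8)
The plan is to bound the integrand $\varphi^*(\nabla\varphi(x))\varphi^{-q}(x)$ pointwise in terms of the integrand $\varphi^{2-q}(x)$ appearing in the hypothesis, up to a controlled error term that is itself $\gamma_n$-integrable. First I would recall the Fenchel equality $\varphi^*(\nabla\varphi(x)) = \langle x,\nabla\varphi(x)\rangle - \varphi(x)$, which holds at every point of differentiability of $\varphi$, i.e. $\gamma_n$-almost everywhere. Thus
\begin{eqnarray*}
\int_{\mathbb{R}^n}\varphi^*(\nabla\varphi(x))\varphi^{-q}(x)\,d\gamma_n(x)
=\int_{\mathbb{R}^n}\langle x,\nabla\varphi(x)\rangle\,\varphi^{-q}(x)\,d\gamma_n(x)
-\int_{\mathbb{R}^n}\varphi^{1-q}(x)\,d\gamma_n(x),
\end{eqnarray*}
and since $q<0$ gives $1-q>0$, the last integral is finite by Jensen/Hölder against the probability measure $\gamma_n$ (it is dominated by $\big(\int\varphi^{2-q}d\gamma_n\big)^{(1-q)/(2-q)}$). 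So the real task is to control $\int \langle x,\nabla\varphi(x)\rangle\,\varphi^{-q}\,d\gamma_n$, and since $\varphi\ge 0$ is convex with $\varphi(0)$ possibly $0$, one cannot assume $\langle x,\nabla\varphi(x)\rangle\ge 0$; but it is bounded below, and its positive part is what needs an integrable majorant.

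Next I would produce a pointwise upper bound on $\langle x,\nabla\varphi(x)\rangle$. Using convexity, $\varphi(2x)\ge \varphi(x)+\langle x,\nabla\varphi(x)\rangle$, hence $\langle x,\nabla\varphi(x)\rangle \le \varphi(2x)-\varphi(x)\le \varphi(2x)$. Therefore
\begin{eqnarray*}
\langle x,\nabla\varphi(x)\rangle\,\varphi^{-q}(x)\le \varphi(2x)\,\varphi^{-q}(x).
\end{eqnarray*}
Now I invoke the growth condition from the definition of $\mathcal{L}_o'$: there are constants $a,b>0$ with $\varphi(y)\le b|y|^{1+a}$ for $|y|$ large, so $\varphi(2x)\le b\,2^{1+a}|x|^{1+a}$ for large $|x|$, while on a bounded set $\varphi(2x)$ is bounded. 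Also $\varphi(x)^{-q}=\varphi(x)^{|q|}$ is controlled the same way. Combining, for $|x|$ large the positive part of the integrand is at most a constant times $|x|^{(1+a)(1+|q|)}\,e^{-|x|^2/2}$ (absorbing the Gaussian density), which is integrable over $\mathbb{R}^n$; on the complementary bounded region everything is bounded. Alternatively — and this is cleaner — I would bound $\varphi(2x)\varphi^{-q}(x)\le \tfrac12\varphi(2x)^{2}+\tfrac12\varphi^{-2q}(x)$ and then relate $\varphi(2x)$ back to $\varphi$ via $\varphi(2x)\le 2\varphi(x) + \text{(lower order)}$? That last inequality is false for general convex $\varphi$ without $\varphi(0)=0$-type normalization, so I would stick with the explicit polynomial growth bound, which is exactly why $\mathcal{L}_o'$ was defined with the $b|x|^{1+a}$ condition.

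Putting the pieces together: the integrand of the target integral equals $\langle x,\nabla\varphi(x)\rangle\varphi^{-q}(x) - \varphi^{1-q}(x)$ a.e.; the second term is integrable as noted; the first term is bounded above by $\varphi(2x)\varphi^{-q}(x)$, which has an integrable Gaussian majorant thanks to the polynomial growth in $\mathcal{L}_o'$, and is bounded below by $-\varphi(x)\cdot\varphi^{-q}(x)=-\varphi^{1-q}(x)$, already shown integrable. Hence the integrand is dominated in absolute value by a $\gamma_n$-integrable function, and $\int_{\mathbb{R}^n}\varphi^*(\nabla\varphi(x))\varphi^{-q}(x)\,d\gamma_n(x)<+\infty$. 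I expect the only delicate point to be justifying the Fenchel equality almost everywhere (handled since a finite convex function is differentiable off a set of Lebesgue — hence $\gamma_n$ — measure zero, and $\varphi\in\mathcal{C}^2_+$ here anyway makes it differentiable everywhere on $\mathrm{dom}(\varphi)$) and making sure the bound $\langle x,\nabla\varphi(x)\rangle\le\varphi(2x)$ together with the growth hypothesis really gives a Gaussian-integrable dominating function — a routine but necessary estimate that I would carry out by splitting $\mathbb{R}^n$ into a large ball and its complement.
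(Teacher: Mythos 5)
Your argument is correct, but it takes a genuinely different route from the paper. The paper also starts from the Fenchel identity $\varphi^*(\nabla\varphi(x))=\langle x,\nabla\varphi(x)\rangle-\varphi(x)$, but then writes $\langle x,\nabla\varphi(x)\rangle\varphi^{-q}=\tfrac{1}{1-q}\langle x,\nabla\varphi^{1-q}(x)\rangle$ and integrates by parts against the Gaussian: the divergence term $\int_{\mathbb{R}^n}\mathrm{div}\bigl(\varphi^{1-q}e^{-|x|^2/2}x\bigr)dx$ vanishes because of the growth bound $\varphi(x)\leq b|x|^{1+a}$ (so $re^{-r^2/2}\int_{\partial(rB)}\varphi^{1-q}d\mathcal{H}^{n-1}\to0$), and the remaining term $\int|x|^2\varphi^{1-q}d\gamma_n$ is controlled by H\"older against the hypothesis $\int\varphi^{2-q}d\gamma_n<\infty$. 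You instead stay at the pointwise level: the convexity inequality $\varphi(2x)\geq\varphi(x)+\langle x,\nabla\varphi(x)\rangle$ plus the same polynomial growth from $\mathcal{L}_o'$ yields a Gaussian-integrable majorant of the positive part directly, with the hypothesis used only for $\int\varphi^{1-q}d\gamma_n$ (which, as your argument implicitly reveals, is itself already guaranteed by the growth condition, so your proof shows the $\int\varphi^{2-q}$ hypothesis is essentially redundant within $\mathcal{L}_o'$). Your route is more elementary — no divergence theorem, no H\"older — and gives only a crude bound, whereas the paper's computation produces an exact identity expressing the integral through $\int|x|^2\varphi^{1-q}d\gamma_n$ and $\int\varphi^{1-q}d\gamma_n$, which matches the structure of the formula for $\widetilde{W}_{n-q}(\varphi,\varphi)$ used elsewhere. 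One small slip worth noting: your lower bound $\langle x,\nabla\varphi(x)\rangle\geq-\varphi(x)$ is not valid pointwise in general; the gradient inequality at the origin gives $\langle x,\nabla\varphi(x)\rangle\geq\varphi(x)-\varphi(0)\geq-\varphi(0)$, and since $\int\varphi^{-q}d\gamma_n<\infty$ (Jensen against $\gamma_n$) this still dominates the negative part — but in any case only the upper bound is needed for the stated conclusion $<+\infty$.
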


\begin{proof}
 By a direct calculation, we have
\begin{eqnarray*}\label{}
&&\int_{\mathbb{R}^n}\varphi^*(\nabla \varphi(x))\varphi^{-q}(x)d\gamma_n(x)\\
&&\quad =\int_{\mathbb{R}^n}(\langle x,\nabla \varphi(x)\rangle- \varphi(x))\varphi^{-q}(x)d\gamma_n(x)\\
&&\quad =\frac{1}{1-q}\int_{\mathbb{R}^n}\langle x,\nabla \varphi^{1-q}(x)\rangle d\gamma_n(x)-\int_{\mathbb{R}^n}\varphi^{1-q}(x)d\gamma_n(x)\\
&&\quad =\frac{(2\pi)^{-\frac{n}{2}}}{1-q}\int_{\mathbb{R}^n}  {\rm div}(\varphi^{1-q}e^{-\frac{|x|^2}{2}}x) dx+\frac{1}{1-q}\int_{\mathbb{R}^n}|x|^2
\varphi(x)^{1-q}d\gamma_n(x)\\
&&\quad\quad\quad -\frac{n+q-1}{1-q}\int_{\mathbb{R}^n}
\varphi(x)^{1-q}d\gamma_n(x).
\end{eqnarray*}
Since there exist constants  $a>0$, $b>0$  such that  $\varphi(x)\leq b |x|^{1+a} $  when $|x|$ is big enough  
 for $x\in \mathbb{R}^n$, hence
\begin{eqnarray*}\label{}
&&\int_{\mathbb{R}^n}  {\rm div}(\varphi^{1-q}e^{-\frac{|x|^2}{2}}x) dx\\
&&\quad\quad=\lim_{r\rightarrow+\infty}\int_{rB}  {\rm div}(\varphi^{1-q}e^{-\frac{|x|^2}{2}}x) dx\\
&&\quad\quad=\lim_{r\rightarrow+\infty}re^{-\frac{r^2}{2}}\int_{\partial(rB)}  \varphi^{1-q} (x)d\mathcal{H}^{n-1}(x)\\
&&\quad\quad=0.
\end{eqnarray*}
The H\"older inequality yields
\begin{eqnarray*}\label{}
\int_{\mathbb{R}^n}|x|^2
\varphi(x)^{1-q}d\gamma_n(x)\leq\left[\int_{\mathbb{R}^n}
\varphi(x)^{2-q}d\gamma_n(x)\right]^{\frac{1-q}{2-q}}\left[\int_{\mathbb{R}^n}
|x|^{2(2-q)}d\gamma_n(x)\right]^{\frac{1}{2-q}}.
\end{eqnarray*}
The finiteness of $\int_{\mathbb{R}^n}\varphi^*(\nabla \varphi(x))\varphi^{-q}(x)d\gamma_n(x)$ follows from the above facts, $\widetilde{W}_{n+1-q}(\varphi)$ is finite and $\int_{\mathbb{R}^n}
|x|^{2(2-q)}d\gamma_n(x)$ is a constant depend on $n$ and $q$.
\end{proof}

A convex function $\varphi$ is an \emph{admissible perturbation} for the convex function  $\psi$ if there exists a constant $c>0$ such that the function
\begin{eqnarray}\label{admissible perturbation}
\varphi^*-c\psi^*
\end{eqnarray}
is  convex.

The following theorem  provides an integral formula of   the  $(n+1-q)$-th functional dual mixed  quermassintegral.

\begin{thm}\label{integral formula}
Let $q\leq0$ and $\varphi,\psi \in \mathcal{L}_o'$ and assume that $\varphi$ is an admissible perturbation for  $\psi$. If
 %$\frac{|\cdot|^2}{2}$ is an admissible perturbation for the convex function $\varphi^*$, 
 $\varphi(0)=\psi(0)=0$ and $\widetilde{W}_{n+2-q}(\varphi)$ is finite, then $\widetilde{W}_{n+1-q}(\varphi,\psi)\in[0,+\infty]$ and
\begin{eqnarray}\label{integral formula1}
\widetilde{W}_{n+1-q}(\varphi,\psi)
&=&\int_{\mathbb{R}^n}\psi^*(\nabla\varphi(y))\varphi(y)^{-q}d\gamma_n(y)\nonumber\\
&=:&\int_{\mathbb{R}^n}\psi^*(x)d\widetilde{C}_q(\varphi,x).
\end{eqnarray}
\end{thm}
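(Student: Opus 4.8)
The plan is to differentiate $\widetilde{W}_{n+1-q}(\varphi\Box(\psi t))$ at $t=0^+$ by passing the derivative inside the Gaussian integral, using the pointwise derivative formula from Lemma~\ref{Colesanti's lemma 4.11}, namely $\frac{d}{dt}\varphi_t(x) = -\psi^*(\nabla\varphi_t(x))$ where $\varphi_t = \varphi\Box(\psi t)$. Writing out the definition,
\begin{eqnarray*}
\widetilde{W}_{n+1-q}(\varphi,\psi) = \frac{1}{q-1}\lim_{t\to 0^+}\frac{1}{t}\int_{\mathbb{R}^n}\big(\varphi_t(x)^{1-q}-\varphi(x)^{1-q}\big)\,d\gamma_n(x),
\end{eqnarray*}
and since $\frac{d}{dt}\varphi_t^{1-q} = (1-q)\varphi_t^{-q}\cdot\frac{d}{dt}\varphi_t = -(1-q)\varphi_t^{-q}\psi^*(\nabla\varphi_t)$, formally interchanging limit and integral gives
\begin{eqnarray*}
\widetilde{W}_{n+1-q}(\varphi,\psi) = \int_{\mathbb{R}^n}\psi^*(\nabla\varphi(x))\varphi(x)^{-q}\,d\gamma_n(x),
\end{eqnarray*}
after using part (3) of Lemma~\ref{Colesanti's lemma 4.10} (convergence $\nabla\varphi_t\to\nabla\varphi$ on compacta, hence $\psi^*(\nabla\varphi_t)\to\psi^*(\nabla\varphi)$ by continuity of $\psi^*$ where finite) and part (2) ($\varphi_t\to\varphi$ pointwise). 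The right-hand side is then relabeled as $\int\psi^*\,d\widetilde{C}_q(\varphi,\cdot)$ by the very definition of the pushforward measure $\widetilde{C}_q(\varphi,\cdot)=(\nabla\varphi)_\sharp(\varphi^{-q}d\gamma_n)$, which is the change-of-variables identity $\int_{\mathbb{R}^n}g(\nabla\varphi(y))\varphi(y)^{-q}d\gamma_n(y) = \int_{\mathbb{R}^n}g(x)\,d\widetilde{C}_q(\varphi,x)$.

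The main work is justifying the interchange of limit and integral, and here is where the hypotheses enter. By part (1) of Lemma~\ref{Colesanti's lemma 4.10}, $\varphi_1\le\varphi_t\le\varphi$ for $t\in[0,1]$, so the integrand is controlled; combined with $q\le 0$ (so $1-q\ge 1$, keeping the powers monotone and nonnegative) this gives a monotonicity/domination structure. I expect to argue that $t\mapsto\varphi_t(x)^{1-q}$ is monotone in $t$ on $[0,1]$ for each fixed $x$ — decreasing toward $\varphi(x)^{1-q}$ as $t\to 0^+$ is the wrong direction, so more carefully: the difference quotient $\frac{1}{t}(\varphi(x)^{1-q}-\varphi_t(x)^{1-q})$ is nonnegative (since $\varphi_t\le\varphi$) and, by convexity of $t\mapsto\varphi_t$ together with the power $1-q\ge1$, should be monotone in $t$, allowing the monotone convergence theorem to apply directly and yielding $\widetilde{W}_{n+1-q}(\varphi,\psi)\in[0,+\infty]$. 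The admissible-perturbation hypothesis ($\varphi^*-c\psi^*$ convex for some $c>0$, i.e.\ $\psi^* \le \frac{1}{c}\varphi^*$ up to the convex correction) is what ensures $\psi^*(\nabla\varphi(x))$ is dominated by a multiple of $\varphi^*(\nabla\varphi(x)) = \langle x,\nabla\varphi(x)\rangle - \varphi(x)$, and then Lemma~\ref{finiteness} (applied with the finiteness of $\widetilde{W}_{n+2-q}(\varphi)$, which is exactly $\int\varphi^{2-q}d\gamma_n<\infty$) shows the limiting integral is finite when $c$-admissibility is available — though the statement allows $+\infty$, so finiteness is not strictly needed, only a clean dominating argument.

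The hard part will be the monotone (or dominated) convergence justification for the difference quotient uniformly down to $t=0$: one must control $\psi^*(\nabla\varphi_t(x))\varphi_t(x)^{-q}$ near the boundary of $\mathrm{dom}(\varphi)$ and for large $|x|$, where $\psi^*$ evaluated at $\nabla\varphi_t$ could blow up. Here the growth condition defining $\mathcal{L}_o'$ (namely $\varphi(x)\le b|x|^{1+a}$ for large $|x|$, plus $\varphi,\psi\in\mathcal{L}'$ so $\varphi/|x|\to\infty$ and Hessians are positive definite) pins down that $\nabla\varphi$ grows polynomially, $|\nabla\varphi(x)|\lesssim |x|^a$, and hence $\psi^*(\nabla\varphi(x))$ grows at worst polynomially in $|x|$; this is absorbed by the Gaussian weight $e^{-|x|^2/2}$. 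The conditions $\varphi(0)=\psi(0)=0$ are needed to invoke Lemmas~\ref{Colesanti's lemma 4.10} and~\ref{Colesanti's lemma 4.11} (which assume $\psi(0)=0$) and to guarantee $\varphi_t\ge 0$ so all powers are well-defined. Putting the pointwise derivative from Lemma~\ref{Colesanti's lemma 4.11}, the monotone convergence theorem, the boundary-term and growth estimates, and the pushforward change-of-variables together completes the proof.
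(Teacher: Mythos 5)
Your proposal takes a genuinely different route from the paper's, and your core idea is sound. The paper does \emph{not} interchange the limit and the integral directly at $t=0$: it uses Lemma~\ref{Colesanti's lemma 4.11} together with dominated convergence to show that $t\mapsto\widetilde{W}_{n+1-q}(\varphi_t)$ is differentiable for $t>0$ with derivative $(q-1)\Phi(t)$, where $\Phi(s)=\int\psi^*\,d\widetilde C_q(\varphi_s,\cdot)$ (Step~2); it then proves $\Phi(s)<+\infty$ for every $s\ge 0$ by combining the admissible-perturbation bound with Lemma~\ref{finiteness} --- which is precisely where the hypothesis $\widetilde W_{n+2-q}(\varphi)<+\infty$ is consumed --- (Step~3); it shows $\Phi$ is right-continuous at $0$ via the change of variables $y=\nabla\varphi_s(x)$ and a $\det(A+B)^{1/n}\ge\det(A)^{1/n}+\det(B)^{1/n}$ estimate (Step~4); and finally it reads off the one-sided derivative at $0$ from $\lim_{t\to0^+}\frac1t\int_0^t\Phi(s)\,ds=\Phi(0)$ (Step~5). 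Your monotone-convergence shortcut, if you commit to it, bypasses Steps~3--4 entirely. The fact you hedge on with ``should be monotone'' is true and not hard: $(\psi t)(y)=t\psi(y/t)$ is jointly convex in $(t,y)$ (perspective function), hence $\varphi(x-y)+t\psi(y/t)$ is jointly convex in $(t,y)$ and its $y$-infimum $\varphi_t(x)$ is convex in $t$; composing with the convex, increasing map $u\mapsto u^{1-q}$ on $[0,\infty)$ (valid since $1-q\ge1$) preserves convexity, so for each $x$ the difference quotient $\frac1t\bigl(\varphi(x)^{1-q}-\varphi_t(x)^{1-q}\bigr)$ is nonnegative and increases monotonically to $(1-q)\varphi(x)^{-q}\psi^*(\nabla\varphi(x))$ as $t\downarrow0$, and MCT gives \eqref{integral formula1} in $[0,+\infty]$ outright. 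What the paper's longer route buys is finiteness of $\Phi(0)$ --- a stronger conclusion than the theorem states --- and that is the only place the admissible-perturbation hypothesis actually does work; in the MCT route it plays no logical role in establishing \eqref{integral formula1}, so your surrounding discussion of polynomial growth of $\nabla\varphi$ and tail control is unnecessary scaffolding. Commit cleanly to the convexity/MCT argument and drop the dominated-convergence fallback.
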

\begin{proof}

The proof is divided as  several  steps.

\vskip 0.3cm
\emph{Step 1: $\widetilde{W}_{n+1-q}(\varphi,\psi)\in[0,+\infty]$.}
\vskip 0.3cm

Let $\varphi,\psi$ be  non-negative convex functions in $\mathbb{R}^n$ and, for any $t>0$, set $\varphi_t=\varphi\Box(\psi t)$. Since $\psi(0)=0$,  then  Lemma \ref{Colesanti's lemma 4.10} (1) yields that the function $\varphi_t$ is pointwise decreasing with respect to $t$.

 Because of Lemma \ref{Colesanti's lemma 4.10} (1) and (2), so that for every $x\in \mathbb{R}^n$ there exists $\bar{\varphi}(x)=\lim_{t\rightarrow 0^+} \varphi_t(x)$ and it holds $\bar{\varphi}(x)\leq \varphi(x)$. By the monotone convergence theorem  and $q\leq 0$, we have
$$
\widetilde{W}_{n+1-q}(\bar{\varphi})=\lim_{t\rightarrow 0^+} \widetilde{W}_{n+1-q}(\varphi_t)\leq \widetilde{W}_{n+1-q}(\varphi).
$$
Let us consider separately the two cases $\widetilde{W}_{n+1-q}(\bar{\varphi})<\widetilde{W}_{n+1-q}(\varphi)$ and $\widetilde{W}_{n+1-q}(\bar{\varphi})= \widetilde{W}_{n+1-q}(\varphi)$.

If $\widetilde{W}_{n+1-q}(\bar{\varphi})< \widetilde{W}_{n+1-q}(\varphi)$, then
\begin{eqnarray*}\label{}
\lim_{t\rightarrow0^{+}}\frac{\widetilde{W}_{n+1-q}(\varphi_t)-\widetilde{W}_{n+1-q}(\varphi)}{t}=-\infty.
\end{eqnarray*}
If $\widetilde{W}_{n+1-q}(\bar{\varphi})= \widetilde{W}_{n+1-q}(\varphi)$, we further distinguish the following two subcases:
\begin{eqnarray*}\label{}
\exists \quad t_0>0\quad \text{such that}\quad \widetilde{W}_{n+1-q}(\varphi_{t_0})= \widetilde{W}_{n+1-q}(\varphi),
\end{eqnarray*}
or
\begin{eqnarray*}\label{}
 \widetilde{W}_{n+1-q}(\varphi_{t})< \widetilde{W}_{n+1-q}(\varphi)\quad \text{for all}\quad t>0.
\end{eqnarray*}
Lemma \ref{Colesanti's lemma 4.10} (1) declares  that $\widetilde{W}_{n+1-q}(\varphi_t)$ is a monotone decreasing function of $t$, so that $\widetilde{W}_{n+1-q}(\varphi_{t})= \widetilde{W}_{n+1-q}(\varphi)$ for every $t\in[0,t_0]$  in the former subcase. This means
\begin{eqnarray*}\label{}
\lim_{t\rightarrow0^{+}}\frac{\widetilde{W}_{n+1-q}(\varphi_t)-\widetilde{W}_{n+1-q}(\varphi)}{t}=0.
\end{eqnarray*}
Now, we consider the latter subcase.  Since $\widetilde{W}_{n+1-q}(\varphi_t)$ is a decreasing convex function of $t$ when $q\leq0$, hence
\begin{eqnarray}\label{3.2}
\lim_{t\rightarrow0^{+}}\frac{\widetilde{W}_{n+1-q}(\varphi_t)-\widetilde{W}_{n+1-q}(\varphi)}{t}\in[-\infty,0].
\end{eqnarray}
Combining the above facts and the definition of functional dual quermassintegrals,  we conclude that $\widetilde{W}_{n+1-q}(\varphi,\psi)\in[0,+\infty]$.

\vskip 0.3cm

\emph{Step 2: For $t>0$, it holds
\begin{eqnarray}\label{step 2-1}
\widetilde{W}_{n+1-q}(\varphi_t)-\widetilde{W}_{n+1-q}(\varphi)=\int_0^t\Phi(s)ds,
\end{eqnarray}
where
\begin{eqnarray}\label{step 2-2}
\Phi(s)=\int_{\mathbb{R}^n}\psi^*(x)d\widetilde{C}_q(\varphi_s,x).
\end{eqnarray}
}
\vskip 0.3cm

%Let $t>0$ be fixed.
 By Lemma \ref{Colesanti's lemma 4.11}, it holds
\begin{eqnarray}\label{step 2-3}
\lim_{h\rightarrow0}\frac{\varphi_{t+h}^{1-q}(x)-\varphi_{t}^{1-q}(x)}{h}
=(q-1)\psi^*(\nabla\varphi_{t}(x))\varphi_{t}^{-q}(x),
\end{eqnarray}
for $x\in \mathbb{R}^n$. From the facts $q\leq0$ and  $\psi(0)=0$,  Lemma \ref{Colesanti's lemma 4.10} (1) and (3), we infer that for every $s\in[0,1]$, the non-negative function $\psi^*(\nabla\varphi_{s}(x))\varphi_{s}^{-q}(x)$ are bounded above by some continuous function independent of $s$. Then, by the pointwise convergence in (\ref{step 2-3}) and dominated convergence theorem, we have
\begin{eqnarray*}
&&\frac{1}{q-1}\lim_{h\rightarrow0}
\frac{\widetilde{W}_{n+1-q}(\varphi_{t+h})-\widetilde{W}_{n+1-q}(\varphi_t)}{h}\\
&&\quad =\frac{1}{q-1}\lim_{h\rightarrow0}
\int_{\mathbb{R}^n}\frac{\varphi_{t+h}^{1-q}(x)-\varphi_{t}^{1-q}(x)}{h}d\gamma_n(x)\\
&&\quad =\int_{\mathbb{R}^n}\psi^*(\nabla\varphi_{t}(x))\varphi_{t}^{-q}(x)d\gamma_n(x).
\end{eqnarray*}
Moreover,
\begin{eqnarray*}\label{}
\widetilde{W}_{n+1-q}(\varphi_t)-\widetilde{W}_{n+1-q}(\varphi)
=\int_0^t\int_{\mathbb{R}^n}\psi^*(x)d\widetilde{C}_q(\varphi_s,x)ds.
\end{eqnarray*}

\vskip 0.3cm

\emph{Step 3: The function $\Phi$ defined in (\ref{step 2-2}) takes finite values at every $s>0$.}
\vskip 0.3cm

Let $s>0$. Since $\varphi(0)=0$, hence $\varphi^*\geq0$ and
\begin{eqnarray*}\label{}
s\Phi(s)&\leq&\int_{\mathbb{R}^n}(\varphi^*(x)+s\psi^*(x))d\widetilde{C}_q(\varphi_s,x)\\
&=&\int_{\mathbb{R}^n}\varphi_s^*(x)d\widetilde{C}_q(\varphi_s,x).
\end{eqnarray*}
 Lemma \ref{finiteness} ensures $\Phi(s)<+\infty$ when $s>0$.

 Let now $s=0$. Since $\psi$ is an admissible perturbation for $\varphi$, by its definition (\ref{admissible perturbation}) and Lemma \ref{Colesantis lemma2.5},  there exist constants $a,c>0$ and $b\in \mathbb{R}$ such that
\begin{eqnarray}\label{bounded}
(\varphi^*-c\psi^*)(y)\geq a|y|+b,
\end{eqnarray}
for $y\in \mathbb{R}^n$. Hence,
%\begin{eqnarray*}\label{}
%\psi^*(y)\leq c_1+c_2\varphi^*(y)+c_3|y|,
%\end{eqnarray*}
%with
%\begin{eqnarray*}\label{}
% c_1=\varphi^*(0)-c^{-1}\psi^*(0),\quad c_2=c^{-1},\quad c_3=c^{-1}|\nabla\varphi^*(0)-c\nabla\psi^*(0)|.
%\end{eqnarray*}
%Then,
\begin{eqnarray*}\label{}
\Phi(0)&=&\int_{\mathbb{R}^n}\psi^*(x)d\widetilde{C}_q(\varphi,x)\\
&\leq& c^{-1}\int_{\mathbb{R}^n}\varphi^*(x)d\widetilde{C}_q(\varphi,x)-\frac{b}{c}\widetilde{W}_{n-q}(\varphi)
-\frac{a}{c}\int_{\mathbb{R}^n}|\nabla\varphi(x)|\varphi^{-q}(x)d\gamma_n(x)\\
&\leq& c^{-1}\int_{\mathbb{R}^n}\varphi^*(x)d\widetilde{C}_q(\varphi,x)-\frac{b}{c}\widetilde{W}_{n-q}(\varphi).
\end{eqnarray*}
The finiteness of $\Phi(0)$  follows from Lemma \ref{finiteness}, and $a>0,c>0$.
%It suffices to prove that $\int_{\mathbb{R}^n}|\nabla\varphi(x)|\varphi^{-q}(x)d\gamma_n(x)$ is finite.
%\begin{eqnarray*}\label{}
%\int_{\mathbb{R}^n}|\nabla\varphi(x)|\varphi^{-q}(x)d\gamma_n(x)=\int_{\mathbb{R}^n}|\nabla\varphi(x)|\varphi^{-q}(x)d\gamma_n(x).
%\end{eqnarray*}

\vskip 0.3cm

\emph{Step 4: The function $\Phi$ defined in (\ref{step 2-2}) is continuous at every $s>0$, and it is continuous from the right at $s=0$.}
\vskip 0.3cm

 Let $y=\nabla\varphi_s(x)$. Then we have 
\begin{eqnarray*}\label{}
\Phi(s)=(2\pi)^{-\frac{n}{2}}\int_{\mathbb{R}^n}\psi^*(y)\varphi_s(\nabla\varphi_s^*(y))e^{-\frac{|\nabla\varphi_s^*(y)|^2}{2}}\det(\nabla^2\varphi_s^*(y))dy.
\end{eqnarray*}
 By the convexity of $\phi=\varphi^*-c\psi^*$  for some $c>0$,  one gets 
\begin{eqnarray*}\label{}
\nabla^2\phi=\nabla^2\varphi^*-c\nabla^2\psi^*, 
\end{eqnarray*}
and it is a symmetric positive semi-definite matrix. This implies that  
\begin{eqnarray*}\label{}
\nabla^2\psi^*=c^{-1}\nabla^2\varphi^*-c^{-1} \nabla^2\phi.
\end{eqnarray*}
According to the fact that 
\begin{eqnarray*}\label{}
\det(A+B)^{1/n}\geq\det(A)^{1/n}+\det(B)^{1/n},
\end{eqnarray*}
for symmetric positive semi-definite matrixes $A$ and $B$, we infer that 
\begin{eqnarray}\label{1-0}
\det(\nabla^2\varphi_s^*(y))&=&\det(\nabla^2\varphi^*(y)+s\nabla^2\psi^*(y))\nonumber \\
&=&\det((1+c^{-1}s)\nabla^2\varphi^*(y)-c^{-1}s \nabla^2\phi)\nonumber \\
&\leq&(1+c^{-1}s)^n\det(\nabla^2\varphi^*(y)).
\end{eqnarray}
Obviously, 
\begin{eqnarray*}\label{}
|\nabla \varphi_{s}^*(y)|^2=\big| \nabla \varphi_{}^*(y)+s \nabla \psi_{}^*(y)\big|^2
\end{eqnarray*}
is a continuous function with respect to $s$. Hence, 
$$\lim_{s\to 0}|\nabla \varphi_{s}^*(y)|^2=\lim_{s\to 0}|\nabla \varphi_{}^*(y)|^2, $$
i.e.,  for any $0<\varepsilon<1$ there exists an $s_0>0$ small enough such that  
\begin{eqnarray}\label{2-0}-\varepsilon<|\nabla \varphi_{s}^*(y)|^2-|\nabla \varphi_{}^*(y)|^2<\varepsilon
\end{eqnarray} 
when $0<s<s_0$. 
On the other hand,  by the convexity of $\phi=\varphi^*-c\psi^*$  for some $c>0$ and $\varphi(0)=\psi(0)=0$, we have
\begin{eqnarray}\label{3}
 \varphi_s(\nabla\varphi_s^*(y)) &=&\langle y,\nabla\varphi_s^*(y)\rangle-\varphi_s^*(y)\nonumber\\
 &=&(1+\tfrac{s}{c})\Big(\langle y,\nabla\varphi^*(y)\rangle-\varphi^*(y)\Big)-\tfrac{s}{c}\Big(\langle y, \nabla\phi(y)\rangle-\phi(y)\Big)\nonumber\\
&=&(1+\tfrac{s}{c})\varphi(\nabla\varphi^*(y))-\tfrac{s}{c} \phi^*( \nabla\phi(y) ) \nonumber\\
&\leq&(1+\tfrac{1}{c})\varphi(\nabla\varphi^*(y)).
\end{eqnarray}

 From \eqref{1-0}, \eqref{2-0}, \eqref{3}  and \eqref{bounded} for $s\in(0,\min\{s_0,1\})$  we have
\begin{eqnarray*}\label{}
&&\psi^*(y)\varphi_s(\nabla\varphi_s^*(y))^{-q}\det(\nabla^2\varphi_s^*(y))e^{-\frac{|\nabla\varphi_s^*(y)|^2}{2}}\\
 &&\quad  \leq(1+c^{-1})^{n-q}e\psi^*(y)\varphi(\nabla\varphi^*(y))^{-q}\det(\nabla^2\varphi^*(y))e^{-\frac{|\nabla\varphi^*(y)|^2}{2}}\\
&&\quad  \leq(1+c^{-1})^{n-q}c^{-1}e\psi^*(y)\varphi(\nabla\varphi^*(y))^{-q}\det(\nabla^2\varphi^*(y))e^{-\frac{|\nabla\varphi^*(y)|^2}{2}} \\
&&\quad \quad-\frac{b}{c} \varphi(\nabla\varphi^*(y))^{-q}\det(\nabla^2\varphi^*(y))e^{-\frac{|\nabla\varphi^*(y)|^2}{2}}.
\end{eqnarray*}
Let $h_1(y)=(2\pi)^{-\frac{n}{2}}(1+c^{-1})^nc^{-1}e\varphi^*(y)\varphi(\nabla\varphi^*(y))^{-q}\det(\nabla^2\varphi^*(y))e^{-\frac{|\nabla\varphi^*(y)|^2}{2}}$ and $h_2(y)=\max\{0,-\frac{b}{c} \}(2\pi)^{-\frac{n}{2}}\varphi(\nabla\varphi^*(y))^{-q}\det(\nabla^2\varphi^*(y))e^{-\frac{|\nabla\varphi^*(y)|^2}{2}}$. Then
\begin{eqnarray*}
\int_{\mathbb{R}^n}h_1(y)dy&=&(2\pi)^{-\frac{n}{2}}(1+c^{-1})^nc^{-1}e\int_{\mathbb{R}^n}\varphi^*(y)\varphi(\nabla\varphi^*(y))^{-q}\det(\nabla^2\varphi^*(y))e^{-\frac{|\nabla\varphi^*(y)|^2}{2}}dy\\
&=&(1+c^{-1})^nc^{-1}\int_{\mathbb{R}^n}\varphi^*(\nabla\varphi(x))\varphi(x)^{-q}d\gamma_n(x),
\end{eqnarray*}
and 
\begin{eqnarray*}
\int_{\mathbb{R}^n}h_2(y)dy=\max\{0,-\tfrac{b}{c} \}\widetilde{W}_{n+1-q}(\varphi).
\end{eqnarray*}
From Lemma \ref{finiteness}, we obtain  the integrability of the non-negative  function $h_1+h_2$
with respect to the Gaussian probability measure $\gamma_n$. Therefore,  the desired claim follows from dominated convergence theorem.

\vskip 0.3cm

\emph{Step 5: Equality (\ref{integral formula1}) holds.}
\vskip 0.3cm

Combining with (\ref{step 2-1}), the finiteness (which proved in Step 3) and continuity  (which proved in Step 4) of $\Phi(s)$ for $s>0$,  we have
\begin{eqnarray}\label{step5-1}
\Phi(s)=\frac{d}{dt}\widetilde{W}_{n+1-q}(\varphi_t)\Big|_{t=s}.
\end{eqnarray}
The continuity from the right of $\Phi$ at $s=0$  implies
\begin{eqnarray}\label{step5-2}
\lim_{s\rightarrow0^{+}}\Phi(s)=\Phi(0)=\int_{\mathbb{R}^n}\psi^*(x)d\widetilde{C}_q(\varphi,x).
\end{eqnarray}
Then, equality (\ref{integral formula1}) follows from (\ref{step 2-1}), (\ref{step5-1}) and (\ref{step5-2}).
\end{proof}

%For the integral formula (\ref{integral formula1}),   from   $q\leq0$,    the dominated  convergence theorem,   Lemma \ref{Colesanti's lemma 4.10} (2) and (3), and  Lemma  \ref{Colesanti's lemma 4.11}, we have
%\begin{eqnarray*}
%\widetilde{W}_{n+1-q}(\varphi,\psi)
%&=&\frac{1}{q-1}\lim_{t\rightarrow0}\frac{\widetilde{W}_{n+1-q}(\varphi\square(\psi t))-\widetilde{W}_{n+1-q}(\varphi)}{t}\\
%&=&\frac{1}{q-1}\lim_{t\rightarrow0}\int_{\mathbb{R}^n}\frac{[\varphi\square(\psi t)(x)]^{1-q}-\varphi(x)^{1-q}}{t}d\gamma_n(x)\\
%&=&\frac{1}{q-1}\int_{\mathbb{R}^n}\lim_{t\rightarrow0}\frac{[\varphi\square(\psi t)(x)]^{1-q}-\varphi(x)^{1-q}}{t}d\gamma_n(x)\\
%&=&\int_{\mathbb{R}^n}\psi^*(\nabla\varphi(x))\varphi(x)^{-q}d\gamma_n(x)\\
%&=&\int_{\mathbb{R}^n}\psi^*(x)d\widetilde{C}_q(\varphi,x).
%\end{eqnarray*}

\vskip 0.3cm
~
\subsection{Minkowski version inequality}
~
\vskip 0.3cm

There is a Minkowski inequality for the  functional  dual mixed  quermassintegral.

\begin{lemma}
Let $\varphi,\psi \in \mathcal{L}$.   For any  $t\in (0,1)$ and $x,y\in \mathbb{R}^n$, then
\begin{eqnarray}\label{inequality for inf1}
&&[\varphi(1-t)]\square[\psi t]((1-t)x+ty) \leq(1-t)\varphi(x)+t\psi(y),
\end{eqnarray}
with equality if and only if $\varphi(x)=\psi(x-x_0)$ for some $x_0\in \mathbb{R}^n$.
\end{lemma}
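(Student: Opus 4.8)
The plan is to obtain the inequality straight from the definitions and then to concentrate effort on the equality case, which is the only nontrivial point.

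First I would unwind the left-hand side of (\ref{inequality for inf1}). By the definition of the right scalar multiplication, $(\varphi(1-t))(z)=(1-t)\varphi\bigl(z/(1-t)\bigr)$ and $(\psi t)(z)=t\psi(z/t)$, so that
\[
[\varphi(1-t)]\square[\psi t](w)=\inf_{z\in\mathbb{R}^n}\Bigl\{(1-t)\varphi\Bigl(\tfrac{w-z}{1-t}\Bigr)+t\,\psi\Bigl(\tfrac{z}{t}\Bigr)\Bigr\}.
\]
Evaluating this at $w=(1-t)x+ty$ and testing the infimum with the choice $z=ty$ gives $\tfrac{w-z}{1-t}=x$ and $\tfrac{z}{t}=y$, hence $[\varphi(1-t)]\square[\psi t]((1-t)x+ty)\le(1-t)\varphi(x)+t\psi(y)$, which is (\ref{inequality for inf1}).

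For the equality case I would first record that, after the substitution $a=\tfrac{w-z}{1-t}$, $b=\tfrac{z}{t}$, the display above says
\[
[\varphi(1-t)]\square[\psi t]((1-t)x+ty)=\inf\{(1-t)\varphi(a)+t\psi(b):(1-t)a+tb=(1-t)x+ty\},
\]
so equality in (\ref{inequality for inf1}) is precisely the statement that $(a,b)=(x,y)$ is a minimizer of this constrained convex program; writing its first-order optimality condition, this is equivalent to $\partial\varphi(x)\cap\partial\psi(y)\neq\emptyset$, i.e. $\nabla\varphi(x)=\nabla\psi(y)$ in the differentiable case. From here the ``if'' direction is immediate: if $\varphi(\cdot)=\psi(\cdot-x_0)$ then $\partial\varphi(x)=\partial\psi(x-x_0)$, so equality holds for every pair with $y=x-x_0$. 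For the ``only if'' direction I would use that (\ref{inequality for inf1}) is exactly the hypothesis of the Pr\'ekopa-Leindler inequality (\ref{Prekopa-Leindler inequality}) for the log-concave triple $h=e^{-[\varphi(1-t)\square\psi t]}$, $f=e^{-\varphi}$, $g=e^{-\psi}$, whose equality case forces $f(x)=g(x-x_0)$, that is $\varphi(x)=\psi(x-x_0)$ for some $x_0\in\mathbb{R}^n$.

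The inequality itself is routine. The step I expect to be the real obstacle is the equality analysis — in particular, making precise the sense in which ``equality holds if and only if $\varphi$ is a translate of $\psi$'', i.e. matching the pointwise optimality criterion $\partial\varphi(x)\cap\partial\psi(y)\neq\emptyset$ with the global translate statement, which is exactly what the equality case of Pr\'ekopa-Leindler (equivalently, a monotone-transport argument forcing $\nabla\varphi$ and $\nabla\psi$ to agree up to a translation of the domain) is designed to handle.
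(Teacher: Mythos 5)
Your derivation of the inequality itself matches the paper's (test the infimum with $z=ty$), and your ``if'' direction is fine. The genuine problem is the ``only if'' step. You write that (\ref{inequality for inf1}) ``is exactly the hypothesis of the Pr\'ekopa-Leindler inequality \dots whose equality case forces $f(x)=g(x-x_0)$''. That inference is not valid as stated: the hypothesis of Pr\'ekopa-Leindler is satisfied for \emph{every} pair $\varphi,\psi\in\mathcal{L}$, because (\ref{inequality for inf1}) always holds; being in the hypothesis only yields the inequality $\int h\,dx\geq\bigl(\int f\,dx\bigr)^{1-t}\bigl(\int g\,dx\bigr)^{t}$ and tells you nothing about its equality case. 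To invoke the equality condition of (\ref{Prekopa-Leindler inequality}) you must first prove that equality holds in that \emph{integrated} inequality, and pointwise equality in (\ref{inequality for inf1}) does not hand you this by itself.

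The missing bridge is exactly the paper's short H\"older argument: reading the equality hypothesis as an identity (in particular at pairs with $y=x$), one gets $e^{-[\varphi(1-t)]\square[\psi t](x)}=\bigl(e^{-\varphi(x)}\bigr)^{1-t}\bigl(e^{-\psi(x)}\bigr)^{t}$ for all $x$; integrating and applying H\"older's inequality gives $\int h\,dx\leq\bigl(\int f\,dx\bigr)^{1-t}\bigl(\int g\,dx\bigr)^{t}$, which together with Pr\'ekopa-Leindler forces equality in (\ref{Prekopa-Leindler inequality}), and only then does its equality case yield $\varphi(x)=\psi(x-x_0)$. (Integrability is not an issue: $\varphi,\psi\in\mathcal{L}$ dominate a coercive affine function by Lemma \ref{Colesantis lemma2.5}, so $\int e^{-\varphi}\,dx$ and $\int e^{-\psi}\,dx$ are finite and positive.) A related caution: your first-order characterization $\partial\varphi(x)\cap\partial\psi(y)\neq\emptyset$ is correct but cannot carry the ``only if'' direction, since equality at a single pair $(x,y)$ is possible for functions that are in no way translates of each other; the equality statement has to be read as holding identically, which is precisely what the H\"older step above exploits.
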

\begin{proof}
For any $x,y\in \mathbb{R}^n$ and $t\in (0,1)$, we have
\begin{eqnarray*}\label{}
&&[\varphi(1-t)]\square[\psi t]((1-t)x+ty) \nonumber  \\
&& \quad =\inf_{z\in\mathbb{R}^n}\left\{(1-t)\varphi\left(\frac{(1-t)x+ty-z}{1-t}\right)+t\psi\left(\frac{z}{t}\right)\right\} \nonumber\\
&&\quad\leq(1-t)\varphi(x)+t\psi(y),
\end{eqnarray*}
where in the last inequality we have used  $z=ty$. This is the desired inequality.

The inequality (\ref{inequality for inf1}) is equivalent to
\begin{eqnarray*}\label{}
e^{-[\varphi(1-t)]\square[\psi t]((1-t)x+ty)}  \geq\left(e^{-\varphi(x)}\right)^{1-t}\left(e^{-\psi(y)}\right)^t.
\end{eqnarray*}
Then, the Pr\'ekopa-Leindler inequality (\ref{Prekopa-Leindler inequality}) deduces
\begin{eqnarray}\label{inequality for inf3}
\int_{\mathbb{R}^n}e^{-[\varphi(1-t)]\square[\psi t](x)} dx \geq\left(\int_{\mathbb{R}^n}e^{-\varphi(x)}dx\right)^{1-t}\left(\int_{\mathbb{R}^n}e^{-\psi(y)}dx\right)^t.
\end{eqnarray}
If there is an equality  in (\ref{inequality for inf1}), then H\"older inequality yields
\begin{eqnarray*}
\int_{\mathbb{R}^n}e^{-[\varphi(1-t)]\square[\psi t](x)} dx &=&\int_{\mathbb{R}^n}\left(e^{-\varphi(x)}\right)^{1-t}\left(e^{-\psi(x)}\right)^{t}dx. \\
&\leq&\left(\int_{\mathbb{R}^n}e^{-\varphi(x)}dx\right)^{1-t}\left(\int_{\mathbb{R}^n}e^{-\psi(y)}dx\right)^t.
\end{eqnarray*}
This means that  there is an equality in  the Pr\'ekopa-Leindler inequality  (\ref{inequality for inf3}). The equality condition of Pr\'ekopa-Leindler inequality guarantees that  convex functions $\varphi$ and $\psi$ agree up to a translation. Conversely, it is not hard to check that there is an equality in (\ref{inequality for inf1}) if $\varphi(x)=\psi(x-x_0)$ for some $x_0\in \mathbb{R}^n$.
\end{proof}

We are now in the position to obtain the following Brunn-Minkowski inequality for the functional dual quermassintegrals. 

\begin{thm}\label{functional B-M ineq}
Let $\varphi,\psi \in \mathcal{L}_o$. If  $\overline{W}_{n-q}(\varphi)$ and $\overline{W}_{n-q}(\psi )$ are finite and $q\leq-1$,   for any $t\in (0,1)$,  then
\begin{eqnarray}\label{B-Minkowski inequality}
\overline{W}_{n-q}([\varphi(1-t)]\square[\psi t])\leq (1-t)\overline{W}_{n-q}(\varphi)+ t\overline{W}_{n-q}(\psi )
\end{eqnarray}
with equality  if and only if $\varphi(x)=\psi(x-x_0)$ with $x_0\in \mathbb{R}^n$ when $q=-1$ and $\varphi=\psi$ when $q<-1$.

%\item If $q=0$, then
%\begin{eqnarray}\label{B-Minkowski inequality}
%\overline{W}_{n}([\varphi(1-t)]\square[\psi t])\leq (1-t)\overline{W}_{n}(\varphi)+ t\overline{W}_{n}(\psi )
%\end{eqnarray}
%with equality  if and only if $\varphi=\psi$.
%
%\item If $q>0$, then
%\begin{eqnarray}\label{B-Minkowski inequality}
%\overline{W}_{n-q}([\varphi(1-t)]\square[\psi t])\geq (1-t)\overline{W}_{n-q}(\varphi)+ t\overline{W}_{n-q}(\psi )
%\end{eqnarray}
%with equality  if and only if $\varphi=\psi$.
%
%
%%with equality  if and only if $\varphi(x)=\psi(x-x_0)$ with $x_0\in \mathbb{R}^n$ when $q=0$ and $\varphi(x)=\psi(x)$ when $q<0$.
%\end{enumerate}
\end{thm}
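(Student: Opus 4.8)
The plan is to derive \eqref{B-Minkowski inequality} directly from the pointwise inequality \eqref{inequality for inf1} combined with the Prékopa--Leindler inequality \eqref{Prekopa-Leindler inequality}, in close analogy with the proof of the functional Brunn--Minkowski inequality of Colesanti--Fragalà. First I would fix $t\in(0,1)$ and set $\varphi_t := [\varphi(1-t)]\square[\psi t]$. Writing $q\le -1$, put $p := -q \ge 1$, so $\widetilde W_{n-q}(\eta) = \int_{\mathbb R^n}\eta^{p}\,d\gamma_n$ and $\overline W_{n-q}(\eta) = \big(\int_{\mathbb R^n}\eta^{p}\,d\gamma_n\big)^{1/p}$, a genuine $L^p(\gamma_n)$-norm of the nonnegative function $\eta$. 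The inequality \eqref{inequality for inf1} says precisely that $\varphi_t((1-t)x+ty)\le (1-t)\varphi(x)+t\psi(y)$ for all $x,y$; exponentiating gives $e^{-\varphi_t((1-t)x+ty)}\ge \big(e^{-\varphi(x)}\big)^{1-t}\big(e^{-\psi(y)}\big)^{t}$, which is the hypothesis of Prékopa--Leindler applied to the triple $(e^{-\varphi}, e^{-\psi}, e^{-\varphi_t})$.

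The key step is to insert the Gaussian weight and the power $p$ correctly so that Prékopa--Leindler can be applied to obtain the $L^p(\gamma_n)$-norm inequality rather than merely an $L^1$ statement. I would apply \eqref{Prekopa-Leindler inequality} not to $e^{-\varphi}$ directly but to the functions $F(x) := \varphi(x)^{p} e^{-|x|^2/2}$, $G(y) := \psi(y)^{p} e^{-|y|^2/2}$ and $H(z) := \varphi_t(z)^{p} e^{-|z|^2/2}$. For this I must check the defining inequality $H((1-t)x+ty)\ge F(x)^{1-t}G(y)^{t}$. The Gaussian part is exact by the parallelogram-type identity $|(1-t)x+ty|^2 \le (1-t)|x|^2 + t|y|^2$ (convexity of $|\cdot|^2$), so $e^{-|(1-t)x+ty|^2/2}\ge \big(e^{-|x|^2/2}\big)^{1-t}\big(e^{-|y|^2/2}\big)^{t}$. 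The remaining factor requires $\varphi_t((1-t)x+ty)^{p} \ge \varphi(x)^{p(1-t)}\psi(y)^{pt}$; but here the inequality from \eqref{inequality for inf1} goes the \emph{wrong way} for a direct power manipulation, so instead I would use that, since $\varphi_t((1-t)x+ty) \le (1-t)\varphi(x)+t\psi(y)$ and the right side is a weighted arithmetic mean, while we want a lower bound $\ge$ the weighted geometric mean. Here is where $q\le-1$, i.e.\ $p\ge1$, is essential: I switch to working with $-\log$. Actually the cleaner route is to apply Prékopa--Leindler to the \emph{log-concave} functions $e^{-\varphi}$, $e^{-\psi}$ with the Gaussian density folded in, obtaining a Brunn--Minkowski inequality for $\widetilde W$ at exponent governed by $1-q$, then pass from $\widetilde W_{n-q}$ to $\overline W_{n-q}$ by the normalization, and finally invoke the reverse-Hölder / convexity of $p\mapsto \|\cdot\|_p$ to reach exponent $1$ on the right-hand side of \eqref{B-Minkowski inequality}. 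I expect the bookkeeping of which exponent the Gaussian-weighted Prékopa--Leindler produces, and reconciling it with the normalized $\overline W_{n-q}$, to be the main technical obstacle.

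Concretely, the chain I would write is: (i) from \eqref{inequality for inf1} and convexity of $|\cdot|^2$, deduce the Prékopa--Leindler hypothesis for $(e^{-\varphi-|x|^2/2},\, e^{-\psi-|x|^2/2},\, e^{-\varphi_t-|x|^2/2})$ with parameter $t$; this already handles the borderline exponent. (ii) For the general $q\le -1$, raise to the power $p=-q\ge1$: since $s\mapsto s^{p}$ is convex and increasing on $[0,\infty)$ and by \eqref{inequality for inf1} the value $\varphi_t((1-t)x+ty)$ is controlled by a convex combination, one gets $\varphi_t((1-t)x+ty)^{p}\le \big((1-t)\varphi(x)+t\psi(y)\big)^{p}$; this is \emph{not} directly what Prékopa--Leindler wants, so I instead keep the functions $e^{-\varphi}$ etc.\ and apply Prékopa--Leindler against the measure $d\gamma_n$ viewed through the substitution that turns $\int \eta^{p}\,d\gamma_n$ into an integral of a log-concave density — i.e.\ use that $\eta^{p}e^{-|x|^2/2}$ is not log-concave in general, so one must instead argue: (ii$'$) apply Prékopa--Leindler to get $\int e^{-\varphi_t}\,d\gamma_n \ge \big(\int e^{-\varphi}d\gamma_n\big)^{1-t}\big(\int e^{-\psi}d\gamma_n\big)^{t}$ only as a \emph{model case}, and for the actual claim use the homogeneity trick of Colesanti--Fragalà: replace $\varphi,\psi$ by $\overline W_{n-q}(\varphi)^{-1}\varphi$ and $\overline W_{n-q}(\psi)^{-1}\psi$ after a suitable reparametrization of $t$, reducing to the case $\overline W_{n-q}(\varphi)=\overline W_{n-q}(\psi)$, where \eqref{B-Minkowski inequality} becomes $\overline W_{n-q}(\varphi_t)\le \overline W_{n-q}(\varphi)$, i.e.\ exactly the Gaussian Prékopa--Leindler statement from step (i). (iii) Unwind the reparametrization to recover the general $t$, giving \eqref{B-Minkowski inequality}.

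For the equality case: equality in \eqref{B-Minkowski inequality} forces equality in the Gaussian Prékopa--Leindler step, hence (by the equality condition recorded after \eqref{Prekopa-Leindler inequality}) $e^{-\varphi}$ and $e^{-\psi}$ are log-concave with $\varphi(x)=\psi(x-x_0)$ for some $x_0$, together with equality in the convexity $|(1-t)x+ty|^2\le(1-t)|x|^2+t|y|^2$, which forces $x_0=0$ once $q<-1$ because then the normalization step is strict unless $\varphi=\psi$; when $q=-1$ the Gaussian weight enters only linearly and the translation $x_0$ survives, matching the stated dichotomy. Conversely one checks equality holds under these configurations using that \eqref{inequality for inf1} is an equality there. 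The delicate point in the equality analysis is tracking how the translation $x_0$ interacts with the Gaussian measure: the Gaussian is not translation invariant, so a genuine translation degrades $\overline W_{n-q}$ strictly when the power $p$ exceeds $1$, which is exactly why $x_0$ must vanish for $q<-1$; I would make this precise via strict convexity of $s\mapsto s^{p}$ for $p>1$.
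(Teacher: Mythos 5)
There is a genuine gap, and it sits at the core of your plan: the Pr\'ekopa--Leindler inequality \eqref{Prekopa-Leindler inequality} controls the functional $\eta\mapsto\int_{\mathbb{R}^n}e^{-\eta}\,d\gamma_n$, whereas the quantity in \eqref{B-Minkowski inequality} is $\overline{W}_{n-q}(\eta)=\bigl(\int_{\mathbb{R}^n}\eta^{p}\,d\gamma_n\bigr)^{1/p}$ with $p=-q\ge 1$. These two functionals are not interchangeable --- they even have opposite monotonicity in $\eta$ (the first decreases, the second increases when $\eta$ grows pointwise) --- so the conclusion $\int e^{-\varphi_t}\,d\gamma_n\ge\bigl(\int e^{-\varphi}d\gamma_n\bigr)^{1-t}\bigl(\int e^{-\psi}d\gamma_n\bigr)^{t}$ from your step (i) gives no information about $\|\varphi_t\|_{L^p(\gamma_n)}$. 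Your own step (ii) notices that the hypothesis of Pr\'ekopa--Leindler points the wrong way for the powered functions, but the proposed repair in (ii$'$)--(iii) does not close the gap: after the normalization the claim $\overline{W}_{n-q}(\varphi_t)\le\overline{W}_{n-q}(\varphi)$ is \emph{not} ``exactly the Gaussian Pr\'ekopa--Leindler statement''; it is a statement about an $L^{p}(\gamma_n)$-norm, which Pr\'ekopa--Leindler never produces. In addition, the Colesanti--Fragal\`a homogeneity trick you invoke rescales the \emph{values} of $\varphi,\psi$ by different constants, and this does not commute with the combination $[\varphi(1-t)]\square[\psi t]$ (value-scaling of $\varphi$ corresponds to right scalar multiplication of $\varphi^*$, not of $\varphi$), nor is $\gamma_n$ dilation-invariant, so even the reduction to $\overline{W}_{n-q}(\varphi)=\overline{W}_{n-q}(\psi)$ is not justified as stated. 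The equality analysis inherits these problems, since it leans on the equality case of a Pr\'ekopa--Leindler step that is never actually usable.

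What actually proves the theorem (and is the paper's argument) is much shorter and bypasses Pr\'ekopa--Leindler entirely: take $y=x$ in \eqref{inequality for inf1} to get the pointwise bound $[\varphi(1-t)]\square[\psi t](x)\le(1-t)\varphi(x)+t\psi(x)$ for all $x$, then, since $-q\ge 1$, apply the triangle (Minkowski) inequality for the norm $\|\cdot\|_{L^{-q}(\gamma_n)}$ together with its monotonicity on non-negative functions, yielding $\overline{W}_{n-q}([\varphi(1-t)]\square[\psi t])\le\|(1-t)\varphi+t\psi\|_{L^{-q}(\gamma_n)}\le(1-t)\overline{W}_{n-q}(\varphi)+t\overline{W}_{n-q}(\psi)$. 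The equality discussion then comes from the equality cases of \eqref{inequality for inf1} and of Minkowski's integral inequality (strictness for $-q>1$ forcing proportionality, the case $-q=1$ allowing the translation). Your instinct to start from \eqref{inequality for inf1} was right; the missing idea is that the right companion tool here is the $L^{p}$ triangle inequality, not Pr\'ekopa--Leindler.
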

\begin{proof}
Since $q\leq-1$, (\ref{inequality for inf1}) and Minkowski's inequality, we have
\begin{eqnarray*}\label{}
\overline{W}_{n-q}([\varphi(1-t)]\square[\psi t])&\leq&\left(\int_{\mathbb{R}^n}((1-t)\varphi(x)+t\psi(y))^{-q}d\gamma_n(x)\right)^{\frac{1}{-q}} \\
 &\leq&(1-t)\overline{W}_{n-q}(\varphi)+ t\overline{W}_{n-q}(\psi ).
\end{eqnarray*}
The equality conditions follow from the equality conditions of inequality (\ref{inequality for inf1}) and Minkowski's inequality.
\end{proof}

Similar to the proof of Theorem \ref{functional B-M ineq}, we also obtain the case of $q>-1$ and $q\neq0$.
\begin{cor}
Let $\varphi,\psi \in \mathcal{L}_o$. If $\overline{W}_{n-q}(\varphi)$ and $\overline{W}_{n-q}(\psi )$ are finite and $q>-1$ and $q\neq0$,  then
\begin{eqnarray}\label{B-Minkowski inequality}
\overline{W}_{n-q}([\varphi(1-t)]\square[\psi t])\geq (1-t)\overline{W}_{n-q}(\varphi)+ t\overline{W}_{n-q}(\psi )
\end{eqnarray}
with equality  if and only if  $\varphi=\psi$.

\end{cor}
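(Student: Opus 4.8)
The plan is to run the argument of Theorem \ref{functional B-M ineq} with all the relevant inequalities reversed, the reversal being forced by the fact that the exponent $-q$ now lies strictly below $1$.

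First I would record the pointwise bound obtained from (\ref{inequality for inf1}) by taking $y=x$: for every $x\in\mathbb{R}^n$ and every $t\in(0,1)$,
\begin{equation*}
[\varphi(1-t)]\square[\psi t](x)\le (1-t)\varphi(x)+t\psi(x),
\end{equation*}
with equality for all $x$ if and only if $\varphi$ and $\psi$ agree up to a translation. Raising both sides to the power $-q$ and integrating against $\gamma_n$ — being careful that $s\mapsto s^{-q}$ is increasing for $-1<q<0$ and decreasing for $q>0$, while the normalising exponent carries the opposite sign — one compares $\overline{W}_{n-q}\big([\varphi(1-t)]\square[\psi t]\big)$ with $\overline{W}_{n-q}\big((1-t)\varphi+t\psi\big)$. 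Then I would apply the Minkowski integral inequality in its reversed form, which holds precisely because $-q<1$ and $-q\ne 0$: for non-negative functions $f,g$ one has $\|f+g\|_{L^{-q}(\gamma_n)}\ge\|f\|_{L^{-q}(\gamma_n)}+\|g\|_{L^{-q}(\gamma_n)}$; used with $f=(1-t)\varphi$ and $g=t\psi$ this produces the asserted inequality. The two sub-cases $-1<q<0$ and $q>0$ are treated separately only to keep the directions straight; the arithmetic is otherwise the same, and $q=-1$ must be excluded because there the reversed Minkowski inequality degenerates.

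For the equality statement I would argue that equality in the final inequality forces simultaneously equality in (\ref{inequality for inf1}) — whence $\varphi(\cdot)=\psi(\cdot-x_0)$ for some $x_0\in\mathbb{R}^n$ — and equality in the reversed Minkowski inequality — whence $\varphi=\lambda\psi$ $\gamma_n$-almost everywhere for some $\lambda>0$ (this is exactly where $q\ne -1$ enters). Combining the two conclusions and using that $\varphi,\psi\in\mathcal{L}_o$ tend to $+\infty$ forces $\lambda=1$ and $x_0=0$, so $\varphi=\psi$.

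The step I expect to be the main obstacle is the middle one: the sign of $s\mapsto s^{-q}$ and the sign of the normalising exponent in $\overline{W}_{n-q}$ both change at $q=0$ and at $q=-1$, so one must verify carefully that the diagonal bound above, together with the reversed Minkowski inequality, genuinely closes the estimate on the side claimed. Should the diagonal bound prove too lossy, the fallback is to exploit the full (non-diagonal) strength of (\ref{inequality for inf1}) in the Legendre picture, using $([\varphi(1-t)]\square[\psi t])^{*}=(1-t)\varphi^{*}+t\psi^{*}$ from Proposition \ref{Colesanti's proposition 2.1} together with a Pr\'ekopa--Leindler/transport argument, which is also the natural place to dispatch the finiteness caveats.
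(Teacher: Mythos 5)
Your route is the one the paper itself indicates (the corollary is justified there only by the remark that it is ``similar'' to the proof of Theorem \ref{functional B-M ineq}), but the middle step that you yourself flag as the main obstacle is not a matter of keeping signs straight: it genuinely fails. Write $\overline{W}_{n-q}(\varphi)=\left(\int_{\mathbb{R}^n}\varphi^{-q}\,d\gamma_n\right)^{-1/q}$, the normalization actually used in the proof of Theorem \ref{functional B-M ineq}. Since the inner exponent $-q$ and the outer exponent $-1/q$ always have the same sign, the two monotonicity reversals cancel, so for every $q\neq0$ pointwise domination $\varphi\leq\phi$ implies $\overline{W}_{n-q}(\varphi)\leq\overline{W}_{n-q}(\phi)$. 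Hence the diagonal bound $[\varphi(1-t)]\square[\psi t]\leq(1-t)\varphi+t\psi$ coming from (\ref{inequality for inf1}) can only ever produce the \emph{upper} bound $\overline{W}_{n-q}([\varphi(1-t)]\square[\psi t])\leq\overline{W}_{n-q}((1-t)\varphi+t\psi)$, while the reverse Minkowski inequality (for $-q<1$, $-q\neq0$) bounds the same intermediate quantity from \emph{below} by $(1-t)\overline{W}_{n-q}(\varphi)+t\overline{W}_{n-q}(\psi)$. The two estimates sit on opposite sides of $\overline{W}_{n-q}((1-t)\varphi+t\psi)$ and cannot be concatenated; only for $q\leq-1$ do both point the same way, which is exactly why the argument closes in Theorem \ref{functional B-M ineq} and not here.

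The defect is not repairable by more careful bookkeeping, because with this normalization the asserted inequality is false as stated: take $n=1$, $t=\tfrac12$, $q=-\tfrac12$, $\varphi(x)=|x|$, $\psi(x)=2|x|$. Both functions are positively $1$-homogeneous, so the right scalar multiplications act trivially and $[\varphi(1-t)]\square[\psi t](x)=\inf_{y}\{|x-y|+2|y|\}=|x|$; thus the left-hand side equals $\overline{W}_{n-q}(\varphi)$ while the right-hand side equals $\tfrac12\overline{W}_{n-q}(\varphi)+\tfrac12\cdot 2\,\overline{W}_{n-q}(\varphi)=\tfrac32\overline{W}_{n-q}(\varphi)$, all quantities being finite and positive, so the claimed ``$\geq$'' fails strictly. (If one instead insists on the literal definition $\overline{W}_{n-q}(\varphi)=(\int\varphi^{-q}d\gamma_n)^{1/q}$ given earlier in the paper, this example no longer contradicts the corollary, but then the very same example contradicts Theorem \ref{functional B-M ineq}; the two statements cannot both hold under one normalization.) Consequently your equality discussion has nothing to attach to, and your fallback---passing to Legendre transforms via Proposition \ref{Colesanti's proposition 2.1} and invoking Pr\'ekopa--Leindler or transport---is a suggestion, not an argument. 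You identified the right obstacle, but it is a genuine gap, shared by the paper's own one-line justification: the statement needs a corrected normalization or a reversed inequality before any proof along these lines can go through.
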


The following lemma is useful.

\begin{lemma}\label{part proof of Minkowski inequality}
Let $q\leq0$ and $\varphi,\psi \in \mathcal{L}_o$. Then
\begin{eqnarray*}
&&\lim_{t\rightarrow0^{+}}\frac{\widetilde{W}_{n+1-q}\big([\varphi(1-t)]\square[\psi t]\big)-\widetilde{W}_{n+1-q}(\varphi)}{t}\\
&&\quad\quad=(q-1)\widetilde{W}_{n+1-q}(\varphi,\psi)-(q-1)\widetilde{W}_{n+1-q}(\varphi,\varphi).
\end{eqnarray*}
\end{lemma}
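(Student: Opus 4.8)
The plan is to reduce the limit in Lemma \ref{part proof of Minkowski inequality} to the two already-analyzed one-parameter families, namely $t\mapsto \varphi\Box(\psi t)$ (governing $\widetilde{W}_{n+1-q}(\varphi,\psi)$) and $t\mapsto\varphi(1+t)=\varphi\Box(\varphi t)$ (governing $\widetilde{W}_{n+1-q}(\varphi,\varphi)$). Write $g_t=[\varphi(1-t)]\Box[\psi t]$. First I would record the algebraic identity coming from Proposition \ref{Colesanti's proposition 2.1}: since $([\varphi(1-t)]\Box[\psi t])^*=(1-t)\varphi^*+t\psi^*$, the family $g_t$ is, on the level of Legendre transforms, the affine path from $\varphi^*$ to $\psi^*$. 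In particular $g_0=\varphi$, and differentiating the Legendre transforms at $t=0$ gives $\tfrac{d}{dt}g_t^*\big|_{t=0}=\psi^*-\varphi^*$. This is the functional analogue of the first variation formula, and it is the structural reason the answer should be $(q-1)\widetilde W_{n+1-q}(\varphi,\psi)-(q-1)\widetilde W_{n+1-q}(\varphi,\varphi)$: the $\psi^*$-term produces the mixed quermassintegral with $\psi$, the $-\varphi^*$-term produces the mixed quermassintegral with $\varphi$, and the factor $(q-1)$ is exactly the one appearing in the chain-rule computation $\tfrac{d}{dt}g_t^{1-q}=(1-q)g_t^{-q}\tfrac{d}{dt}g_t$ that already showed up in Step 2 of Theorem \ref{integral formula}.

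Next I would make this precise by the same differentiation-under-the-integral-sign argument used in Theorem \ref{integral formula}. The key input is the analogue of Lemma \ref{Colesanti's lemma 4.11}: for $x$ in the interior of $\mathrm{dom}(\varphi)$,
\begin{eqnarray*}
\frac{d}{dt}g_t(x)\Big|_{t=0^+}=-\psi^*(\nabla\varphi(x))+\varphi^*(\nabla\varphi(x)),
\end{eqnarray*}
which follows from Lemma \ref{Colesanti's lemma 4.11} applied twice (or directly, since $g_t^*=(1-t)\varphi^*+t\psi^*$ and the envelope theorem gives $\tfrac{d}{dt}g_t(x)=-\langle\nabla g_t^*(\nabla g_t(x)),\,\psi^*-\varphi^*\rangle/\langle\cdots\rangle$ — more cleanly, one uses $g_t(x)+g_t^*(y)=\langle x,y\rangle$ at $y=\nabla g_t(x)$ and differentiates). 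Consequently
\begin{eqnarray*}
\frac{d}{dt}g_t^{1-q}(x)\Big|_{t=0^+}=(1-q)\varphi(x)^{-q}\big(\varphi^*(\nabla\varphi(x))-\psi^*(\nabla\varphi(x))\big).
\end{eqnarray*}
Then I would integrate against $d\gamma_n$ and pass the limit inside. By Lemma \ref{Colesanti's lemma 4.10}(1) the family $g_t$ is monotone in $t$ on $[0,1]$ (using $\varphi(0)=\psi(0)=0$), so the difference quotients are dominated, exactly as in Step 2 of Theorem \ref{integral formula}; the integrability of the limiting integrand $\varphi^{-q}(\varphi^*(\nabla\varphi)+\psi^*(\nabla\varphi))$ is guaranteed by Lemma \ref{finiteness} together with the admissible-perturbation bound, as in Step 3. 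This yields
\begin{eqnarray*}
\lim_{t\to0^+}\frac{\widetilde W_{n+1-q}(g_t)-\widetilde W_{n+1-q}(\varphi)}{t}
&=&(q-1)\int_{\mathbb{R}^n}\big(\psi^*(\nabla\varphi)-\varphi^*(\nabla\varphi)\big)\varphi^{-q}\,d\gamma_n\\
&=&(q-1)\widetilde W_{n+1-q}(\varphi,\psi)-(q-1)\widetilde W_{n+1-q}(\varphi,\varphi),
\end{eqnarray*}
where the last equality is the integral representation of Theorem \ref{integral formula} applied to the pair $(\varphi,\psi)$ and to $(\varphi,\varphi)$.

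The main obstacle will be the same one that made Theorem \ref{integral formula} delicate: justifying the interchange of limit and integral when $\widetilde W_{n+1-q}(\varphi,\psi)$ is allowed to be $+\infty$. I expect that in the generality stated (only $\varphi,\psi\in\mathcal L_o$, no regularity, no finiteness of $\widetilde W_{n+2-q}$) one cannot literally differentiate under the integral; instead the clean route is to combine the inequality $g_t(x)\le(1-t)\varphi(x)+t\psi(x)$ from Lemma \ref{inequality for inf1} with Fatou's lemma for one inequality and the monotone/dominated structure for the reverse, so that the stated limit holds as an identity in $[-\infty,0]$ with both sides possibly infinite — matching the convention $\widetilde W_{n+1-q}(\varphi,\psi)\in[0,+\infty]$ from Theorem \ref{integral formula}. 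A secondary technical point is boundary behavior: $g_t$ need not be differentiable on $\partial\,\mathrm{dom}(\varphi)$, but since that set is $\gamma_n$-null (it has Lebesgue measure zero, being the boundary of a convex set), it does not affect the integrals, exactly as in the cited lemmas of Colesanti–Fragalà.
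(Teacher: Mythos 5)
Your derivation captures the correct formal first variation, but as a proof of the lemma as stated it has a genuine gap: every identification at the end goes through the integral representation of Theorem \ref{integral formula} (and Lemma \ref{finiteness}, and the ``admissible-perturbation bound'' you invoke to dominate the integrand), whose hypotheses --- $\varphi,\psi\in\mathcal{L}_o'$, the admissibility assumption, and finiteness of $\widetilde W_{n+2-q}(\varphi)$ --- are simply not available here, since the lemma assumes only $\varphi,\psi\in\mathcal{L}_o$ and is later used in Theorem \ref{Minkowski inequality} in exactly that generality. Under the bare hypotheses, $\widetilde W_{n+1-q}(\varphi,\psi)$ is \emph{defined} only as the limit of $\bigl[\widetilde W_{n+1-q}(\varphi\Box(\psi t))-\widetilde W_{n+1-q}(\varphi)\bigr]/\bigl((q-1)t\bigr)$ and may be $+\infty$, so writing the answer as $(q-1)\int\bigl(\psi^*(\nabla\varphi)-\varphi^*(\nabla\varphi)\bigr)\varphi^{-q}\,d\gamma_n$ and then ``applying Theorem \ref{integral formula} to $(\varphi,\psi)$ and $(\varphi,\varphi)$'' never connects to the definitional limit except through the unavailable representation. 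The analytic steps are also unsecured: the envelope identity for $\frac{d}{dt}g_t$ and the interchange of the $t$-limit with the Gaussian integral need regularity and an integrable majorant you do not have (note also that Lemma \ref{Colesanti's lemma 4.10} concerns the family $\varphi\Box(\psi t)$, not $[\varphi(1-t)]\square[\psi t]$, which need not be monotone in $t$), and your fallback (``Fatou one way, monotone/dominated the other'') does not treat the degenerate case in which $\varphi\Box(\psi s)$ decreases, as $s\to0^+$, to a convex $\tilde\varphi$ strictly below $\varphi$ on a set of positive measure; there both sides of the asserted identity are $-\infty$ and no differentiation under the integral sign is possible.

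The paper's proof avoids all of this with one algebraic observation you did not use: $[\varphi(1-t)]\square[\psi t]=\bigl(\varphi\Box[\psi\,a(t)]\bigr)(1-t)$ with $a(t)=t/(1-t)$, which splits the difference quotient into two addenda. The inf-convolution addendum is, since $a(t)/t\to1$, literally the defining limit of the dual mixed quermassintegral, hence equals $(q-1)\widetilde W_{n+1-q}(\varphi,\psi)$ with no representation formula and no extra hypotheses. The dilation addendum is evaluated by rescaling the Gaussian integral, the mean value theorem in the dilation parameter, and monotone convergence; its limit, $-(n+1-q)\widetilde W_{n+1-q}(\tilde\varphi)+\int|x|^2\tilde\varphi^{1-q}\,d\gamma_n$, is converted (when $\tilde\varphi=\varphi$ a.e.) into $-(q-1)\widetilde W_{n+1-q}(\varphi,\varphi)$ via the closed formula $(q-1)\widetilde W_{n+1-q}(\varphi,\varphi)=(n+1-q)\widetilde W_{n+1-q}(\varphi)-\int|x|^2\varphi^{1-q}\,d\gamma_n$ from the first lemma of Section 3, not via Theorem \ref{integral formula}; the remaining case $\widetilde W_{n+1-q}(\tilde\varphi)<\widetilde W_{n+1-q}(\varphi)$ is disposed of separately as the $-\infty$ case. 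To salvage your route you would have to either strengthen the lemma's hypotheses to those of Theorem \ref{integral formula} or switch to this decomposition.
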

\begin{proof}
Let $a(t)=\frac{t}{1-t}$, for $t\in(0,1)$. From the definition of infimal convolution (\ref{infimal convolution}), we have
\begin{eqnarray*}\label{}
&&\big[\varphi\Box [\psi a(t)]\big](1-t)(x)\\
&&\quad\quad=(1-t)\big[\varphi\Box [\psi a(t)]\big]\left(\frac{x}{1-t}\right)\\
&&\quad\quad=(1-t)\inf_{y\in\mathbb{R}^n}\left\{\varphi\left(\frac{x-(1-t)y}{1-t}\right)+\frac{t}{1-t}\psi\left(\frac{(1-t)y}{t}\right)\right\}\\
&&\quad\quad=\inf_{y\in\mathbb{R}^n}\left\{(1-t)\varphi\left(\frac{x-y}{1-t}\right)+t\psi\left(\frac{y}{t}\right)\right\}\\
&&\quad\quad=[\varphi(1-t)]\square[\psi t] (x),
\end{eqnarray*}
and 
\begin{eqnarray}\label{part proof of Minkowski inequality1}
&&\frac{\widetilde{W}_{n+1-q}\big([\varphi(1-t)]\square[\psi t]\big)-\widetilde{W}_{n+1-q}(\varphi)}{t}\nonumber \\
&&\quad\quad =\frac{\widetilde{W}_{n+1-q}\big([\varphi\Box [\psi a(t)]\big](1-t)\big)-\widetilde{W}_{n+1-q}(\varphi\Box [\psi a(t))}{t}\nonumber\\
&&\quad\quad\quad\quad\quad\quad+\frac{\widetilde{W}_{n+1-q}(\varphi\Box [\psi a(t))-\widetilde{W}_{n+1-q}(\varphi)}{t}.
\end{eqnarray}
We set $\varphi_{a(t)}=\varphi\Box [\psi a(t)]$, and
$$
b_t(s)=\widetilde{W}_{n+1-q}\big(\varphi_{a(t)}(1-s)\big),
$$
for $s\in(0,1)$. By  a direct calculation,
\begin{eqnarray*}
\frac{d}{ds}b_t(s)&=&(2\pi)^{-\frac{n}{2}}\frac{d}{ds}(1-s)^{1-q}\int_{\mathbb{R}^n}
\left(\varphi_{a(t)}\left(\frac{x}{1-s}\right)\right)^{1-q}e^{-\frac{|x|^2}{2}}dx\\
&=&(2\pi)^{-\frac{n}{2}}\frac{d}{ds}(1-s)^{n+1-q}\int_{\mathbb{R}^n}
\left(\varphi_{a(t)}\left(x\right)\right)^{1-q}e^{-\frac{(1-s)^2|x|^2}{2}}dx\\
&=&-(n+1-q)(2\pi)^{-\frac{n}{2}}(1-s)^{n-q}\int_{\mathbb{R}^n}
\left(\varphi_{a(t)}\left(x\right)\right)^{1-q}e^{-\frac{(1-s)^2|x|^2}{2}}dx\\
&&\quad +\ (2\pi)^{-\frac{n}{2}}(1-s)^{n+1-q}\int_{\mathbb{R}^n}
\left(\varphi_{a(t)}\left(x\right)\right)^{1-q}e^{-\frac{(1-s)^2|x|^2}{2}}(1-s)|x|^2dx.
\end{eqnarray*}
Then for every fixed $t\in(0,1)$, by Lagrange theorem we have that there exists a $\bar{s}\in(0,1)$ such that
\begin{eqnarray*}
&&\frac{\widetilde{W}_{n+1-q}\big([\varphi\Box [\psi a(t)]\big](1-t)\big)-\widetilde{W}_{n+1-q}(\varphi\Box [\psi a(t))]}{t}\\
&&\quad\quad=b'_t(\bar{s})\\
&&\quad\quad=-(n+1-q)(2\pi)^{-\frac{n}{2}}(1-\bar{s})^{n-q}\int_{\mathbb{R}^n}
\left(\varphi_{a(t)}\left(x\right)\right)^{1-q}e^{-\frac{(1-\bar{s})^2|x|^2}{2}}dx\\
&&\quad\quad\quad +\ (2\pi)^{-\frac{n}{2}}(1-\bar{s})^{n+1-q}\int_{\mathbb{R}^n}
\left(\varphi_{a(t)}\left(x\right)\right)^{1-q}e^{-\frac{(1-\bar{s})^2|x|^2}{2}}(1-\bar{s})|x|^2dx.
\end{eqnarray*}

By Lemma \ref{Colesanti's lemma 4.10}, as $t\rightarrow 0^{+}$  functions $\varphi_{a(t)}$ converge decreasing to a convex  function $\tilde{\varphi}$. Then, by monotone convergence theorem, $\bar{s}\rightarrow 0^{+}$ and $a(t)\rightarrow 0^{+}$ as $t\rightarrow 0^{+}$, we obtain
\begin{eqnarray}\label{part proof of Minkowski inequality2}
&&\lim_{t\rightarrow0^{+}}\frac{\widetilde{W}_{n+1-q}\big([\varphi\Box [\psi a(t)]\big](1-t)\big)-\widetilde{W}_{n+1-q}(\varphi\Box [\psi a(t)])}{t}\nonumber \\
&&\quad\quad=-(n+1-q)\widetilde{W}_{n+1-q}(\tilde{\varphi})+ \int_{\mathbb{R}^n}
\tilde{\varphi}\left(x\right)^{1-q}|x|^2d\gamma_n(x).
\end{eqnarray}
Concerning the second addendum in (\ref{part proof of Minkowski inequality1}),  Definition \ref{definion of mixed functional dual  quermassintegrals} shows immediately that
\begin{eqnarray}\label{part proof of Minkowski inequality3}
&&\lim_{t\rightarrow0^{+}}\frac{\widetilde{W}_{n+1-q}(\varphi\Box [\psi a(t))-\widetilde{W}_{n+1-q}(\varphi)}{t}=(q-1)\widetilde{W}_{n+1-q}(\varphi,\psi).
\end{eqnarray}

Recall that the functions $\varphi_{a(t)}$ converge decreasing to some convex positive function $\tilde{\varphi}$ as $t\rightarrow0^+$. In order to obtain the desired equality, we may distinguish the two cases of $\widetilde{W}_{n+1-q}(\tilde{\varphi})\leq\widetilde{W}_{n+1-q}(\varphi)$ and $\widetilde{W}_{n+1-q}(\tilde{\varphi})=\widetilde{W}_{n+1-q}(\varphi)$. If $\widetilde{W}_{n+1-q}(\tilde{\varphi})\leq\widetilde{W}_{n+1-q}(\varphi)$, the limit in
(\ref{part proof of Minkowski inequality3}) becomes $-\infty$, and the limit in (\ref{part proof of Minkowski inequality2}) is finite, hence
\begin{eqnarray*}
\lim_{t\rightarrow0^{+}}\frac{\widetilde{W}_{n+1-q}\big([\varphi(1-t)]\square[\psi t]\big)-\widetilde{W}_{n+1-q}(\varphi)}{t}=-\infty,
\end{eqnarray*}
and the result of the lemma holds true.  If $\widetilde{W}_{n+1-q}(\tilde{\varphi})=\widetilde{W}_{n+1-q}(\varphi)$, then $\tilde{\varphi}=\varphi$ almost everywhere, so that the right hand side of (\ref{part proof of Minkowski inequality2}) is $(1-q)\widetilde{W}_{n+1-q}(\varphi,\varphi)$. The lemma follows by summing up (\ref{part proof of Minkowski inequality2}) and (\ref{part proof of Minkowski inequality3}).
\end{proof}

By Lemma \ref{part proof of Minkowski inequality}, we obtain the following Minkowski inequality for the functional dual mixed  quermassintegrals.

\begin{thm}\label{Minkowski inequality}
Let $\varphi,\psi \in \mathcal{L}_o$. If $q\leq0$, then
\begin{eqnarray}\label{Minkowski inequality}
\widetilde{W}_{n+1-q}(\varphi,\psi)\geq\frac{\overline{W}_{n+1-q}(\psi)^{}
 -\overline{W}_{n+1-q}(\varphi)^{}}{\overline{W}_{n+1-q}(\varphi)^{q}}
 +\widetilde{W}_{n+1-q}(\varphi,\varphi),
\end{eqnarray}
with equality  if and only if $\varphi(x)=\psi(x-x_0)$ with $x_0\in \mathbb{R}^n$ when $q=0$ and $\varphi(x)=\psi(x)$ when $q<0$.
\end{thm}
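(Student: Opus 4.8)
The plan is to derive Theorem~\ref{Minkowski inequality} by combining the Brunn--Minkowski inequality for the normalized dual quermassintegrals (Theorem~\ref{functional B-M ineq} together with its $q>-1$ companion Corollary) with the first-variation formula of Lemma~\ref{part proof of Minkowski inequality}. The underlying principle is the classical one: a sharp Brunn--Minkowski inequality, differentiated at $t=0^+$, yields a sharp Minkowski inequality. First I would record that, for $\varphi,\psi\in\mathcal{L}_o$ with finite normalized dual quermassintegrals, the function
\begin{eqnarray*}
t\longmapsto \overline{W}_{n+1-q}\big([\varphi(1-t)]\square[\psi t]\big)-\big((1-t)\overline{W}_{n+1-q}(\varphi)+t\overline{W}_{n+1-q}(\psi)\big)
\end{eqnarray*}
vanishes at $t=0$ and, by Theorem~\ref{functional B-M ineq} (for $q\le-1$, after rescaling the exponent) or its Corollary (for $-1<q\le0$, $q\neq0$), has a definite sign on $(0,1)$; hence its one-sided derivative at $0^+$ has the corresponding sign. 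Writing $W:=\widetilde{W}_{n+1-q}(\varphi)=\overline{W}_{n+1-q}(\varphi)^{-q}$ and differentiating $\overline{W}_{n+1-q}=\widetilde{W}_{n+1-q}^{\,1/(-q)}$ via the chain rule, the derivative at $0^+$ of the left term is
\begin{eqnarray*}
\frac{1}{-q}\,W^{\frac{1}{-q}-1}\cdot\lim_{t\to0^+}\frac{\widetilde{W}_{n+1-q}\big([\varphi(1-t)]\square[\psi t]\big)-\widetilde{W}_{n+1-q}(\varphi)}{t},
\end{eqnarray*}
and Lemma~\ref{part proof of Minkowski inequality} identifies the inner limit as $(q-1)\widetilde{W}_{n+1-q}(\varphi,\psi)-(q-1)\widetilde{W}_{n+1-q}(\varphi,\varphi)$.

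Next I would assemble these pieces. The derivative at $0^+$ of the right (linear) term is simply $\overline{W}_{n+1-q}(\psi)-\overline{W}_{n+1-q}(\varphi)$. Imposing the sign dictated by Brunn--Minkowski and clearing the positive factor $\frac{1}{-q}W^{\frac1{-q}-1}$ (taking care that for $q\le-1$ one has a ``$\le$'' Brunn--Minkowski inequality while for $-1<q\le0$ one has a ``$\ge$'', and that $q-1<0$ flips the inequality accordingly, so that the two regimes yield the \emph{same} final inequality), I obtain
\begin{eqnarray*}
(1-q)\big(\widetilde{W}_{n+1-q}(\varphi,\psi)-\widetilde{W}_{n+1-q}(\varphi,\varphi)\big)\;\frac{1}{-q}W^{\frac1{-q}-1}\;\geq\;\overline{W}_{n+1-q}(\psi)-\overline{W}_{n+1-q}(\varphi),
\end{eqnarray*}
which after simplifying $\frac{1-q}{-q}W^{\frac1{-q}-1}$ against the definition $\overline{W}_{n+1-q}(\varphi)^{q}=W^{-1}$ rearranges exactly to \eqref{Minkowski inequality}. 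The limiting case $q=0$ has to be treated separately since $\overline{W}_n$ is defined through $\exp(-\int\log\varphi\,d\gamma_n)$ rather than a power; here one differentiates the logarithmic Brunn--Minkowski inequality obtained as the $q\to0$ limit of the Corollary, or argues directly from the Pr\'ekopa--Leindler inequality in the proof of \eqref{inequality for inf1}, and Lemma~\ref{part proof of Minkowski inequality} still supplies the first variation.

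For the equality characterization I would track equality through each step: equality in the differentiated Brunn--Minkowski inequality forces equality in Theorem~\ref{functional B-M ineq} / its Corollary for all small $t$ (using convexity/concavity of the relevant one-parameter function, so that a vanishing derivative at an endpoint forces the function to be affine, hence extremal throughout), and the stated equality conditions there give $\varphi(x)=\psi(x-x_0)$ when $q=-1$ or $q=0$ and $\varphi=\psi$ when $q<-1$ or $-1<q<0$; one then checks these conditions are genuinely compatible with equality in \eqref{Minkowski inequality} by direct substitution. The main obstacle I anticipate is the bookkeeping of signs and the separate handling of the subcase $\widetilde{W}_{n+1-q}(\tilde\varphi)<\widetilde{W}_{n+1-q}(\varphi)$ inside Lemma~\ref{part proof of Minkowski inequality}: when that strict inequality occurs the first variation is $-\infty$ and \eqref{Minkowski inequality} holds trivially (its right-hand side being $-\infty$ as well, since $\overline{W}_{n+1-q}(\psi)$ can be controlled), so the substantive content is entirely in the case $\tilde\varphi=\varphi$ a.e., and one must argue that in the equality case this degenerate branch cannot arise. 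Verifying that the two Brunn--Minkowski regimes ($q\le-1$ versus $-1<q\le0$) really do produce one and the same inequality \eqref{Minkowski inequality} after the sign flips is the point I would check most carefully.
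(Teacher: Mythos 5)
Your overall strategy -- differentiate a Brunn--Minkowski inequality at $t=0^+$ and feed in Lemma \ref{part proof of Minkowski inequality} -- is exactly the paper's (the paper packages it as the convexity of $\Upsilon(t)=\widetilde{W}_{n+1-q}\big([\varphi(1-t)]\square[\psi t]\big)^{\frac{1}{1-q}}$, whence $\Upsilon'(0)\le\Upsilon(1)-\Upsilon(0)$). But your execution has a genuine index/sign error. The Brunn--Minkowski input must be for the functional $\overline{W}_{n+1-q}$, i.e.\ Theorem \ref{functional B-M ineq} applied with the parameter $q$ replaced by $q-1$; since $q\le 0$ gives $q-1\le-1$, this single ``$\le$'' inequality covers the whole range, and the Corollary is neither needed nor applicable (its hypothesis $q>-1$ fails for the shifted parameter, and applied at the unshifted $q$ it concerns $\int\varphi^{-q}d\gamma_n$, not the quantity $\int\varphi^{1-q}d\gamma_n$ appearing in the theorem). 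Your proposed split into the regimes $q\le-1$ and $-1<q\le 0$, with a ``$\ge$'' Brunn--Minkowski inequality in the second regime, is therefore wrong: had you genuinely used a reversed inequality there, the differentiation would yield the reversed Minkowski inequality, and the factor $q-1<0$ cannot rescue this because it is already consumed by Lemma \ref{part proof of Minkowski inequality} and the chain rule rather than providing an extra flip. The same confusion infects your equality discussion: tracking equality through Theorem \ref{functional B-M ineq} at the shifted parameter gives translation only when $q-1=-1$, i.e.\ $q=0$, and forces $\varphi=\psi$ for all $q<0$; your split would incorrectly allow mere translation-equivalence at $q=-1$, contradicting the statement being proved.

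Two further concrete slips. First, the normalization: in this theorem $\overline{W}_{n+1-q}(\varphi)=\widetilde{W}_{n+1-q}(\varphi)^{\frac{1}{1-q}}$, i.e.\ $W=\overline{W}_{n+1-q}(\varphi)^{1-q}$, not $\overline{W}_{n+1-q}(\varphi)^{-q}$ as you write; consequently the chain-rule factor is $\frac{1}{1-q}W^{\frac{q}{1-q}}$, not $\frac{1}{-q}W^{\frac{1}{-q}-1}$, and with your factor the displayed inequality does not in fact rearrange to \eqref{Minkowski inequality} (an extra multiple of $\frac{1-q}{-q}\,\overline{W}_{n+1-q}(\varphi)$ survives). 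Second, your worry about $q=0$ is misplaced for the same reason: at index $n+1-q$ with $q=0$ the exponent is $1-q=1$, so $\overline{W}_{n+1}$ is an ordinary power mean and no logarithmic normalization (and no separate limiting argument) is required; with the correct exponents the proof is uniform in $q\le 0$. Your remark about the degenerate branch of Lemma \ref{part proof of Minkowski inequality} (first variation $=-\infty$) is fine and harmless, but the sign/exponent bookkeeping you flagged as the delicate point is precisely where the argument, as written, fails.
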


\begin{proof}

Let
$$
\Upsilon(t)=\widetilde{W}_{n+1-q}([\varphi(1-t)]\square[\psi t])^{\frac{1}{1-q}}.
$$
From the Brunn-Minkowski style inequality (\ref{B-Minkowski inequality}),  we noticed that $\Upsilon(t)$ is convex on $[0,1]$, hence
\begin{eqnarray}\label{main inequality 0}
\Upsilon(t)\leq\Upsilon(0)+t[\Upsilon(1)-\Upsilon(0)],
\end{eqnarray}
for $t\in[0,1]$. As a consequence, the derivative of the function $\Upsilon$ at $t=0$ satisfies
\begin{eqnarray}\label{main inequality}
\Upsilon'(0)\leq\Upsilon(1)-\Upsilon(0).
\end{eqnarray}
Lemma \ref{part proof of Minkowski inequality} deduces that
\begin{eqnarray*}\label{}
\Upsilon'(0)=\widetilde{W}_{n+1-q}(\varphi)^{\frac{q}{1-q}}\left[\widetilde{W}_{n+1-q}(\varphi,\varphi)-\widetilde{W}_{n+1-q}(\varphi,\psi)
\right].
\end{eqnarray*}
Inequality (\ref{main inequality}) can be rewritten as
\begin{eqnarray}\label{}
&&\widetilde{W}_{n+1-q}(\varphi)^{\frac{q}{1-q}}\left[
\widetilde{W}_{n+1-q}(\varphi,\varphi)-\widetilde{W}_{n+1-q}(\varphi,\psi)\right]\nonumber \\
&&\quad \leq\widetilde{W}_{n+1-q}(\psi)^{\frac{1}{1-q}}-\widetilde{W}_{n+1-q}(\varphi)^{\frac{1}{1-q}}.
\end{eqnarray}

Finally, assume that $\varphi(x)=\psi(x-x_0)$ with $x_0\in \mathbb{R}^n$ when $q=0$ and $\varphi(x)=\psi(x)$ when $q<0$, then (\ref{Minkowski inequality}) holds with equality sign. Conversely, assume that (\ref{Minkowski inequality}) holds with equality sign. By inspection of the above proof one sees immediately that also inequality (\ref{main inequality}), and hence inequality (\ref{main inequality 0}) and inequality (\ref{B-Minkowski inequality}), must hold with equality sign.  This entails that (\ref{B-Minkowski inequality}) holds as an equality, and therefore, $\varphi$ and $\psi$ agree up to a translation when $q=0$ and $\varphi(x)=\psi(x)$ when $q<0$.
\end{proof}

%By the Pr\'{e}kopa-Leindler  inequality we have
%\begin{eqnarray}\label{}
%\int_{\mathbb{R}^n}e^{-[\varphi(1-t)]\square[\psi t]}dx \geq \left(\int_{\mathbb{R}^n}e^{-\varphi}dx\right)^{1-t}\left(\int_{\mathbb{R}^n}e^{-\psi}dx\right)^{1-t},
%\end{eqnarray}
%with equality holds if and only if $\varphi(x)=\psi(x+x_0)$, $x_0\in \mathbb{R}^n$.
%
%
%On the other hand, from H\"older inequality, we have
%\begin{eqnarray}\label{}
%\int_{\mathbb{R}^n}e^{-[(1-t)\varphi+t\psi]}dx
% &=&\int_{\mathbb{R}^n}\left(e^{-\varphi}\right)^{1-t}\left(e^{-\psi}\right)^{t}dx\nonumber\\
% &\leq& \left(\int_{\mathbb{R}^n}e^{-\varphi}dx\right)^{1-t}
%\left(\int_{\mathbb{R}^n}e^{-\psi}dx\right)^{t},
%\end{eqnarray}
%with equality holds if and only if $\varphi=a \psi$, $a>0$.
%
%
%Therefore, if equality holds in (\ref{inequality for inf1}), then $\varphi=\psi$.

%~
\vskip 0.3cm
~
\section{Dual curvature measures for convex functions}\label{functional dual curvature measures}

Theorem \ref{integral formula} provides a suitable way to define dual curvature measures for convex functions, and   we will investigate the functional dual curvature measure in this section.

\begin{defi}\label{definition}
Let  $q\in \mathbb{R}$ and $\varphi\in \mathcal{L}_o$. If $0<\int_{\mathbb{R}^n}\varphi(x)^{-q}d\gamma_n(x)<+\infty$, then $\widetilde{C}_q(\varphi,\cdot)$, the $q$-th dual curvature measure of $\varphi$,  is defined as
\begin{eqnarray}\label{definition0}
\int_{\mathbb{R}^n}g(z)^{}d\widetilde{C}_q(\varphi,z)
=\int_{\mathbb{R}^n}g(\nabla \varphi^{}(x))^{}\varphi(x)^{-q}d\gamma_n(x),
\end{eqnarray}
for every  bounded continuous function $g:\mathbb{R}^n\rightarrow\mathbb{R}$. Here $d\gamma_n(x)=(2\pi)^{-\frac{n}{2}}e^{-\frac{|x|^2}{2}}dx$ is the Gaussian probability measure.
\end{defi}

If $\varphi$ is a $\mathcal{C}^2$-smooth and strictly convex function,  then the map 
$$\nabla\varphi:\{x\in \mathbb{R}^n:\varphi(x)<+\infty\}\rightarrow \{x\in \mathbb{R}^n:\varphi^*(x)<+\infty\}
$$
 is smooth and bijective. By the formulas (see, e.g.,  \cite{Mccann}),
\begin{eqnarray}\label{Mccann's formulas}
 x=\nabla\varphi^*(\nabla\varphi(x))\quad \text{and}\quad \nabla^2\varphi^*(\nabla\varphi(x))=(\nabla^2\varphi(x))^{-1}
\end{eqnarray}
for the points of $\{x\in \mathbb{R}^n:\varphi(x)<+\infty\}$ at which its Hessian $\nabla^2\varphi$ exists and is invertible,  we have
\begin{eqnarray*}
\int_{\Omega}g(x)^{}d\widetilde{C}_q(\varphi,x)
&=&(2\pi)^{-\frac{n}{2}}\int_{\Omega}g(\nabla\varphi(x))^{}\varphi(x)^{-q}e^{-\frac{|x|^2}{2}}dx \\
&=&(2\pi)^{-\frac{n}{2}}\int_{\Omega}g(\nabla\varphi(x))^{}\varphi(\nabla\varphi^*
(\nabla\varphi(x)))^{-q}e^{-\frac{|\nabla\varphi^*(\nabla\varphi(x))|^2}{2}}dx \\
&=&(2\pi)^{-\frac{n}{2}}\int_{\Omega}g(z)^{}\varphi(\nabla\varphi^*
(z))^{-q}e^{-\frac{|\nabla\varphi^*(z)|^2}{2}}\det(\nabla^2\varphi^*(z))dz,
\end{eqnarray*}
where $\Omega=\{x\in \mathbb{R}^n:\varphi(x)<+\infty\}\cap \{x\in \mathbb{R}^n:\varphi^*(x)<+\infty\}$. Hence, if $\varphi$ is a $\mathcal{C}^2$-smooth and strictly convex function, then $\widetilde{C}_q(\varphi,\cdot)$ is absolutely
continuous with respect to the  $n$-dimensional Hausdorff measure   and the Radon-Nikodym
derivative is
$$
\frac{d\widetilde{C}_q(\varphi,x)}{dx}=(2\pi)^{-\frac{n}{2}}\varphi(\nabla\varphi^*
(x))^{-q}e^{-\frac{|\nabla\varphi^*(x)|^2}{2}}\det(\nabla^2\varphi^*(x))\quad\text{on}\quad\Omega. 
$$
~~~

The following lemma  shows that  the Borel measure $\widetilde{C}_q(\varphi,\cdot)$ includes the Borel measure $\widetilde{C}_q(K,\cdot)$ of \cite{HuangLYZ}.

\begin{lemma}
Let $K\in \mathcal{K}_o^n$ be strictly convex  and $q<n$. Then the Borel measure $\widetilde{C}_q(\|x\|_{K},\cdot)$ includes the $q$-th dual curvature measure, $\widetilde{C}_q(K,\cdot)$, of $K$.
\end{lemma}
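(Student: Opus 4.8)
The plan is to test the measure $\widetilde{C}_q(\|x\|_K,\cdot)$ against an arbitrary bounded continuous $g:\mathbb{R}^n\to\mathbb{R}$ through Definition \ref{definition}, pass to polar coordinates, and recognize the spherical integral that appears as a dimensional multiple of the integral in (\ref{the integral formal of q-dual curvature measure}) defining $\widetilde{C}_q(K,\cdot)$. First I would record that $\|x\|_K=h_{K^\circ}(x)$ lies in $\mathcal{L}_o$ (it is finite, convex, non-negative and, since $K$ is bounded, tends to $+\infty$) and that, by (\ref{gauge function}) and polar coordinates, $\int_{\mathbb{R}^n}\|x\|_K^{-q}\,d\gamma_n(x)=\int_{\mathbb{R}^n}\rho_K(x)^q\,d\gamma_n(x)$ is finite and positive exactly because $q<n$: near the origin $\rho_K(x)^q$ is comparable to $|x|^{-q}$, which is Lebesgue integrable there iff $n-q>0$, while the Gaussian weight controls the behaviour at infinity. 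Hence $\widetilde{C}_q(\|x\|_K,\cdot)$ is well defined.

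The key step is to express $\nabla\|x\|_K$ through the Gauss map of $K$. Since $\|x\|_K$ is positively $1$-homogeneous, $\nabla\|x\|_K$ is $0$-homogeneous and Euler's identity gives $\langle x,\nabla\|x\|_K\rangle=\|x\|_K$. At a boundary point $\bar u\in\partial K$ where $\|\cdot\|_K$ is differentiable, the level set $\{\|\cdot\|_K=1\}=\partial K$ forces $\nabla\|\bar u\|_K$ to be a positive multiple of the outer unit normal $\nu_K(\bar u)$; combining this with Euler's identity and $\langle\bar u,\nu_K(\bar u)\rangle=h_K(\nu_K(\bar u))$ gives $\nabla\|\bar u\|_K=\nu_K(\bar u)/h_K(\nu_K(\bar u))$. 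By $0$-homogeneity, for $x=tu$ with $t>0$, $u\in S^{n-1}$, and $\bar u=\rho_K(u)u\in\partial K$, this becomes $\nabla\|x\|_K=\alpha_K(u)/h_K(\alpha_K(u))$ for $\mathcal{H}^{n-1}$-a.e.\ direction $u$; strict convexity of $K$ is used precisely so that $\alpha_K$ is single-valued off an $\mathcal{H}^{n-1}$-null set, which in turn contributes nothing to the Gaussian integral.

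Substituting this into (\ref{definition0}) and passing to polar coordinates $x=tu$, $dx=t^{n-1}\,dt\,du$, and using $\|tu\|_K^{-q}=t^{-q}\rho_K(u)^q$ together with the $0$-homogeneity of the gradient, the integral factors as
$$\int_{\mathbb{R}^n}g(z)\,d\widetilde{C}_q(\|x\|_K,z)=(2\pi)^{-\frac n2}\Big(\int_0^{\infty}t^{\,n-q-1}e^{-\frac{t^2}{2}}\,dt\Big)\int_{S^{n-1}}g\Big(\frac{\alpha_K(u)}{h_K(\alpha_K(u))}\Big)\rho_K(u)^q\,du.$$
The radial factor equals $2^{\frac{n-q}{2}-1}\Gamma\!\big(\tfrac{n-q}{2}\big)$ via the substitution $s=t^2/2$, and applying (\ref{the integral formal of q-dual curvature measure}) to the bounded continuous function $v\mapsto g(v/h_K(v))$ on $S^{n-1}$ turns the remaining spherical integral into $n\int_{S^{n-1}}g(v/h_K(v))\,d\widetilde{C}_q(K,v)$. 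This yields
$$\int_{\mathbb{R}^n}g(z)\,d\widetilde{C}_q(\|x\|_K,z)=n(2\pi)^{-\frac n2}2^{\frac{n-q}{2}-1}\Gamma\!\Big(\tfrac{n-q}{2}\Big)\int_{S^{n-1}}g\Big(\frac{v}{h_K(v)}\Big)\,d\widetilde{C}_q(K,v),$$
so $\widetilde{C}_q(\|x\|_K,\cdot)$ is a fixed dimensional multiple of the push-forward of $\widetilde{C}_q(K,\cdot)$ under $v\mapsto v/h_K(v)$, a homeomorphism of $S^{n-1}$ onto $\partial K^\circ$; under this identification the functional dual curvature measure of $\|x\|_K$ is exactly the $q$-th dual curvature measure of $K$, which is the claimed inclusion. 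The one genuinely non-formal point is the a.e.\ identity for $\nabla\|x\|_K$; the polar splitting, the $\Gamma$-evaluation, and the use of (\ref{the integral formal of q-dual curvature measure}) are mechanical and parallel the computation already carried out for $\widetilde{W}_{n-q}$.
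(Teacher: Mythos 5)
Your proposal is correct and follows essentially the same route as the paper: factor the Gaussian integral of $g(\nabla\|x\|_K)\|x\|_K^{-q}$ into a radial part, evaluated as $2^{\frac{n-q}{2}-1}\Gamma\!\left(\frac{n-q}{2}\right)$, and an angular part identified with $\widetilde{C}_q(K,\cdot)$ via the gradient formula $\nabla\|z\|_K=\nu_K(z)/h_K(\nu_K(z))$. The only cosmetic difference is that you integrate over $S^{n-1}$ using (\ref{the integral formal of q-dual curvature measure}) and name the push-forward map $v\mapsto v/h_K(v)$ explicitly, while the paper uses cone-measure coordinates over $\partial K$ and (\ref{the integral formal of q-dual curvature measure2}); these differ only by a change of variables.
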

\begin{proof}

Let $K\in \mathcal{K}_o^n$ be strictly convex  and $\varphi(x)=\|x\|_{K}$. Let $\overline{V}_K$ denote the normalized cone measure of $K$, which is given by
$$
d\overline{V}_{K}(z)=\frac{\langle z,\nu_{K}(z)\rangle}{nV(K)}d\mathcal{H}^{n-1}(z)\quad \text{for}\quad z\in \partial K.
$$
Here $V(K)$ denotes the $n$-dimensional volume of $K$. For $x\in \mathbb{R}^n$,  we write $x=rz$, with $z\in \partial K$,  then $dx=nV(K)r^{n-1}drd\overline{V}_{K}(z)$. Since the map $x\mapsto \nabla \|x\|_{K}$ is 0-homogeneous, hence
\begin{eqnarray*}\label{}
 &&\int_{\mathbb{R}^n}g(x)^{}d\widetilde{C}_q(\varphi,x)\nonumber \\
&&\quad\quad=\int_{\mathbb{R}^n}g(\nabla \varphi^{}(x))^{}\varphi(x)^{-q}d\gamma_n(x) \nonumber\\
 &&\quad\quad= (2\pi)^{-\frac{n}{2}}  \int_{\mathbb{R}^n}g(\nabla \|x\|_K)^{}\|x\|_{K}^{-q}e^{-\frac{|x|^{2}}{2}}dx  \nonumber\\
&&\quad\quad=n(2\pi)^{-\frac{n}{2}}V(K)\int_0^{\infty}\int_{\partial K}g(\nabla \|z\|_K)^{}r^{n-q-1}e^{-\frac{r^2|z|^{2}}{2}}d\overline{V}_{K}(z)dr \nonumber\\
 &&\quad\quad=(2\pi)^{-\frac{n}{2}}2^{\tfrac{n-q-2}{2}}nV(K)\int_0^{\infty}e^{-r}r^{\frac{n-q}{2}-1}dr\int_{\partial K}g(\nabla \|z\|_K)^{}|z|^{q-n}d\overline{V}_{K}(z) \nonumber\\
 &&\quad\quad=(2\pi)^{-\frac{n}{2}}2^{\tfrac{n-q-2}{2}}\int_0^{\infty}e^{-r}r^{\frac{n-q}{2}-1}dr\int_{\partial K}g(\nabla \|z\|_K)^{}|z|^{q-n}h_K(\nu_K(z))d\mathcal{H}^{n-1}(z).
\end{eqnarray*}
Because of (see, e.g., \cite{Schneider2014}),
$$
\nabla \|z\|_{K}=\frac{\nu_{K}(z)}{\|\nu_{K}(z)\|_{K^{\circ}}},
$$
 for $z\in\partial K$, then
\begin{eqnarray*}\label{}
 &&\int_{\mathbb{R}^n}g(x)^{}d\widetilde{C}_q(\varphi,x)\nonumber \\
 &&\quad\quad=(2\pi)^{-\frac{n}{2}}2^{\tfrac{n-q-2}{2}}\int_0^{\infty}e^{-r}r^{\frac{n-q}{2}-1}dr\int_{\partial K}g\left(\tfrac{\nu_{K}(z)}{\|\nu_{K}(z)\|_{K^{\circ}}}\right)^{}|z|^{q-n}h_K(\nu_K(z))d\mathcal{H}^{n-1}(z).
\end{eqnarray*}
This means that the Borel measure $\widetilde{C}_q(\|x\|_{K},\cdot)$ takes the following form: For any Borel subsets $I\subseteq [0,\infty)$ and $\Omega\subseteq \partial K$,
\begin{eqnarray*}\label{spacial case}
 \widetilde{C}_q(\varphi,I\times \Omega)=\nu_1(I)\nu_2(\Omega).
\end{eqnarray*}
The measure $\nu_1$ is independent of the choice of $K$ and it is a measure with density
$$
(2\pi)^{-\frac{n}{2}}2^{\tfrac{n-q-2}{2}}e^{-r}r^{\frac{n-q}{2}-1}.
$$
The measure $\nu_2$ is a measure on $\partial K$ which implies the $q$-th dual curvature measure, $\widetilde{C}_q(K,\cdot)$, of $K$. In fact,  if  $\mathcal{R}(x)=\frac{x}{|x|}$ for $x\in \mathbb{R}^n\setminus\{0\}$, then
$$
\mathcal{R}_*\left(\nu_2\right)
$$
is referred to as the dual curvature measure, $\widetilde{C}_q(K,\cdot)$, of $K$, and  $
\mathcal{R}_*\left(\nu_2\right)
$ is the push-forward of $\nu_2$ via $\mathcal{R}$. This shows that the  dual curvature measure of a convex body can be recovered from the functional  dual curvature measure of a particular convex function.
\end{proof}

The next lemma will be used later.

\begin{lemma}\label{Cordero-ErausquinKlartag's lemma 3} \emph{\cite[Lemma 3]{Cordero-ErausquinKlartag2015}}
Let $\varphi: \mathbb{R}^n\rightarrow [0,+\infty]$ be an  essentially-continuous, convex function. Fix a vector $0\neq \theta \in \mathbb{R}^n$ and let $H=\theta^{\perp}\subset \mathbb{R}^n$ be the hyperplane orthogonal to $\theta$. Then, for $\mathcal{H}^{n-1}$-almost every $y\in H$, the  function
\begin{eqnarray*}\label{}
\mathbb{R}&\rightarrow &\mathbb{R}\cup\{+\infty\} \\
t&\mapsto& \varphi(y+t\theta).
\end{eqnarray*}
is continuous on $\mathbb{R}$, and locally-Lipschitz in the interior of the interval in which it is finite.
\end{lemma}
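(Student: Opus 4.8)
The plan is to reduce the statement to the one‑dimensional behaviour of $\varphi$ along the lines parallel to $\theta$ and then use essential continuity to discard a null set of exceptional base points. Write $\mathbb{R}^n=H\oplus\mathbb{R}\theta$ and, for $y\in H$, set $g_y(t)=\varphi(y+t\theta)$; this is a non‑negative, lower semi‑continuous, convex function on $\mathbb{R}$, finite exactly on an interval $I_y$ (possibly empty, a point, or unbounded). The ``easy'' half is immediate from one‑dimensional convexity and uses no hypothesis beyond convexity: on the interior of $I_y$ the finite convex function $g_y$ is continuous and locally Lipschitz (on any compact subinterval of an open interval of finiteness its one‑sided derivatives are bounded, giving a Lipschitz constant), and on the interior of $\mathbb{R}\setminus\overline{I_y}$ it is identically $+\infty$, hence continuous there too. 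Consequently the only candidates for points where $g_y$ fails to be continuous are the at most two finite endpoints of $I_y$, and this already gives the ``locally‑Lipschitz in the interior of the interval in which it is finite'' assertion for \emph{every} $y$.

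Next I would classify, at a finite endpoint $a$ of $I_y$, exactly how continuity of $g_y$ can fail. Since $g_y\equiv+\infty$ just outside $\overline{I_y}$ and the one‑sided limit $L_a=\lim_{t\to a,\ t\in I_y}g_y(t)$ exists in $(0,+\infty]$ by monotonicity of the difference quotients of a convex function, continuity of $g_y$ at $a$ is equivalent to the two conditions $a\notin I_y$ (that is, $g_y(a)=+\infty$) and $L_a=+\infty$. Call $y\in H$ \emph{bad} if this equivalence fails at some finite endpoint $a$ of $I_y$. In either failure mode there is a sequence $t_k\to a$ along which $\varphi(y+t_k\theta)=g_y(t_k)$ stays bounded while $\varphi(y+a\theta)=+\infty$, or along which $g_y(t_k)=+\infty$ while $\varphi(y+a\theta)<+\infty$; in both cases $\varphi$ is discontinuous at the point $p_y:=y+a\theta$, because the restriction to any line through a point of continuity of $\varphi$ would be continuous at that point. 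Hence $p_y$ lies in the discontinuity set $D$ of $\varphi$, and if $\pi:\mathbb{R}^n\to H$ denotes the orthogonal projection onto $H=\theta^{\perp}$, then $\pi(p_y)=y$.

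This yields the inclusion
\begin{eqnarray*}
\{y\in H:\ y\ \text{bad}\}\subseteq \pi(D).
\end{eqnarray*}
Since $\pi$ is $1$‑Lipschitz, a Lipschitz map does not increase $(n-1)$‑dimensional Hausdorff measure, so $\mathcal{H}^{n-1}(\pi(D))\leq\mathcal{H}^{n-1}(D)=0$ by essential continuity of $\varphi$. Therefore the bad set is contained in an $\mathcal{H}^{n-1}$‑null set; that is, for $\mathcal{H}^{n-1}$‑almost every $y\in H$ every finite endpoint of $I_y$ is a point of continuity of $g_y$, and combined with the first paragraph the function $t\mapsto\varphi(y+t\theta)$ is then continuous on all of $\mathbb{R}$ and locally Lipschitz on the interior of $I_y$. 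The only delicate point is the endpoint bookkeeping in the second step — making sure one has enumerated every way continuity can break at a finite endpoint of $I_y$ and that each of them forces a genuine discontinuity of $\varphi$ at $y+a\theta$; once that is in place, the rest is the standard projection argument together with the behaviour of Hausdorff measure under $1$‑Lipschitz maps.
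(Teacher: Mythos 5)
Your proof is correct: the reduction to the (at most two) finite endpoints of the finiteness interval, the observation that a failure of continuity of $t\mapsto\varphi(y+t\theta)$ at such an endpoint forces a genuine discontinuity of $\varphi$ at the corresponding boundary point, and the conclusion via the fact that the $1$-Lipschitz projection onto $\theta^{\perp}$ cannot increase $\mathcal{H}^{n-1}$-measure of the null discontinuity set, is exactly the standard argument; the paper itself gives no proof but cites Cordero-Erausquin and Klartag (Lemma 3), whose proof follows the same projection strategy. The small slips (the one-sided limit lies in $[0,+\infty]$ rather than $(0,+\infty]$, the degenerate case where $I_y$ is a single point, and the fact that lower semi-continuity actually rules out one of your two failure modes) are cosmetic and do not affect the argument.
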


The support $\text{Supp}(\mu)$ of a measure $\mu$ in $\mathbb{R}^n$ is a closed set  that contains  all points  $x\in\mathbb{R}^n$ with $\mu(U)>0$ for any open set $U$ containing $x$.

\begin{lemma}\label{not supported in any hyperplane}
Let $\varphi: \mathbb{R}^n\rightarrow [0,+\infty]$ be  an  essentially-continuous, convex function and $q\leq0$.
If
$$
0<\int_{\mathbb{R}^n}\varphi^{1-q}(x)d\gamma_n(x)<+\infty \quad \text{and}\quad \varphi(0)=0,
$$
then the $q$-th dual curvature  measure of $\varphi$ is not supported in any hyperplane.
\end{lemma}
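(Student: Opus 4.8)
The plan is to argue by contradiction: suppose $\widetilde{C}_q(\varphi,\cdot)$ is supported in a hyperplane $H=\theta^{\perp}$ for some unit vector $\theta\in\mathbb{R}^n$. By the definition of the $q$-th dual curvature measure of $\varphi$ (Definition \ref{definition}), this means that $\nabla\varphi(x)\in H$ for $\gamma_n$-almost every $x$ with $\varphi(x)>0$ (more precisely, for almost every $x$ with respect to the measure $\varphi^{-q}\,d\gamma_n$, which since $q\le 0$ is absolutely continuous with respect to $\gamma_n$ on the set $\{\varphi>0\}$). Equivalently, $\langle\nabla\varphi(x),\theta\rangle=0$ for a.e.\ such $x$. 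Thus the directional derivative of $\varphi$ in the direction $\theta$ vanishes almost everywhere on the (full-measure, open) set where $\varphi$ is positive and differentiable.

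The key technical tool is Lemma \ref{Cordero-ErausquinKlartag's lemma 3}: for $\mathcal{H}^{n-1}$-almost every $y$ in the hyperplane $H=\theta^{\perp}$, the one-dimensional slice $t\mapsto\varphi(y+t\theta)$ is continuous on $\mathbb{R}$ and locally Lipschitz on the interior of the interval where it is finite. On such a slice, the function $g_y(t)=\varphi(y+t\theta)$ is convex, locally Lipschitz, hence absolutely continuous on compact subintervals, with $g_y'(t)=\langle\nabla\varphi(y+t\theta),\theta\rangle$ wherever $\varphi$ is differentiable. By Fubini, combining with the almost-everywhere vanishing of $\langle\nabla\varphi,\theta\rangle$ on $\{\varphi>0\}$, for $\mathcal{H}^{n-1}$-a.e.\ $y\in H$ we get $g_y'(t)=0$ for a.e.\ $t$ in the open set where $g_y>0$; by absolute continuity $g_y$ is constant on each component of that set, and by continuity $g_y$ is in fact constant on all of $\mathbb{R}$ on any slice that does not enter $\{\varphi=0\}$, while on slices meeting $\{\varphi=0\}$ convexity plus the constancy on $\{g_y>0\}$ forces $g_y\equiv 0$. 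In particular $\varphi(y+t\theta)=\varphi(y)$ for all $t$, for a.e.\ $y\in H$. Since $\varphi(0)=0$, taking slices through points near the origin (using essential continuity and $\varphi\ge 0$) forces $\varphi$ to vanish on a set of positive measure in each such slice, and then — propagating via convexity and the fact that $\varphi$ is constant along $\theta$ on a.e.\ slice — one concludes $\varphi\equiv 0$ almost everywhere, or at least that $\int_{\mathbb{R}^n}\varphi^{1-q}\,d\gamma_n=0$. This contradicts the hypothesis $\int_{\mathbb{R}^n}\varphi^{1-q}(x)\,d\gamma_n(x)>0$.

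The main obstacle I anticipate is the careful bookkeeping near the boundary $\{\varphi=0\}$: a convex non-negative function can vanish on a lower-dimensional face, and the slices $t\mapsto\varphi(y+t\theta)$ may be finite only on a proper subinterval, so one must combine Lemma \ref{Cordero-ErausquinKlartag's lemma 3} (which handles continuity and local Lipschitz behaviour precisely on that interval) with the convexity structure to rule out the possibility that $\varphi$ is ``flat in the $\theta$-direction'' only on $\{\varphi>0\}$ while still being a nonzero function. The cleanest route is: first establish $\langle\nabla\varphi,\theta\rangle=0$ a.e.\ on $\{\varphi>0\}$ from the support assumption and $q\le 0$; then fix a slice direction $\theta$ and apply Fubini over $H=\theta^{\perp}$ together with the lemma to get that a.e.\ slice of $\varphi$ is constant wherever positive; finally use $\varphi(0)=0$ and convexity to translate ``constant wherever positive on a.e.\ $\theta$-line'' into ``$\varphi$ vanishes $\gamma_n$-a.e.'', contradicting positivity of the total mass. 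One should also note at the outset that since $q\le 0$ and $0<\int\varphi^{1-q}\,d\gamma_n<\infty$, the measure $\widetilde{C}_q(\varphi,\cdot)$ is well-defined, finite, and nonzero, so the notion of its support being contained in a hyperplane is meaningful.
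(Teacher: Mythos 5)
Your first three steps are exactly the paper's own: from $\mathrm{Supp}(\widetilde{C}_q(\varphi,\cdot))\subseteq\theta^{\perp}$ one gets $\int_{\mathbb{R}^n}|\langle z,\theta\rangle|\,d\widetilde{C}_q(\varphi,z)=0$, hence $\varphi^{-q}\langle\nabla\varphi,\theta\rangle=0$ a.e., i.e. $\partial_t\big(\varphi^{1-q}\big)(y+t\theta)=0$ a.e., and then Lemma \ref{Cordero-ErausquinKlartag's lemma 3} plus Fubini gives that for $\mathcal{H}^{n-1}$-a.e. $y\in\theta^{\perp}$ the slice $t\mapsto\varphi(y+t\theta)$ is constant wherever it is positive and finite. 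The genuine gap is the endgame you sketch: ``constant along $\theta$ wherever positive'' together with $\varphi(0)=0$ does \emph{not} force $\varphi$ to vanish $\gamma_n$-a.e., and your claim that slices through points near the origin must meet $\{\varphi=0\}$ is false. Concretely, take $\varphi(x)=|x-\langle x,\theta\rangle\theta|$, the distance to the line $\mathbb{R}\theta$: it is convex, continuous, non-negative, $\varphi(0)=0$, $0<\int_{\mathbb{R}^n}\varphi^{1-q}d\gamma_n<\infty$, every $\theta$-slice is constant, and $\nabla\varphi\in\theta^{\perp}$ a.e., so the measure defined by (\ref{definition0}) is supported in $\theta^{\perp}$ while $\varphi\not\equiv0$; its zero set is a single line, so constancy in the direction $\theta$ propagates nothing from the origin, and no bookkeeping near $\{\varphi=0\}$ can close the argument as you set it up.

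What rescues the statement is the coercivity hidden in Definition \ref{definition}: $\widetilde{C}_q(\varphi,\cdot)$ is defined there only for $\varphi\in\mathcal{L}_o$, i.e. $\lim_{|x|\rightarrow+\infty}\varphi(x)=+\infty$, and the example above violates exactly this. With coercivity your argument finishes at the slice stage without ever using $\varphi(0)=0$: since $\int\varphi^{1-q}d\gamma_n<\infty$ we have $\varphi<+\infty$ a.e., so by Fubini for a.e. $y\in\theta^{\perp}$ the slice is finite a.e. in $t$, hence (being convex) finite on all of $\mathbb{R}$, and being constant there contradicts $\varphi(y+t\theta)\rightarrow+\infty$ as $|t|\rightarrow\infty$; one such $y$ already gives the contradiction. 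You should know that the paper's own proof makes the same unjustified leap (``$\varphi(0)=0$ implies $\varphi\equiv0$ \dots by Fubini''), so the flaw is inherited rather than invented; but as written your proposal, like that argument, only becomes correct once the hypothesis $\varphi\in\mathcal{L}_o$ (or some equivalent growth condition) is invoked and the contradiction is drawn from coercivity rather than from $\varphi(0)=0$.
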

\begin{proof}
Assume  that $\text{Supp}(\widetilde{C}_q(\varphi,\cdot))\subseteq \theta^{\perp}$ for a unit vector $\theta\in S^{n-1}$. Without loss of generality, assume  $\theta=e_n$, where $e_n=(0,\cdots,0,1)$. For $x\in \mathbb{R}^n$, we write $x=(y,t)$ with $y\in  \mathbb{R}^{n-1}$ and $t=x_n\in  \mathbb{R}^{}$. Suppose $\varphi(x)>0$ for $x\in\mathbb{R}^n$,  then

\begin{eqnarray*}\label{}
0&=&\int_{\mathbb{R}^n}|z_n|^{}d\widetilde{C}_q(\varphi,z)\\
&=& \int_{\mathbb{R}^n}\left|\frac{\partial\varphi(x)}{\partial x_n}\right|^{}\varphi(x)^{-q}d\gamma_n(x)\\
&=&\frac{1}{1-q}\int_{\mathbb{R}^n}\left|\frac{\partial \varphi^{1-q}(x)}{\partial x_n}\right|^{}d\gamma_n(x).
\end{eqnarray*}
For almost every $(y,t)\in \mathbb{R}^n$,
\begin{eqnarray*}\label{}
\frac{\partial \varphi^{1-q}(y,t)}{\partial t}=0.
\end{eqnarray*}
Since $q\leq0$, the function $\varphi^{1-q}$ is also convex.  According to Lemma \ref{Cordero-ErausquinKlartag's lemma 3}, for almost any $y\in \mathbb{R}^{n-1}$, the function $t\rightarrow \varphi^{1-q}(y,t)$ is continuous in $\mathbb{R}$ and locally-Lipschitz in the interior of the interval $\{t: \varphi(y,t)<+\infty\}$. Therefore, for almost any $y\in\mathbb{R}^{n-1}$, the convex function $t\rightarrow \varphi^{1-q}(y,t)$  is constant in $\mathbb{R}$.  The condition $\varphi(0)=0$ implies that $\varphi\equiv0$ in $\mathbb{R}$ and so that the integral
$\int \varphi^{1-q}(y,t)e^{-\frac{t^2}{2}}dt$  equals to $0$. By Fubini theorem, the function $\varphi^{1-q}$ can not have a finite, non-zero integral. This is a contradiction.
\end{proof}

From Jensen's inequality we know that
$$
-q\mapsto\left(\int_{\mathbb{R}^n}\varphi(x)^{-q}d\gamma_n(x)\right)^{-\frac{1}{q}}
$$
is strictly monotone  increasing, unless $\varphi$ is constant on $\mathbb{R}^n$. Hence,  the condition in Lemma \ref{not supported in any hyperplane}
includes the assumption  in the Definition \ref{definition} when $q\leq0$.

\vskip 0.2cm

A function $Z$ defined on a lattice $(\mathcal{C},\vee,\wedge)$ and taking values in an abelian semi-group is called
a \emph{valuation} if
\begin{eqnarray}\label{definition of valuation}
Z(f\vee g)+Z(f\wedge g)=Z(f)+Z(g),
\end{eqnarray}
for all $f,g\in \mathcal{C}$. A function $Z$ defined on a set $\mathcal{S} \subset\mathcal{C}$ is called a valuation if (\ref{definition of valuation}) holds whenever
 $f,g,f\vee g,f\wedge g\in \mathcal{S}$.

  For $\mathcal{S}$ the space of convex bodies, $\mathcal{K}^n$, in $\mathbb{R}^n$ with $\vee$ denoting union and $\wedge$ intersection, the notion of valuation is classical. The classical valuation played a critical role in Dehn's solution of Hilbert's Third Problem
and have been a central focus in convex geometric analysis. In the 1930s, Blaschke   classified  the real-valued valuations on convex bodies that are ${\rm SL}(n)$ invariant. Blaschke's work was greatly extended by Hadwiger in his famous classification of continuous, rigid motion invariant valuations  and characterization of elementary mixed volumes.

In \cite{HuangLYZ}, authors proved that  dual curvature measures are valuations, i.e.,  $$
\widetilde{C}_q(K,\cdot)+\widetilde{C}_q(L,\cdot)=\widetilde{C}_q(K\cap L,\cdot)+\widetilde{C}_q(K\cup L,\cdot),
$$
for each $K,L\in \mathcal{K}_o^n$ such that $K\cup L\in \mathcal{K}_o^n$.

Recently, valuations were defined on function spaces. For a space $\mathcal{S}$ of real-valued
functions we denote by $f \vee g$ the pointwise maximum of $f$ and $g$ while $f \wedge g$ denotes their
pointwise minimum. The characteristic functions  $\chi_K,\chi_L$ of convex bodies  $K$  and $L$ satisfy
$$
\chi_{K\cup L}=\max\{\chi_K,\chi_L\}  \quad\text{and}\quad  \chi_{K\cap L}=\min\{\chi_K,\chi_L\},
$$
for all $K,L\in\mathcal{K}_o^n$ such that $K\cup L\in\mathcal{K}_o^n$. Valuations on function spaces can be seen as a generalization of valuations on $\mathcal{K}_o^n$.
For the classification of valuations on  function spaces one can refer  to \cite{ColesantiLombardiParapatits,ColesantiLudwigMussnig1,ColesantiLudwigMussnig2,Ludwig2011,Ludwig2012}.

We are ready to prove  that the $q$-th dual curvature measure $\widetilde{C}_q(\varphi,\cdot)$ is  a valuation.

\begin{lemma}
Let $\varphi,\psi \in \mathcal{L}_o$, and $q\in \mathbb{R}$. If $\min\{\varphi,\psi\}$ is convex, then
\begin{eqnarray}\label{valuation property}
\widetilde{C}_q(\varphi,\cdot)+\widetilde{C}_q(\psi,\cdot)=
\widetilde{C}_q(\min\{\varphi,\psi\},\cdot)
+\widetilde{C}_q(\max\{\varphi,\psi\},\cdot).
\end{eqnarray}
\end{lemma}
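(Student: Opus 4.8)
The plan is to reduce the valuation identity to a pointwise identity for the integrands in the defining formula \eqref{definition0}. Fix a bounded continuous function $g:\mathbb{R}^n\to\mathbb{R}$. By Definition \ref{definition}, for each of the four functions $\varphi$, $\psi$, $\min\{\varphi,\psi\}$ and $\max\{\varphi,\psi\}$ the quantity $\int_{\mathbb{R}^n}g(z)\,d\widetilde{C}_q(\cdot,z)$ is an integral against $d\gamma_n$ of $g(\nabla(\cdot)(x))\,(\cdot)(x)^{-q}$. So it suffices to show that, for $\gamma_n$-almost every $x\in\mathbb{R}^n$,
\begin{eqnarray}\label{pointwise valuation}
&&g(\nabla\varphi(x))\varphi(x)^{-q}+g(\nabla\psi(x))\psi(x)^{-q}\nonumber\\
&&\quad=g(\nabla u(x))u(x)^{-q}+g(\nabla v(x))v(x)^{-q},
\end{eqnarray}
where $u=\min\{\varphi,\psi\}$ and $v=\max\{\varphi,\psi\}$, and then integrate. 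Since a convex function is differentiable almost everywhere (and all four functions here are convex — $u$ by hypothesis, $v=\varphi+\psi-u$ as a difference which is convex since $u$ is, the other two by assumption), the gradients appearing in \eqref{pointwise valuation} exist a.e.

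The key step is the pointwise analysis. Partition $\mathbb{R}^n$ (up to a null set) into $A=\{x:\varphi(x)<\psi(x)\}$, $B=\{x:\varphi(x)>\psi(x)\}$, and $E=\{x:\varphi(x)=\psi(x)\}$. On the open-in-domain set $A$, in a neighbourhood of such a point $u=\varphi$ and $v=\psi$, hence $\nabla u=\nabla\varphi$, $\nabla v=\nabla\psi$, and \eqref{pointwise valuation} is a trivial rearrangement; symmetrically on $B$. The only delicate locus is $E$. I would show that for a.e.\ $x\in E$ one in fact has $\nabla\varphi(x)=\nabla\psi(x)$: indeed where $\varphi$, $\psi$ and $u$ are all differentiable, $u\le\varphi$ with equality at $x$ forces $\nabla u(x)=\nabla\varphi(x)$ (the difference $\varphi-u$ is nonnegative, convex, and attains a minimum at $x$, so its gradient vanishes there), and likewise $\nabla u(x)=\nabla\psi(x)$; the same argument applied to $v\ge\varphi$, $v\ge\psi$ gives $\nabla v(x)=\nabla\varphi(x)=\nabla\psi(x)$. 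On $E$ we also have $\varphi(x)=\psi(x)=u(x)=v(x)$, so all four terms of \eqref{pointwise valuation} are equal to $g(\nabla\varphi(x))\varphi(x)^{-q}$ each, and both sides equal $2g(\nabla\varphi(x))\varphi(x)^{-q}$. This disposes of the remaining case.

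Having \eqref{pointwise valuation} a.e., integrating against $d\gamma_n$ and using Definition \ref{definition} for all four measures yields
$$
\int_{\mathbb{R}^n}g\,d\widetilde{C}_q(\varphi,\cdot)+\int_{\mathbb{R}^n}g\,d\widetilde{C}_q(\psi,\cdot)=\int_{\mathbb{R}^n}g\,d\widetilde{C}_q(u,\cdot)+\int_{\mathbb{R}^n}g\,d\widetilde{C}_q(v,\cdot)
$$
for every bounded continuous $g$, which by the Riesz representation / uniqueness of Borel measures tested against $C_b(\mathbb{R}^n)$ gives \eqref{valuation property}. One should check along the way that each of the four integrals is finite — this follows once $0<\int\varphi^{-q}\,d\gamma_n<\infty$ and $0<\int\psi^{-q}\,d\gamma_n<\infty$ are assumed (implicit in invoking Definition \ref{definition}), since $u^{-q}\le\varphi^{-q}+\psi^{-q}$ pointwise for $q\le 0$ while $v^{-q}=\varphi^{-q}+\psi^{-q}-u^{-q}$; for general $q$ one instead argues directly that the identity holds as an equality of measures on sets where the splitting is valid. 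The main obstacle is the behaviour on the coincidence set $E$: one must be careful that $E$ need not be negligible and that differentiability of $u$ and $v$ there is not automatic pointwise, only a.e., so the gradient-matching argument must be phrased in terms of the full-measure set where $\varphi,\psi,u,v$ are simultaneously differentiable.
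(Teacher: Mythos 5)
Your proof is correct and follows essentially the same route as the paper: identify the gradients of $\min\{\varphi,\psi\}$ and $\max\{\varphi,\psi\}$ with those of $\varphi$ and $\psi$ almost everywhere (splitting into the regions $\{\varphi<\psi\}$, $\{\varphi>\psi\}$ and the coincidence set, where you additionally justify $\nabla\varphi=\nabla\psi$ a.e., which the paper merely asserts), and then rearrange the defining integrals against a bounded continuous test function. The only blemishes are the parenthetical claims that $v=\varphi+\psi-u$ is convex ``since $u$ is'' and that $\varphi-u$ is convex — neither is a valid deduction, but both are harmless, since $\max\{\varphi,\psi\}$ is convex simply as a maximum of convex functions and the vanishing-gradient step needs only differentiability at an interior minimum, not convexity.
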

\begin{proof}
If $\min\{\varphi,\psi\}$ is convex, then for almost every $x\in \mathbb{R}^n$,
\begin{eqnarray*}
\nabla(\max\{\varphi,\psi\})(x)=\left\{\begin{array}{cccc}
\nabla \varphi(x)             &\text{when}&  \varphi(x)>\psi(x) \\
\nabla \psi(x)             & \text{when}&  \varphi(x)<\psi(x)\\
\nabla \varphi(x)=\nabla \psi(x) & \text{when}&  \varphi(x)=\psi(x)
\end{array} \right.,
\end{eqnarray*}
and
\begin{eqnarray*}
\nabla(\min\{\varphi,\psi\})(x)=\left\{\begin{array}{cccc}
\nabla \varphi(x)             & \text{when}& \varphi(x)<\psi(x) \\
\nabla \psi(x)             & \text{when}&  \varphi(x)>\psi(x)\\
\nabla \varphi(x)=\nabla \psi(x) & \text{when}&  \varphi(x)=\psi(x)
\end{array} \right..
\end{eqnarray*}
  Therefore,
\begin{eqnarray*}\label{}
&&\int_{\mathbb{R}^n}g(y)d\widetilde{C}_q(\max\{\varphi,\psi\},y)+ \int_{\mathbb{R}^n}g(y)d\widetilde{C}_q(\min\{\varphi,\psi\}, y)\\
&&\quad =\int_{\mathbb{R}^n}g( \nabla(\max\{\varphi,\psi\})(y))(\max\{\varphi,\psi\})^{-q}d\gamma_n(y)\\
&&\quad\quad\quad+\int_{\mathbb{R}^n}g( \nabla(\min\{\varphi,\psi\})(y))(\min\{\varphi,\psi\})^{-q}d\gamma_n(y)\\
&&\quad =\int_{\mathbb{R}^n \cap \{\varphi\geq \psi\}}g( \nabla(\max\{\varphi,\psi\})(y))(\max\{\varphi,\psi\})^{-q}d\gamma_n(y)\\
&&\quad\quad\quad +\int_{\mathbb{R}^n \cap \{\varphi< \psi\}}g( \nabla(\max\{\varphi,\psi\})(y))(\max\{\varphi,\psi\})^{-q}d\gamma_n(y)\\
&&\quad\quad\quad + \int_{\mathbb{R}^n \cap \{\varphi\geq \psi\}}g( \nabla(\min\{\varphi,\psi\})(y))(\min\{\varphi,\psi\})^{-q}d\gamma_n(y)\\
&&\quad\quad\quad+\int_{\mathbb{R}^n \cap \{\varphi< \psi\}}g( \nabla(\min\{\varphi,\psi\})(y))(\min\{\varphi,\psi\})^{-q}d\gamma_n(y)\\
&&\quad =\int_{\mathbb{R}^n \cap \{\varphi\geq \psi\}}g(\nabla\varphi(y))\varphi^{-q}d\gamma_n(y)+\int_{\mathbb{R}^n \cap \{\varphi< \psi\}}g( \nabla\psi)(y))\psi^{-q}d\gamma_n(y)\\
&&\quad\quad\quad + \int_{\mathbb{R}^n \cap \{\varphi\geq \psi\}}g( \nabla\psi(y))\psi^{-q}d\gamma_n(y)+\int_{\mathbb{R}^n \cap \{\varphi< \psi\}}g( \nabla\varphi(y))\varphi^{-q}d\gamma_n(y)\\
&&\quad =\int_{\mathbb{R}^n}g(y)d\widetilde{C}_q(\varphi,y)+\int_{\mathbb{R}^n }g( y)\psi^{-q}d\widetilde{C}_q(\psi,y),
\end{eqnarray*}
for all continuous integrable function $g$. Hence, we obtain the desired valuation property (\ref{valuation property}).
\end{proof}

\vskip 0.3cm

\section{Dual Minkowski problems for convex functions}\label{Dual Minkowski problem for convex functions}
~

\vskip 0.3cm

\subsection{A minimization problem}
~~\vskip 0.3cm

We write $L_1(\mu)$ to denotes the class of  $\mu$-integrable functions $f: \mathbb{R}^n\rightarrow  [0,\infty]$, i.e.,
\begin{eqnarray*}
L_1(\mu)=\left\{f: \int_{\mathbb{R}^n}f(x)d\mu(x)<+\infty\right\}.
\end{eqnarray*}
 Let $\mu$ be a finite Borel measure on $\mathbb{R}^n$ and $q\in \mathbb{R}$. We set
\begin{eqnarray}
\Psi_{\mu}(f)&=&\frac{1}{|\mu|}\int_{\mathbb{R}^n}f(x)d\mu(x)
-e^{-\overline{W}_{n+1-q}(f^*)},
\end{eqnarray}
for $f\in L_1(\mu)$ with $f(0)=0$.
Here $|\mu|$ denotes the total measure of $\mu$ and  $\overline{W}_{n+1-q}(f^*)$ is the normalized dual  quermassintegral, namely,
$$
\overline{W}_{n+1-q}(f^*)
=\left[\int_{\mathbb{R}^n}(f^*(x))^{1-q}d\gamma_n(x)\right]^{\frac{1}{1-q}}.
$$
We should point out  that the restriction $f(0)=0$ guarantees that $f^*(x)\geq0$, since (\ref{elementary property of Fenchel conjugate}) and the fact $f^{**}\leq f$.

We consider the minimization  problem,
\begin{eqnarray}\label{minimization  problem}
\inf_f\Big\{ \Psi_{\mu}(f):   0<\overline{W}_{n+1-q}(f^*)<\infty,f\in L_1(\mu)\ \text{and} \ f(0)=0\Big\}.
\end{eqnarray}

 If  the convex function $\varphi_0$ solves the infimum in \eqref{minimization  problem}, then the pre-given measure $\mu$  is exactly  the $q$-th dual curvature of $\varphi_0^*$. We need the following result which  has been provided in \cite{BermanBerndtsson2013} and a special case  one can found in \cite{Colesanti}. For a complete proof one can see in the recent paper \cite{Rot20}.

\begin{lemma}\label{Berman's formula}
Let $\varphi$ be a  lower semi-continuous convex function in $\mathbb{R}^n$  and let $g$ be a bounded continuous function  in $\mathbb{R}^n$. If $\varphi_t(x)=\varphi^{}(x)+tg(x)$ for sufficiently small  $t$ (i.e.,  $|t|<a$ for sufficiently small  $a$) and $x\in \mathbb{R}^n$, then
$$
\frac{d}{dt}\varphi_t^*(x)\Big |_{t=0}=-g(\nabla\varphi^*(x))
$$
at any point $x\in \mathbb{R}^n$ in which $\varphi^*$ is differentiable.
\end{lemma}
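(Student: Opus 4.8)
The plan is to exploit the variational characterisation of the Legendre transform. Fix a point $x$ at which $\varphi^*$ is differentiable and set $h(y):=\langle x,y\rangle-\varphi(y)$, an upper semicontinuous concave function of $y$. Since $\varphi$ is proper, lower semicontinuous and convex, $\varphi^{**}=\varphi$, so $y\in\partial\varphi^*(x)$ if and only if $\varphi(y)+\varphi^*(x)=\langle x,y\rangle$, that is, if and only if $y$ maximises $h$; consequently the differentiability of $\varphi^*$ at $x$ forces $h$ to have a \emph{unique} maximiser, namely $y_0=\nabla\varphi^*(x)$, with $h(y_0)=\varphi^*(x)$. Moreover a convex function can be differentiable only at interior points of its domain, so $x\in\mathrm{int}\,\mathrm{dom}\,\varphi^*$, and the usual duality between $\mathrm{dom}\,\varphi^*$ and the growth of $\varphi$ makes $h$ coercive: there are $C\in\mathbb{R}$ and $\delta>0$ with $h(y)\le C-\delta|y|$ for all $y$.

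First I would record the elementary uniform estimate $|\varphi_t^*(x)-\varphi^*(x)|\le|t|\,\|g\|_\infty$, obtained by taking suprema over $y$ in $|(\varphi(y)+tg(y))-\varphi(y)|=|t|\,|g(y)|\le|t|\,\|g\|_\infty$; in particular $\varphi_t^*(x)$ is finite for $|t|<a$ and $\varphi_t^*(x)\to\varphi^*(x)$ as $t\to0$. Evaluating the supremum defining $\varphi_t^*(x)$ at $y=y_0$ gives the one-sided bound $\varphi_t^*(x)\ge h(y_0)-tg(y_0)=\varphi^*(x)-tg(y_0)$ for every admissible $t$.

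For the opposite bound I would use the coercivity. Because $y\mapsto h(y)-tg(y)$ is upper semicontinuous and tends to $-\infty$ as $|y|\to\infty$, uniformly for $|t|<a$ (as $|tg|\le a\|g\|_\infty$), it attains its supremum at some $y_t$; and the estimate above forces $h(y_t)\ge\varphi^*(x)-2a\|g\|_\infty$, so all $y_t$ with $|t|<a$ lie in one fixed ball. Letting $t\to0$, from $\varphi_t^*(x)\to\varphi^*(x)$ and the boundedness of $g$ we get $h(y_t)=\varphi_t^*(x)+tg(y_t)\to\varphi^*(x)$; by compactness any cluster point $y_*$ of $(y_t)$ satisfies, by upper semicontinuity of $h$, $h(y_*)\ge\varphi^*(x)$, hence $h(y_*)=\varphi^*(x)$, so $y_*$ maximises $h$ and therefore $y_*=y_0$. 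Thus $y_t\to y_0$, and since $h(y_t)\le\varphi^*(x)$ we obtain $\varphi_t^*(x)=h(y_t)-tg(y_t)\le\varphi^*(x)-tg(y_t)$.

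Combining the two bounds gives $-tg(y_0)\le\varphi_t^*(x)-\varphi^*(x)\le-tg(y_t)$ for $0\le t<a$, with the inequalities reversed for $-a<t\le0$; dividing by $t$ and using $g(y_t)\to g(y_0)$ (continuity of $g$ together with $y_t\to y_0$) yields $\frac{d}{dt}\varphi_t^*(x)\big|_{t=0}=-g(y_0)=-g(\nabla\varphi^*(x))$. The one genuinely delicate point is the convergence $y_t\to y_0$ of the (near-)maximisers, for which both the coercivity of $h$ — guaranteed by $x\in\mathrm{int}\,\mathrm{dom}\,\varphi^*$ — and the uniqueness of the maximiser of the unperturbed problem are indispensable; everything else is bookkeeping around the uniform bound $|\varphi_t^*(x)-\varphi^*(x)|\le|t|\,\|g\|_\infty$.
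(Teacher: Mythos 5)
Your proof is correct. Note that the paper itself does not prove this lemma at all: it is quoted from the literature, with the statement attributed to Berman--Berndtsson, a special case to Colesanti--Fragal\`a, and the reader referred to Rotem's paper for a complete proof. So there is no internal argument to compare against; what you give is a self-contained Danskin-type (maximum-theorem) argument, and it holds together. The key steps all check out: differentiability of $\varphi^*$ at $x$ forces $x\in\mathrm{int}\,\mathrm{dom}\,\varphi^*$ and $\partial\varphi^*(x)=\{\nabla\varphi^*(x)\}$, which (via $\varphi^{**}=\varphi$) gives the unique maximiser $y_0$ of $h$; local boundedness of $\varphi^*$ on a ball $\bar B(x,\delta)$ yields the coercivity bound $h(y)\le C-\delta|y|$; the uniform estimate $|\varphi_t^*(x)-\varphi^*(x)|\le |t|\,\|g\|_\infty$ plus coercivity traps the perturbed maximisers $y_t$ in a fixed ball, and upper semicontinuity of $h$ together with uniqueness of $y_0$ forces $y_t\to y_0$, after which the two-sided bound and continuity of $g$ give the squeeze. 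One small slip of phrasing: the chain $-tg(y_0)\le\varphi_t^*(x)-\varphi^*(x)\le-tg(y_t)$ is valid for \emph{all} $|t|<a$, not just $t\ge 0$; it is only after dividing by a negative $t$ that the inequalities on the difference quotient flip. This does not affect the conclusion, since both ends converge to $-g(y_0)$. It would also do no harm to spell out the coercivity estimate (one line: $M\ge\varphi^*(x+\delta u)\ge h(y)+\delta|y|$ with $u=y/|y|$), which you currently invoke as ``the usual duality''.
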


The next shows that if $\varphi_0$ solves the optimal problem \eqref{minimization  problem} then  the pre-given measure $\mu$ is the $q$-th dual curvature measure of  $\varphi^*_0$. 

\begin{lemma}\label{Existence}
Let  $\mu$ be a finite Borel measure on $\mathbb{R}^n$ that is not supported in a lower dimensional subspace with $\int_{\mathbb{R}^n}|x|d\mu(x)<\infty$ and $q\leq0$. If there exists a lower semi-continuous and convex  $\mu$-integrable function $\varphi_0: \mathbb{R}^n\rightarrow \mathbb{R}$ such that
\begin{eqnarray*}
\Psi_{\mu}(\varphi_0)=\inf_f\left\{ \Psi_{\mu}(f): f\in L_1(\mu), 0<\overline{W}_{n+1-q}(f^*)<\infty,   f(0)=0 \right\},
\end{eqnarray*}
then
$$
\mu=\tau\widetilde{C}_q(\varphi_0^*, \cdot),
$$
with $\tau=\frac{|\mu|}{\widetilde{W}_{n-q}(\varphi_0^*)}$.
\end{lemma}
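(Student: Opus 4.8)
The plan is to use a standard variational (Lagrange-multiplier-type) argument. Suppose $\varphi_0$ minimizes $\Psi_\mu$ over the admissible class. I would fix an arbitrary bounded continuous function $g:\mathbb{R}^n\to\mathbb{R}$ with $g(0)=0$ and consider the perturbation $\varphi_t=\varphi_0+tg$ for $|t|$ small. Since $\varphi_0$ is lower semi-continuous and convex and $g$ is bounded, $\varphi_t$ is still convex (for this one perturbs in the primal variable) and $\mu$-integrable, and $\varphi_t(0)=0$; moreover $\overline W_{n+1-q}(\varphi_t^*)$ stays finite and positive for small $t$ because $\varphi_t^*$ differs from $\varphi_0^*$ by a bounded amount (indeed $|\varphi_t^* - \varphi_0^*|\le |t|\,\|g\|_\infty$). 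Hence $\varphi_t$ is admissible and $h(t):=\Psi_\mu(\varphi_t)$ has a minimum at $t=0$, so $h'(0)=0$ provided the derivative exists.

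Next I would compute $h'(0)$. The linear term is immediate: $\frac{d}{dt}\frac{1}{|\mu|}\int_{\mathbb{R}^n}\varphi_t\,d\mu = \frac{1}{|\mu|}\int_{\mathbb{R}^n} g\,d\mu$. For the second term, write $F(t)=\int_{\mathbb{R}^n}(\varphi_t^*(x))^{1-q}d\gamma_n(x)$, so that the term is $-e^{-F(t)^{1/(1-q)}}$. By Lemma \ref{Berman's formula}, at every point $x$ where $\varphi_0^*$ is differentiable (which is $\gamma_n$-a.e.\ $x$), $\frac{d}{dt}\varphi_t^*(x)\big|_{t=0}=-g(\nabla\varphi_0^*(x))$, hence $\frac{d}{dt}(\varphi_t^*)^{1-q}\big|_{t=0}=-(1-q)(\varphi_0^*)^{-q}g(\nabla\varphi_0^*)$. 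To differentiate under the integral sign I would dominate the difference quotients: using convexity of $t\mapsto\varphi_t^*(x)^{1-q}$ (valid since $q\le 0$) together with the two-sided bound $|\varphi_t^*-\varphi_0^*|\le|t|\|g\|_\infty$, the difference quotients are controlled by a fixed $\gamma_n$-integrable function built from $(\varphi_0^*\pm\|g\|_\infty)^{1-q}$, and one invokes dominated convergence. This gives
$$
F'(0)=-(1-q)\int_{\mathbb{R}^n}(\varphi_0^*(x))^{-q}g(\nabla\varphi_0^*(x))\,d\gamma_n(x)=-(1-q)\int_{\mathbb{R}^n}g(z)\,d\widetilde C_{q}(\varphi_0^*,z),
$$
where the last equality is exactly Definition \ref{definition}. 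Differentiating $-e^{-F(t)^{1/(1-q)}}$ by the chain rule and combining with the linear term, the stationarity condition $h'(0)=0$ becomes
$$
\frac{1}{|\mu|}\int_{\mathbb{R}^n}g\,d\mu=\frac{e^{-F(0)^{1/(1-q)}}}{1-q}\,F(0)^{\frac{q}{1-q}}\,F'(0)\cdot(-1)
= c\int_{\mathbb{R}^n}g\,d\widetilde C_{q}(\varphi_0^*,z),
$$
for a positive constant $c$ depending only on $F(0)=\widetilde W_{n+1-q}(\varphi_0^*)$ and $\overline W_{n+1-q}(\varphi_0^*)$. Since $g$ is an arbitrary bounded continuous function vanishing at the origin (and both measures are finite with the relevant first moments, so the measures are determined by integration against such $g$), this yields $\mu=\tau\,\widetilde C_q(\varphi_0^*,\cdot)$ with $\tau=c^{-1}|\mu|$; a short bookkeeping of the constants, using that the total mass of $\widetilde C_q(\varphi_0^*,\cdot)$ is $\widetilde W_{n-q}(\varphi_0^*)$, identifies $\tau=|\mu|/\widetilde W_{n-q}(\varphi_0^*)$.

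The main obstacle is the rigorous differentiation under the integral sign for $F(t)$: one must produce a single $\gamma_n$-integrable dominating function for the difference quotients of $(\varphi_t^*)^{1-q}$ uniformly in small $t$. The hypotheses $q\le 0$, $\varphi_0(0)=0$ (so $\varphi_0^*\ge 0$ and the powers behave), $\varphi_0\in L_1(\mu)$ together with $\int|x|\,d\mu<\infty$, the finiteness of $\overline W_{n+1-q}(\varphi_0^*)$, and the boundedness of $g$ are precisely what make this work, much as in the proof of Theorem \ref{integral formula}; I would also need the elementary fact that $\varphi_0^*$ is differentiable $\gamma_n$-a.e.\ (convex functions are differentiable Lebesgue-a.e., and $\gamma_n\ll$ Lebesgue) so Lemma \ref{Berman's formula} applies on a full-measure set. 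A secondary point is checking that the perturbed functions genuinely remain in the admissible class, in particular that $\overline W_{n+1-q}(\varphi_t^*)$ stays strictly positive, which follows since it cannot drop below $\overline W_{n+1-q}((\varphi_0^*+\|g\|_\infty \cdot\mathbf 1)^{\downarrow})>0$; here one uses that $\varphi_0^*$ is not $\gamma_n$-a.e.\ constant, a consequence of $\mu$ not being supported in a lower-dimensional subspace.
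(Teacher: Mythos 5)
Your proposal follows essentially the same route as the paper: perturb the minimizer by $\varphi_t=\varphi_0+tg$ for bounded continuous $g$ with $g(0)=0$, use the two-sided bound $|\varphi_t^*-\varphi_0^*|\le |t|\|g\|_\infty$ to keep the perturbation admissible, differentiate via Lemma \ref{Berman's formula} and dominated convergence, set the first variation to zero, and identify $\tau$ by comparing total masses using $\widetilde{C}_q(\varphi_0^*,\mathbb{R}^n)=\widetilde{W}_{n-q}(\varphi_0^*)$. One inessential slip: your remark that $\varphi_t$ ``is still convex'' is false in general (adding $tg$ destroys convexity), but it is also unnecessary, since the admissible class in the minimization problem does not require convexity and neither Lemma \ref{Berman's formula} nor the rest of your argument uses it.
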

\begin{proof}
 We assume that there exists a  $\varphi_0$ with $0<\overline{W}_{n+1-q}(\varphi_0^*)<\infty$ such that it is a minimizer for $\Psi_{\mu}(f)$ , i.e.,
\begin{eqnarray}\label{mini problem1}
\Psi_{\mu}(\varphi_0)=\inf_f\left\{ \Psi_{\mu}(f):   f\in L_1(\mu), 0<\overline{W}_{n+1-q}(f^*)<\infty,  f(0)=0 \right\}.
\end{eqnarray}
For any $\mu$-integrable continuous, bounded  function $g:\mathbb{R}^n\rightarrow \mathbb{R}$ with $g(0)=0$, we set $\varphi_{t}(x)=\varphi_0(x)+tg(x)^{}$.    Since $g$ is bounded, if   $|g|<M$ then  we can select a $t_0>0$ such that 
\begin{eqnarray}\label{bound}
\varphi_0-t_0M <\varphi_t<\varphi_0+t_0M ,
\end{eqnarray} and     
\begin{eqnarray*}\label{ }
\frac{1}{|\mu|}\int_{\mathbb{R}^n } \varphi_t(x)d\mu(x)
\leq\frac{1}{|\mu|}\int_{\mathbb{R}^n } \varphi_0(x)d\mu(x)+t_0M,
\end{eqnarray*}
for all $t\in[-t_0,t_0]$. 
This means that there exists $t_0>0$ such that $\varphi_t$ is $\mu$-integrable  on $\mathbb{R}^n $  and $\varphi_t(0)=0$ for all $t\in[-t_0,t_0]$. Inequalities  \eqref{bound} imply that 
\begin{eqnarray}\label{}
\varphi_0^*-t_0M <\varphi_t^*<\varphi_0^*+t_0M,
\end{eqnarray}
for all $t\in[-t_0,t_0]$.  Hence, the Minkowski inequality deduces 
\begin{eqnarray*}
 \overline{W}_{n+1-q}(\varphi_t^*) 
\leq\overline{W}_{n+1-q}(\varphi_0^*) +t_0M<+\infty. \end{eqnarray*}
Because of $\varphi_0$ is a minimizer of the optimal problem \eqref{mini problem1}, 
\begin{eqnarray*}\label{}
\Psi_{\mu}(\varphi_0)\leq\Psi_{\mu}(\varphi_t)\quad\text{for all}\quad t\in[-t_0,t_0].
\end{eqnarray*}
Hence, 
\begin{eqnarray*}\label{}
\frac{d}{dt}\Psi_{\mu}(\varphi_t)\Big|_{t=0}=0. 
\end{eqnarray*}

  By Lemma \ref{Berman's formula} deduces
$$
\frac{d}{dt}\varphi_{t}^*(x)\Big |_{t=0}=-g(\nabla\varphi_0^*(x))
$$
at any point $x\in \mathbb{R}^n$ in which $\varphi_0^*$ is differentiable.   By the dominated convergence theorem,  we have
\begin{eqnarray*}\label{}
\frac{d}{dt}\Psi_{\mu}(\varphi_t)\Big|_{t=0}&=&\frac{d}{dt}\left(\frac{1}{|\mu|}\int_{\mathbb{R}^n}\varphi_t(x)d\mu(x)\right)\Big|_{t=0}
-\frac{d}{dt}\left(e^{-\overline{W}_{n+1-q}(\varphi_t^*)}\right)\Big|_{t=0} \\
&=&\frac{1}{|\mu|}\int_{\mathbb{R}^n}g(x)d\mu(x)\\
&&-\overline{W}_{n+1-q}(\varphi_0^*)^qe^{-\overline{W}_{n+1-q}(\varphi_0^*)}\int_{\mathbb{R}^n}g(\nabla\varphi_{0}^*(x))(\varphi_{0}^*(x))^{-q}d\gamma_n(x),\end{eqnarray*}
for any   $\mu$-integrable continuous, bounded  function $g:\mathbb{R}^n\rightarrow \mathbb{R}$.  This means 
\begin{eqnarray}\label{12}
 \mu(\cdot)=
 |\mu|\overline{W}_{n+1-q}(\varphi_0^*)^qe^{-\overline{W}_{n+1-q}(\varphi_0^*)}\widetilde{C}_q(\varphi_0^*, \cdot).\end{eqnarray}
 By taking the integration from both sides of \eqref{12}, one sees that 
 \begin{eqnarray*}\label{}
\frac{1}{\widetilde{W}_{n-q}(\varphi_0^*)}=
 \overline{W}_{n+1-q}(\varphi_0^*)^qe^{-\overline{W}_{n+1-q}(\varphi_0^*)}.\end{eqnarray*}
 This finishes the proof. 
		\end{proof}

Next, we prove that the infimum in \eqref{minimization  problem} is attained.  The following lemma  can be found   in \cite{Cordero-ErausquinKlartag2015}.

\begin{lemma}\label{Cordero-ErausquinKlartag's lemma 16}
Let $\mu$ be a non-zero finite Borel measure on  $\mathbb{R}^n$ that is not supported in a lower-dimensional subspace, and let $M$ be the interior of ${\rm conv}({\rm Supp}(\mu))$. If $x_0\in M$, then there exists $C_{\mu,x_0}>0$ with the following property: For any non-negative, $\mu$-integrable, convex function $\varphi: \mathbb{R}^n\rightarrow [0,\infty]$,
$$
\varphi(x_0)\leq C_{\mu,x_0}\int_{\mathbb{R}^n}\varphi d\mu.
$$
\end{lemma}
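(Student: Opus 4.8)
The plan is to realize $x_0$ as a convex combination of points of $\operatorname{Supp}(\mu)$ in a way that survives small perturbations of those points, and then to integrate the convexity (Jensen) inequality for $\varphi$ against an appropriate product of normalized restrictions of $\mu$. Since the inequality is vacuous when $\int_{\mathbb{R}^n}\varphi\,d\mu=\infty$, I may assume throughout that this integral is finite.

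\emph{Step 1 (surrounding $x_0$ by finitely many support points).} Since $M$ is open and $x_0\in M$, I would fix a full-dimensional polytope $P\subset M$ with $x_0\in\operatorname{int}(P)$ (e.g.\ a small cube centered at $x_0$) and let $w_1,\dots,w_m$ be its vertices. Each $w_k\in M\subseteq\operatorname{conv}(\operatorname{Supp}(\mu))$, hence $w_k$ is a finite convex combination of points of $\operatorname{Supp}(\mu)$; collecting all the points that occur in these combinations yields $p_1,\dots,p_N\in\operatorname{Supp}(\mu)$ with $\operatorname{conv}\{p_1,\dots,p_N\}\supseteq\operatorname{conv}\{w_1,\dots,w_m\}=P$. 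In particular there is $\rho>0$ with $B(x_0,\rho)\subseteq\operatorname{conv}\{p_1,\dots,p_N\}$.

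\emph{Step 2 (robust barycentric coordinates).} Put $\delta=\rho/2$ and let $B_i=B(p_i,\delta)$ be open balls. For any $y_i\in B_i$ and any unit vector $u$ one has $\max_i\langle y_i,u\rangle\geq\max_i\langle p_i,u\rangle-\delta\geq(\langle x_0,u\rangle+\rho)-\delta>\langle x_0,u\rangle$, the middle inequality using $B(x_0,\rho)\subseteq\operatorname{conv}\{p_1,\dots,p_N\}$; symmetrically $\min_i\langle y_i,u\rangle<\langle x_0,u\rangle$. Hence no hyperplane separates $x_0$ from $\{y_1,\dots,y_N\}$, so $x_0\in\operatorname{conv}\{y_1,\dots,y_N\}$ for every $(y_1,\dots,y_N)\in B_1\times\cdots\times B_N$. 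Thus the compact convex set $S(\vec y)=\{\lambda\in\Delta_{N-1}:\sum_i\lambda_i y_i=x_0\}$ (with $\Delta_{N-1}$ the standard simplex in $\mathbb{R}^N$) is nonempty there, and taking $\lambda(\vec y)$ to be, say, its point closest to the barycenter of $\Delta_{N-1}$ defines Borel functions $\lambda_i\colon B_1\times\cdots\times B_N\to[0,1]$ with $\sum_i\lambda_i\equiv1$ and $\sum_i\lambda_i(\vec y)y_i\equiv x_0$.

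\emph{Step 3 (integrating Jensen) and the main obstacle.} Since $p_i\in\operatorname{Supp}(\mu)$ we have $\mu(B_i)>0$, so $\nu_i:=\mu|_{B_i}/\mu(B_i)$ is a probability measure. Convexity of $\varphi$ gives $\varphi(x_0)=\varphi\big(\sum_i\lambda_i(\vec y)y_i\big)\leq\sum_i\lambda_i(\vec y)\varphi(y_i)$ for all $\vec y\in\prod_i B_i$; integrating against $\nu_1\otimes\cdots\otimes\nu_N$ and using $0\leq\lambda_i\leq1$, $\varphi\geq0$, and Tonelli's theorem yields
\[
\varphi(x_0)\leq\sum_{i=1}^N\int_{B_i}\varphi\,d\nu_i=\sum_{i=1}^N\frac{1}{\mu(B_i)}\int_{B_i}\varphi\,d\mu\leq\Big(\sum_{i=1}^N\frac{1}{\mu(B(p_i,\delta))}\Big)\int_{\mathbb{R}^n}\varphi\,d\mu,
\]
so the lemma holds with $C_{\mu,x_0}=\sum_{i=1}^N\mu(B(p_i,\delta))^{-1}<\infty$. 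The only genuinely delicate point is Step 1: one must pass from ``$x_0$ lies in the convex hull of $\operatorname{Supp}(\mu)$'' to ``$x_0$ lies, with a quantitative margin $\rho$, in the convex hull of finitely many explicit points of $\operatorname{Supp}(\mu)$'', which is exactly what makes the perturbation in Step 2 work and makes $C_{\mu,x_0}$ finite. Everything afterward is a routine combination of Jensen's inequality and Tonelli's theorem; I note that neither $\int_{\mathbb{R}^n}|x|\,d\mu<\infty$ nor even finiteness of $\mu$ is used here — only that $\operatorname{Supp}(\mu)$ is full-dimensional (so $M\neq\emptyset$) and that balls around support points have positive measure.
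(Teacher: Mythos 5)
Your argument is correct. Note that the paper offers no proof of this lemma at all: it is quoted from Cordero-Erausquin and Klartag (Lemma~16 of \emph{Moment measures}), so the only comparison available is with that source, whose proof is essentially the one you give --- place $x_0$, with a quantitative margin, inside the convex hull of finitely many points of ${\rm Supp}(\mu)$, and then average the finite Jensen inequality over small balls $B(p_i,\delta)$ of positive $\mu$-measure; your Steps 1--2 supply exactly the robustness that makes the constant $C_{\mu,x_0}=\sum_i\mu(B(p_i,\delta))^{-1}$ finite. Two small remarks. First, the measurable-selection construction of $\lambda(\vec y)$ in Step~2 is dispensable: since you discard the weights via $0\le\lambda_i\le 1$ before integrating, what actually gets integrated is the pointwise bound $\varphi(x_0)\le\sum_i\varphi(y_i)$, whose measurability only requires $\varphi$ to be $\mu$-measurable --- which is implicit in ``$\mu$-integrable'' and is the right hypothesis to invoke, since an extended-real-valued convex function need not be Borel on the boundary of its domain. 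Second, your closing observation is accurate: neither $\int_{\mathbb{R}^n}|x|\,d\mu<\infty$ nor finiteness of $\mu$ enters the argument; only $x_0\in M$ and $\mu(B(p_i,\delta))>0$ are used, the latter being exactly what membership of $p_i$ in ${\rm Supp}(\mu)$ provides.
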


\begin{pro}\label{existence of the minimization  problem}
Let $\mu$ be a finite Borel measure on $\mathbb{R}^n$  with $\int_{\mathbb{R}^n}|x|d\mu(x)<\infty$ that is not supported in a lower dimensional subspace. If $q\leq0$, then there exists a non-negative  convex,   lower semi-continuous and  $\mu$-integrable function $\varphi: \mathbb{R}^n\rightarrow \mathbb{R}$ with $\varphi(0)=0$ and $\overline{W}_{n+1-q}(\varphi^*)$ is finite  such that
\begin{eqnarray}
\Psi_{\mu}(\varphi)=\inf_f\left\{ \Psi_{\mu}(f):  0<\overline{W}_{n+1-q}(f^*)<\infty, f\in L_1(\mu), f(0)=0 \right\}.
\end{eqnarray}
\end{pro}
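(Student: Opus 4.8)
The plan is to prove Proposition \ref{existence of the minimization  problem} by the direct method of the calculus of variations, in the scheme that works for Minkowski-type problems. Write $I$ for the infimum in \eqref{minimization  problem} and let $\mathcal{A}$ denote the admissible class, i.e.\ the convex functions $f\in L_1(\mu)$ with $f(0)=0$ and $0<\overline{W}_{n+1-q}(f^*)<\infty$. First I would check that $-1\le I<0$. The lower bound is immediate: $f(0)=0$ forces $f^*\ge 0$, so $0<e^{-\overline{W}_{n+1-q}(f^*)}\le 1$, and since $f\ge 0$ we get $\Psi_\mu(f)\ge\tfrac1{|\mu|}\int f\,d\mu-1\ge-1$. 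For $I<0$ (in particular $\mathcal A\ne\emptyset$) I would test with a function coming from a support function under Legendre duality: for a convex body $L$ with $0\in\mathrm{int}\,L$ the function $f=h_L^{*}$ equals $0$ on $L$ and $+\infty$ off $L$, lies in $\mathcal L_o$, and satisfies $f^*=h_L$, whence $\overline{W}_{n+1-q}(f^*)=\big(\int h_L^{1-q}\,d\gamma_n\big)^{1/(1-q)}\in(0,\infty)$ because $h_L$ is $1$-homogeneous and $\gamma_n$ decays like a Gaussian. If $\mathrm{Supp}\,\mu$ is bounded one takes $L\supseteq\mathrm{Supp}\,\mu$, so $\int f\,d\mu=0$ and $\Psi_\mu(f)=-e^{-\overline{W}_{n+1-q}(h_L)}<0$; for unbounded support one replaces $f$ by the convex function equal to $0$ on a large ball $B_R$ and to $\theta(|x|-R)$ outside, where $\theta$ is a superlinear convex function with $\int\theta(|x|)\,d\mu<\infty$ (which exists since $\int|x|\,d\mu<\infty$), and checks that $\int f\,d\mu$ can be made small while $\overline{W}_{n+1-q}(f^*)$ stays bounded. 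Thus $-1\le I<0$.

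Next I would take a minimizing sequence $\varphi_j\in\mathcal A$, which may be assumed convex and lower semi-continuous. From $\Psi_\mu(\varphi_j)\to I$ and $e^{-\overline{W}_{n+1-q}(\varphi_j^*)}\in(0,1]$ the quantities $\tfrac1{|\mu|}\int\varphi_j\,d\mu$ are bounded; more importantly, since $\Psi_\mu(\varphi_j)<I/2<0$ for large $j$ we get $e^{-\overline{W}_{n+1-q}(\varphi_j^*)}>-I/2>0$, so $\overline{W}_{n+1-q}(\varphi_j^*)\le\log(2/(-I))$ is bounded. Let $M=\mathrm{int}\,\mathrm{conv}(\mathrm{Supp}\,\mu)$, which is nonempty because $\mu$ is not supported in a lower-dimensional subspace. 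Lemma \ref{Cordero-ErausquinKlartag's lemma 16} gives $\varphi_j(x_0)\le C_{\mu,x_0}\int\varphi_j\,d\mu$ for $x_0\in M$, so the $\varphi_j$ are uniformly bounded at finitely many points spanning a simplex in $M$; using in addition $\varphi_j(0)=0$ and the inequality $\varphi_j(tx_0)\le t\varphi_j(x_0)$ along rays, they are uniformly bounded on compact subsets of $\mathrm{conv}(M\cup\{0\})$, hence (being convex and nonnegative) locally equi-Lipschitz there. By Theorem \ref{Rockafellar's result} a subsequence converges uniformly on compact subsets to a finite convex function, whose lower semi-continuous convex extension to $\mathbb R^n$ I denote $\varphi_0$; then $\varphi_0\ge 0$ and $\varphi_0(0)=0$.

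It remains to verify that $\varphi_0\in\mathcal A$ and realizes the infimum. By the stability of the Legendre transform, $\varphi_j^*\to\varphi_0^*$ (along a further subsequence, locally uniformly near the origin, hence pointwise a.e.), so Fatou's lemma yields $\int(\varphi_0^*)^{1-q}\,d\gamma_n\le\liminf\int(\varphi_j^*)^{1-q}\,d\gamma_n\le\big(\log(2/(-I))\big)^{1-q}<\infty$; the bound $\overline{W}_{n+1-q}(\varphi_0^*)>0$ follows since each $\varphi_j$, hence $\varphi_0$, is bounded by some $C$ on a fixed ball $B(x_0,\varepsilon)\subset M$, so $\varphi_0^*(z)\ge\varepsilon|z|+\langle x_0,z\rangle-C\ge\tfrac\varepsilon2|z|$ for $z$ in a cone and $|z|$ large, forcing $\int(\varphi_0^*)^{1-q}\,d\gamma_n>0$. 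Fatou's lemma also gives $\int\varphi_0\,d\mu\le\liminf\int\varphi_j\,d\mu<\infty$, where the first-moment hypothesis is what lets one pass to the limit over the part of $\mathrm{Supp}\,\mu$ lying on the boundary of $\mathrm{conv}(\mathrm{Supp}\,\mu)$; thus $\varphi_0\in L_1(\mu)$ and $\varphi_0\in\mathcal A$. Finally, combining $\int\varphi_0\,d\mu\le\liminf\int\varphi_j\,d\mu$ with $\overline{W}_{n+1-q}(\varphi_0^*)\le\liminf\overline{W}_{n+1-q}(\varphi_j^*)$, hence $e^{-\overline{W}_{n+1-q}(\varphi_0^*)}\ge\limsup e^{-\overline{W}_{n+1-q}(\varphi_j^*)}$, we get $\Psi_\mu(\varphi_0)\le\liminf\Psi_\mu(\varphi_j)=I$, and since $\varphi_0\in\mathcal A$ this must be an equality.

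The step I expect to be the main obstacle is the non-degeneracy of the limit, namely ruling out $\overline{W}_{n+1-q}(\varphi_0^*)=\infty$: this is exactly what the strict inequality $I<0$ buys (if $I$ were $\ge 0$ a minimizing sequence could escape towards $\overline{W}_{n+1-q}(\varphi^*)=\infty$, where $\Psi_\mu$ tends to $\tfrac1{|\mu|}\int\varphi\,d\mu\ge 0$), so the real content is in choosing a good competitor in the first step, especially when $\mathrm{Supp}\,\mu$ is unbounded, and in the Fatou step, where $\int|x|\,d\mu<\infty$ is needed to pass to the limit in $\int\varphi_j\,d\mu$ near the relative boundary of the support.
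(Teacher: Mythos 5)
Your proof follows the same overall strategy as the paper's: use Lemma \ref{Cordero-ErausquinKlartag's lemma 16} to get local uniform bounds on a minimizing sequence from the bound $\tfrac1{|\mu|}\int\varphi_i\,d\mu\le c_\mu+1$, extract a locally uniform limit via Theorem \ref{Rockafellar's result}, extend it to $\partial M$ by radial limits, and pass to the limit in $\Psi_\mu$ using Fatou, monotone convergence, and a lower-semicontinuity argument for $\overline W_{n+1-q}(\cdot)$. Where you differ is in your insistence on first proving $I<0$. That insistence is actually well placed: as you observe, without some such bound a minimizing sequence could in principle escape to $\overline W_{n+1-q}(\varphi_i^*)\to+\infty$, in which case $\Psi_\mu(\varphi_i)\to\tfrac1{|\mu|}\int\varphi_i\,d\mu\ge0$ and the subsequential limit need not satisfy $\overline W_{n+1-q}(\varphi^*)<\infty$. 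The paper's proof does not address this: it uses $\tilde f(x)=|x|$ as a comparison function, but $\tilde f^*$ is $0$ on the unit ball and $+\infty$ outside, so $\overline W_{n+1-q}(\tilde f^*)=+\infty$ and $\tilde f$ is not in the admissible class; and even granting $c_\mu$ as an upper bound, the inequality $\Psi_\mu(\varphi_i)\le c_\mu$ only controls $\int\varphi_i\,d\mu$, not $\overline W_{n+1-q}(\varphi_i^*)$, and the paper's final claim ``$\liminf_j\overline W_{n+1-q}(\varphi_{i_j}^*)<+\infty$'' is asserted without proof. So your identification of the step that needs $I<0$ is a genuine improvement.

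The gap in your proposal is precisely in proving $I<0$ for unbounded $\mathrm{Supp}\,\mu$. De la Vallée-Poussin gives a convex superlinear $\theta$ with $\theta(0)=0$ and $\int\theta(|x|)\,d\mu<\infty$, and for $f(x)=\theta\bigl(\max(|x|-R,0)\bigr)$ one computes $f^*(y)=R|y|+\theta^*(|y|)$. But $\int\bigl(\theta^*(|y|)\bigr)^{1-q}\,d\gamma_n(y)<\infty$ requires $\theta^*$ to be sub-Gaussian, which forces $\theta(t)\gtrsim t\log t$ near $\infty$; de la Vallée-Poussin alone does not give this, since it produces a $\theta$ adapted to the tail of $\mu$ that may grow only as $t\,\rho(t)$ with $\rho$ increasing arbitrarily slowly. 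For instance, one can write down $\mu$ with $\int|x|\,d\mu<\infty$ such that any $\theta$ with $\int\theta(|x|)\,d\mu<\infty$ satisfies $\theta(t)=o(t\sqrt{\log t})$, and then $\theta^*(s)\gtrsim e^{s^2}/s$, which is not sub-Gaussian and makes $\overline W_{n+1-q}(f^*)=+\infty$. So your competitor $f$ need not be admissible, and $I<0$ is not established in general. You also should be careful with the phrase ``$\varphi_j^*\to\varphi_0^*$ pointwise a.e.'': since $\varphi_0$ is $+\infty$ off $\overline M$ while the $\varphi_j$ are finite there, one has only the one-sided estimate $\varphi_0^*\le\liminf_j\varphi_j^*$ (which is exactly what the paper obtains by its piecewise-affine under-approximation $\tilde\varphi_{j_0}^*$); fortunately that one-sided inequality is all Fatou needs, so this is a wording issue rather than a logical one. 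Your handling of $\overline W_{n+1-q}(\varphi_0^*)>0$ via a cone estimate is correct and is a step the paper's proof omits entirely.
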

\begin{proof}
  Since $f^{**}\leq f$ and $f^{***}=f^*$ for any function $f$,  it yields
\begin{eqnarray*}
\Psi_{\mu}(f^{**})&\leq&\Psi_{\mu}(f).
\end{eqnarray*}
We can just consider the infimum of $\Psi_{\mu}$ by restricting
our attention to the non-negative,  convex and  lower semi-continuous functions.
 %Since  $q\leq0$, Jensen's inequality deduces the minimization  problem (\ref{minimization  problem}) is equivalent to
%\begin{eqnarray}\label{minimization  problem1}
%\inf_f\Big\{ \Phi_{\mu}(f): \int_{\mathbb{R}^n}f^*d\gamma_n\leq1,  f\in L_1(\mu)\Big\},
%\end{eqnarray}
%where
%$$\Phi_{\mu}(f)=\int_{\mathbb{R}^n}f(x)d\mu(x)
%+ \log\int_{\mathbb{R}^n}f^*d\gamma_n.$$

Let $\tilde{f}(x)= |x|$ and  set $c_{\mu}=\Psi_{\mu}(\tilde{f})$. Then $c_{\mu}$ is a finite real number. Let $\varphi_1,\varphi_2,\cdots$ be a minimizing sequence $\mu$-integrable convex functions with $\varphi_i(0)=0$ ($i=1,2,\cdots$), i.e.,
\begin{eqnarray}
\lim_{i\rightarrow\infty}\Psi_{\mu}(\varphi_i)=\inf_f\left\{ \Psi_{\mu}(f):  0<\overline{W}_{n+1-q}(f^*)<\infty, f(0)=0\right\}\leq c_{\mu}.
\end{eqnarray}
Furthermore, we may remove finitely many elements from the sequence $\{\varphi_i\}$ and assume that for all $i$,
\begin{eqnarray}
\Psi_{\mu}(\varphi_i)\leq c_{\mu}.
\end{eqnarray}
Since $\overline{W}_{n+1-q}(\varphi_i^*)>0$ for all $i$, therefore
\begin{eqnarray}\label{bound on the support of measure}
\frac{1}{|\mu|}\int_{\mathbb{R}^n}\varphi_i d\mu\leq c_{\mu}+1,
\end{eqnarray}
for all $i$.  Let $M$ be the interior of ${\rm conv}({\rm Supp}(\mu))$. By Lemma \ref{Cordero-ErausquinKlartag's lemma 16} and (\ref{bound on the support of measure}), we have
\begin{eqnarray}
\varphi_i (x_0)\leq (c_{\mu}+1)C_{\mu,x_0},
\end{eqnarray}
for all $i$ and any $x_0\in M$.
Theorem \ref{Rockafellar's result} ensures that there exists a subsequence $\{\varphi_{i_j}\}_{j=1,2,\cdots}$  that converges pointwise in $M$ to a convex function $\varphi: M\rightarrow \mathbb{R}$. Additionally, $\varphi$ is non-negative in $M$ with $\varphi(0)=0$.

 We extend the definition of $\varphi$ by setting $\varphi(x)=+\infty$ for $x\not\in \overline{M}$. We still need to define $\varphi$ for  $x\in\partial M$. Set
\begin{eqnarray}\label{value on boundary}
\varphi(x)=\lim_{\lambda\rightarrow 1^{-}}\varphi(\lambda x),\quad x\in\partial M.
\end{eqnarray}
Since the function $\lambda\rightarrow \varphi(\lambda x)$ is non-decreasing for $\lambda\in (0,1)$, hence the limit in (\ref{value on boundary}) always exists in $[0,+\infty]$. We need to show that $\varphi$ is $\mu$-integrable. For any $x\in \overline{M}$, the point $\lambda x \in M$ for $0<\lambda<1$, and by the pointwise convergence in $M$,
\begin{eqnarray}\label{pointwise convergence}
\varphi(\lambda x)=\lim_{j\rightarrow\infty} \varphi_{i_j}(\lambda x),
\end{eqnarray}
for any $x\in \overline{M}$. Since the measure $\mu$ does not support in a lower-dimensional subspace, so that $0\in {\rm Supp} (\mu)\subseteq  \overline{M}$, then from (\ref{pointwise convergence}), (\ref{bound on the support of measure}), for any $0<\lambda<1$, by the convexity and $\varphi_i(0)=0$,
\begin{eqnarray*}
\varphi_{i_j}(\lambda x)&\leq&\lambda\varphi_{i_j}( x)+(1-\lambda)\varphi_{i_j}(0)\\
&\leq&  \varphi_{i_j}( x),
\end{eqnarray*}
by Fatou's Lemma yields
\begin{eqnarray}\label{10}
\int_{\mathbb{R}^n}\varphi(\lambda x)d\mu(x)
&\leq&\liminf_{j\rightarrow\infty}\int_{\mathbb{R}^n}\varphi_{i_j}(\lambda x)d\mu(x)  \nonumber \\
%&\leq&\liminf_{j\rightarrow\infty}\int_{\mathbb{R}^n}\varphi_{i_j}( x)d\mu(x)\nonumber \\
&\leq&\liminf_{j\rightarrow\infty}\int_{\mathbb{R}^n}\varphi_{i_j}( x)d\mu(x).
\end{eqnarray}
Recall that we have $\varphi(\lambda x)\nearrow \varphi(x)$ as $\lambda\rightarrow 1^{-}$. From the monotone convergence theorem and (\ref{10}),
\begin{eqnarray*}\label{1}
\int_{\mathbb{R}^n}\varphi( x)d\mu(x)&=&\lim_{\lambda\rightarrow 1^{-}}\int_{\mathbb{R}^n}\varphi(\lambda x)d\mu(x)\\
&\leq&\liminf_{j\rightarrow\infty}\int_{\mathbb{R}^n}\varphi_{i_j}( x)d\mu(x)  \\
&<&+\infty.
\end{eqnarray*}
Hence, $\varphi$ is $\mu$-integrable.

By the continuity of $\varphi$ in $M$, we can choose a dense sequence $x_1,x_2,\cdots$ in $M$ such that
\begin{eqnarray*}
\varphi^*(y)&=&\sup_{x\in \mathbb{R}^n}\{\langle x, y\rangle-\varphi(x)\}\\
&=&\sup_{x\in M}\{\langle x, y\rangle-\varphi(x)\}\\
&=&\sup_{i\geq1}\{\langle x_i, y\rangle-\varphi(x_i)\},
\end{eqnarray*}
for $y\in \mathbb{R}^n$. For $j\geq1$, we set $\tilde{\varphi}_j^*(y)=\max_{1\leq i\leq j}\{\langle x_i, y\rangle-\varphi(x_i)\}$. We notice that $\tilde{\varphi}_j^*\nearrow \varphi^*$, then   the increment of function $t^{1-q}$ when $q\leq0$ and the monotone convergence theorem deduce
$$
\overline{W}_{n+1-q}(\varphi^*)
=\lim_{j\rightarrow\infty}\overline{W}_{n+1-q}(\tilde{\varphi}^*_j).
$$
For any $\varepsilon>0$, there exists $j_0$ such that
 $$
\left|\overline{W}_{n+1-q}(\varphi^*)
-\overline{W}_{n+1-q}(\tilde{\varphi}^*_{j_0})\right|\leq\frac{\varepsilon}{2}.
$$
By the fact $\varphi_{i_j}\rightarrow\varphi$ pointwise on the set $\{x_1,\cdots,x_{j_0}\}$,  then for sufficiently large $j$,  we have
\begin{eqnarray}
|\varphi^*_{i_j}(x)-\tilde{\varphi}^*_{j_0}(x)|\leq\frac{\varepsilon}{2},
\end{eqnarray}
for all $x\in \mathbb{R}^n$. Since $q<0$, then Minkowski's inequality deduces
\begin{eqnarray*}
\overline{W}_{n+1-q}(\varphi^*)
&\leq&\overline{W}_{n+1-q}(\tilde{\varphi}^*_{j_0})+\tfrac{\varepsilon}{2}\\
&\leq&\liminf_{j\rightarrow\infty}\left(\int_{\mathbb{R}^n}(\varphi^*_{i_j}(x)
+\tfrac{\varepsilon}{2})^{1-q}d\gamma_n(x)\right)^{\frac{1}{1-q}}+\tfrac{\varepsilon}{2}\\
&\leq&\liminf_{j\rightarrow\infty}\overline{W}_{n+1-q}(\varphi_{i_j}^*)+\varepsilon.
\end{eqnarray*}
This means 
\begin{eqnarray*}
 \overline{W}_{n+1-q}(\varphi^*) 
&\leq&\liminf_{j\rightarrow\infty} \overline{W}_{n+1-q}(\varphi_{i_j}^*)<+\infty , \end{eqnarray*}
and
\begin{eqnarray}\label{11}
-e^{-\overline{W}_{n+1-q}(\varphi^*)}
&\leq&\liminf_{j\rightarrow\infty}-e^{-\overline{W}_{n+1-q}(\varphi_{i_j}^*)}. \end{eqnarray}
Together  \eqref{11} with \eqref{10}, we have 
\begin{eqnarray}
\Psi_{\mu}(\varphi)\leq\liminf_{i\rightarrow\infty}\Psi_{\mu}(\varphi_i)
\end{eqnarray}
Therefore,  there exists a non-negative, $\mu$-integrable, convex function $\varphi$ with   $\varphi(0)=0$ such that
\begin{eqnarray*}
\Psi_{\mu}(\varphi)=\inf_f\left\{ \Psi_{\mu}(f):  f\in L_1(\mu), f(0)=0 \right\}.
\end{eqnarray*}
This completes our proof. 
\end{proof}

We are now in the position to prove the existence of solution to the  functional dual Minkowski problem. 

\vskip 0.3cm
\begin{thm}
Let $\mu$ be a non-zero finite Borel measure on  $\mathbb{R}^n$  with $\int_{\mathbb{R}^n}|x|d\mu(x)<\infty$  which is not supported in a lower-dimensional subspace and $q\leq0$.   There exists a non-negative  convex function $\varphi$ with $\varphi(0)=0$ and  $\int_{\mathbb{R}^n}\varphi(x)^{1-q}d\gamma_n(x)$  is finite  such that $\mu(\cdot)=\tau\widetilde{C}_q(\varphi,\cdot)$.
\end{thm}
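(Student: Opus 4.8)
The plan is to obtain the theorem by combining the two preceding results. First I would apply Proposition \ref{existence of the minimization problem}: its hypotheses on $\mu$ (finite, $\int_{\mathbb{R}^n}|x|\,d\mu(x)<\infty$, not supported in a lower-dimensional subspace) and on $q$ ($q\le 0$) are exactly those assumed here, so it produces a non-negative, convex, lower semi-continuous, $\mu$-integrable function $\varphi_0$ with $\varphi_0(0)=0$ and $\overline{W}_{n+1-q}(\varphi_0^*)$ finite that attains the infimum in (\ref{minimization problem}). This $\varphi_0$ then satisfies all the hypotheses of Lemma \ref{Existence}, which therefore gives
\[
\mu(\cdot)=\tau\,\widetilde{C}_q(\varphi_0^*,\cdot),\qquad \tau=\frac{|\mu|}{\widetilde{W}_{n-q}(\varphi_0^*)}.
\]
I would then take $\varphi:=\varphi_0^*$. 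Since $\varphi_0$ is lower semi-continuous and convex, $\varphi_0^{**}=\varphi_0$, hence $\widetilde{C}_q(\varphi,\cdot)=\widetilde{C}_q(\varphi_0^*,\cdot)$ and the displayed identity becomes exactly $\mu(\cdot)=\tau\widetilde{C}_q(\varphi,\cdot)$.

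It then remains only to check that $\varphi=\varphi_0^*$ has the structural properties claimed in the statement. It is convex, being a Legendre transform. Because $\varphi_0\ge 0$ and $\varphi_0(0)=0$ we have $\inf\varphi_0=0$, so (\ref{elementary property of Fenchel conjugate}) gives $\varphi(0)=\varphi_0^*(0)=-\inf\varphi_0=0$; and $\varphi_0^*(y)\ge\langle 0,y\rangle-\varphi_0(0)=0$ for every $y\in\mathbb{R}^n$, so $\varphi\ge 0$. Finally, by the definitions of $\widetilde{W}_{n+1-q}$ and $\overline{W}_{n+1-q}$,
\[
\int_{\mathbb{R}^n}\varphi(x)^{1-q}\,d\gamma_n(x)=\widetilde{W}_{n+1-q}(\varphi_0^*)=\overline{W}_{n+1-q}(\varphi_0^*)^{1-q}<\infty,
\]
which is the last assertion.

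The substance of the argument is of course carried by the two quoted results — the compactness of a minimizing sequence (through Theorem \ref{Rockafellar's result} and Lemma \ref{Cordero-ErausquinKlartag's lemma 16}) together with the lower-semicontinuity estimates for $\Psi_\mu$ in Proposition \ref{existence of the minimization problem}, and the first-variation (Euler--Lagrange) computation (through Lemma \ref{Berman's formula}) in Lemma \ref{Existence}. At the level of the present theorem the point I would be most careful about is the non-degeneracy $\widetilde{W}_{n-q}(\varphi_0^*)\in(0,\infty)$, which is what makes $\tau$ a well-defined positive constant and what is required for $\widetilde{C}_q(\varphi,\cdot)$ of Definition \ref{definition} to be defined. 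Finiteness follows from the monotonicity of $L^p(\gamma_n)$-means: since $0\le -q\le 1-q$ one has $\widetilde{W}_{n-q}(\varphi_0^*)\le\widetilde{W}_{n+1-q}(\varphi_0^*)^{-q/(1-q)}<\infty$ (and $\widetilde{W}_{n-q}(\varphi_0^*)=1$ when $q=0$). Positivity is where the assumption that $\mu$ is not supported in a lower-dimensional subspace enters: if $\varphi_0^*$ vanished $\gamma_n$-almost everywhere, then, being convex and non-negative, $\varphi_0^*$ would be identically $0$, so that $\varphi_0=\varphi_0^{**}$ would be $+\infty$ off the origin, contradicting the $\mu$-integrability of $\varphi_0$ because $\text{Supp}(\mu)\not\subseteq\{0\}$. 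Hence $\widetilde{W}_{n-q}(\varphi_0^*)>0$, $\tau>0$, and I foresee no further obstacle.
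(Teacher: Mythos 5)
Your proposal follows exactly the paper's own route: the theorem is obtained by combining Proposition \ref{existence of the minimization  problem} (existence of a minimizer $\varphi_0$) with Lemma \ref{Existence} (the Euler--Lagrange identification $\mu=\tau\widetilde{C}_q(\varphi_0^*,\cdot)$), taking $\varphi=\varphi_0^*$. Your extra verifications of the structural properties of $\varphi_0^*$ and of the non-degeneracy of $\widetilde{W}_{n-q}(\varphi_0^*)$ are correct details that the paper leaves implicit.
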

\begin{proof}
The  proof  follows from  Lemma \ref{Existence} and Proposition \ref{existence of the minimization  problem}.
\end{proof}

\vskip 0.3cm
~~~
\subsection{Uniqueness of the dual functional  Minkowski problem}
~~~\vskip 0.3cm
In this subsection we will show that if the dual curvature measures of two convex functions are equal under some asumptions, then the convex functions are also equal almost everywhere in $\mathbb{R}^n$. The tool we will be used is the Minkowski inequality (\ref{Minkowski inequality}), similar to the uniqueness part of the $L_p$ Minkowski problem for convex bodies.

We  are ready to prove the uniqueness of the solution to the functional dual Minkowski problems.

\begin{thm}\label{Uniqueness}
Let $q\leq0$ and $\varphi_1,\varphi_2\in \mathcal{L}_o'$ with $\varphi_1(0)=\varphi_2(0)=0$. Assume that $\varphi_1$ is  an admissible perturbation for  $\varphi_2$ and $\varphi_2$ is also an admissible perturbation for  $\varphi_1$. 
  If $\widetilde{C}_{q-1}(\varphi_1,\cdot)=\widetilde{C}_{q-1}(\varphi_2,\cdot)$, then $\varphi_1(x)=\varphi_2(x-x_0)$ for some $x_0\in \mathbb{R}^n$ when $q=0$ and $\varphi_1(x)=\varphi_2(x)$ when $q<0$.
\end{thm}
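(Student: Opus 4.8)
The plan is to derive the uniqueness from the Minkowski type inequality (\ref{Minkowski inequality}) applied symmetrically, together with the integral representation of the dual mixed quermassintegral from Theorem \ref{integral formula}. First I would observe that the hypothesis $\widetilde{C}_{q-1}(\varphi_1,\cdot)=\widetilde{C}_{q-1}(\varphi_2,\cdot)$, combined with the integral formula $\widetilde{W}_{n+1-q}(\varphi,\psi)=\int_{\mathbb{R}^n}\psi^*(x)\,d\widetilde{C}_q(\varphi,x)$ (note the index shift: $\widetilde{W}_{n+1-q}$ pairs with $\widetilde{C}_q$, so here we use $\widetilde{W}_{n+2-q}$ paired with $\widetilde{C}_{q-1}$, or equivalently rename $q\mapsto q-1$ in the theorem), yields the two identities
\begin{eqnarray*}
\widetilde{W}_{n+2-q}(\varphi_1,\varphi_2)&=&\int_{\mathbb{R}^n}\varphi_2^*\,d\widetilde{C}_{q-1}(\varphi_1,\cdot)=\int_{\mathbb{R}^n}\varphi_2^*\,d\widetilde{C}_{q-1}(\varphi_2,\cdot)=\widetilde{W}_{n+2-q}(\varphi_2,\varphi_2),\\
\widetilde{W}_{n+2-q}(\varphi_2,\varphi_1)&=&\int_{\mathbb{R}^n}\varphi_1^*\,d\widetilde{C}_{q-1}(\varphi_2,\cdot)=\int_{\mathbb{R}^n}\varphi_1^*\,d\widetilde{C}_{q-1}(\varphi_1,\cdot)=\widetilde{W}_{n+2-q}(\varphi_1,\varphi_1).
\end{eqnarray*}
To invoke Theorem \ref{integral formula} one must check its hypotheses for both orderings of the pair: $\varphi_1,\varphi_2\in\mathcal{L}_o'$, each is an admissible perturbation for the other (this is exactly the symmetric assumption in the statement), $\varphi_1(0)=\varphi_2(0)=0$, and the relevant $\widetilde{W}_{n+3-q}$ is finite — I would note that $\widetilde{C}_{q-1}(\varphi_i,\cdot)$ being a (finite) measure already forces the needed integrability, and that the admissible-perturbation condition transfers finiteness between $\varphi_1$ and $\varphi_2$ since $\varphi_i^*$ controls $\varphi_j^*$ up to a constant.

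Next I would feed these equalities into the Minkowski inequality (\ref{Minkowski inequality}) with the index $q$ replaced by $q-1$ (so that $n+1-(q-1)=n+2-q$). Writing it once with $(\varphi,\psi)=(\varphi_1,\varphi_2)$ and once with $(\varphi,\psi)=(\varphi_2,\varphi_1)$ gives
\begin{eqnarray*}
0&=&\widetilde{W}_{n+2-q}(\varphi_1,\varphi_2)-\widetilde{W}_{n+2-q}(\varphi_1,\varphi_1)\ \geq\ \frac{\overline{W}_{n+2-q}(\varphi_2)-\overline{W}_{n+2-q}(\varphi_1)}{\overline{W}_{n+2-q}(\varphi_1)^{q-1}},\\
0&=&\widetilde{W}_{n+2-q}(\varphi_2,\varphi_1)-\widetilde{W}_{n+2-q}(\varphi_2,\varphi_2)\ \geq\ \frac{\overline{W}_{n+2-q}(\varphi_1)-\overline{W}_{n+2-q}(\varphi_2)}{\overline{W}_{n+2-q}(\varphi_2)^{q-1}}.
\end{eqnarray*}
Since the normalized dual quermassintegrals $\overline{W}_{n+2-q}$ are positive, the two displayed inequalities force $\overline{W}_{n+2-q}(\varphi_2)\leq\overline{W}_{n+2-q}(\varphi_1)$ and the reverse, hence $\overline{W}_{n+2-q}(\varphi_1)=\overline{W}_{n+2-q}(\varphi_2)$. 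Consequently equality holds in (\ref{Minkowski inequality}) for the pair $(\varphi_1,\varphi_2)$. By the equality characterization in Theorem \ref{Minkowski inequality}, when $q-1=0$, i.e. $q=1$ — which is outside our range — we would get a translate; in our range $q-1<0$, i.e. $q<1$, equality forces $\varphi_1=\varphi_2$ almost everywhere, except we must separate the borderline $q=0$ (where $q-1=-1$) from $q<0$ (where $q-1<-1$): for $q=0$ the equality case of the Minkowski inequality at parameter $-1$ gives agreement up to translation $\varphi_1(x)=\varphi_2(x-x_0)$, while for $q<0$ (parameter $q-1<-1$) it gives $\varphi_1=\varphi_2$ a.e. This matches the two cases in the statement.

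The main obstacle I anticipate is not the inequality chase but the bookkeeping of hypotheses when applying Theorem \ref{integral formula} in \emph{both} directions simultaneously: one needs $\widetilde{W}_{n+3-q}(\varphi_1)$ and $\widetilde{W}_{n+3-q}(\varphi_2)$ finite (the ``$\widetilde{W}_{n+2-q}(\varphi)$ finite'' hypothesis of that theorem, shifted), and one needs to know a priori that $\widetilde{C}_{q-1}(\varphi_i,\cdot)$ is a finite measure so that the pairings $\int\varphi_j^*\,d\widetilde{C}_{q-1}(\varphi_i,\cdot)$ are the honest values of $\widetilde{W}_{n+2-q}(\varphi_i,\varphi_j)$ rather than merely $+\infty=+\infty$. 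I would handle this by first showing that admissibility of $\varphi_1$ for $\varphi_2$ plus $\varphi_i\in\mathcal{L}_o'$ (which contains the growth bound $\varphi_i(x)\le b|x|^{1+a}$) propagates the finiteness of the relevant Gaussian integrals from one function to the other via the pointwise comparison $\varphi_i^*\le \varphi_j^*+c\,(\text{convex})$, invoking Lemma \ref{finiteness}; once finiteness is in hand, Theorem \ref{integral formula} applies verbatim and the rest is the two-line symmetric argument above. A secondary subtlety is that Theorem \ref{integral formula} only asserts $\widetilde W_{n+1-q}(\varphi,\psi)\in[0,+\infty]$ in general, so one should confirm these mixed quantities are actually finite here before subtracting them; this again follows from the finiteness of $\widetilde W_{n+2-q}(\varphi_i,\varphi_i)$, which Lemma \ref{finiteness} guarantees under the stated integrability.
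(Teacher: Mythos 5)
Your overall strategy — pair the hypothesis with the integral representation of $\widetilde W(\cdot,\cdot)$, feed the resulting cross-identities into the Minkowski inequality taken in both orders, and then invoke the equality characterization — is the same as the paper's. But there is a genuine gap in the middle step that the paper avoids.

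\textbf{The gap.} From the integral formula and the hypothesis $\widetilde{C}_{q-1}(\varphi_1,\cdot)=\widetilde{C}_{q-1}(\varphi_2,\cdot)$ you correctly obtain the two \emph{cross}-identities
\begin{equation*}
\widetilde{W}_{n+2-q}(\varphi_1,\varphi_2)=\widetilde{W}_{n+2-q}(\varphi_2,\varphi_2),\qquad
\widetilde{W}_{n+2-q}(\varphi_2,\varphi_1)=\widetilde{W}_{n+2-q}(\varphi_1,\varphi_1).
\end{equation*}
But you then write $0=\widetilde{W}_{n+2-q}(\varphi_1,\varphi_2)-\widetilde{W}_{n+2-q}(\varphi_1,\varphi_1)$ and $0=\widetilde{W}_{n+2-q}(\varphi_2,\varphi_1)-\widetilde{W}_{n+2-q}(\varphi_2,\varphi_2)$, i.e.\ you use the \emph{self}-identities $\widetilde W(\varphi_1,\varphi_2)=\widetilde W(\varphi_1,\varphi_1)$ and $\widetilde W(\varphi_2,\varphi_1)=\widetilde W(\varphi_2,\varphi_2)$, which do \emph{not} follow from what you derived. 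Once the left-hand sides are not forced to be zero, your attempt to squeeze out $\overline W_{n+2-q}(\varphi_1)=\overline W_{n+2-q}(\varphi_2)$ from the two Minkowski inequalities collapses: adding the two inequalities only gives $(b-a)(a^{1-q}-b^{1-q})\le 0$ (with $a,b$ the two normalized quermassintegrals), which is automatically true for $1-q>0$ and carries no information.

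\textbf{What the paper does instead.} The paper's first move is to extract the normalization equality directly from the \emph{total masses} of the two dual curvature measures: $\widetilde C_{q-1}(\varphi_i,\cdot)$ has total mass $\int\varphi_i^{1-q}\,d\gamma_n=\widetilde W_{n+1-q}(\varphi_i)$, so the hypothesis immediately gives $\overline W_{n+1-q}(\varphi_1)=\overline W_{n+1-q}(\varphi_2)$. This is the step your plan is missing; once the normalized quermassintegrals agree, the fraction term in the Minkowski inequality vanishes, the two resulting inequalities $\widetilde W(\varphi_1,\varphi_2)\ge\widetilde W(\varphi_1,\varphi_1)$ and $\widetilde W(\varphi_2,\varphi_1)\ge\widetilde W(\varphi_2,\varphi_2)$ combined with the two cross-identities force $\widetilde W(\varphi_1,\varphi_1)=\widetilde W(\varphi_2,\varphi_2)$, and hence equality throughout. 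Only at this point can you appeal to the equality characterization of Theorem~\ref{Minkowski inequality}.

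\textbf{A secondary issue.} Your reading of the equality case is off. Theorem~\ref{Minkowski inequality}, as stated, yields translation when its internal parameter equals $0$ and $\varphi=\psi$ when that parameter is $<0$. Under your shift the parameter is $q-1$, so for $q=0$ the parameter is $-1<0$, which lands in the $\varphi=\psi$ branch, not the translation branch. Your remark that ``parameter $-1$ gives agreement up to translation'' is not what the theorem says; to recover translation precisely at $q=0$ one must apply the Minkowski inequality at parameter $q$ (not $q-1$), as the paper does.

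In short: the geometric idea is right, and the identities you extract from the integral formula are right, but the derivation of the normalization equality $\overline W(\varphi_1)=\overline W(\varphi_2)$ is wrong — it must come from the total masses of the equal curvature measures, not from the Minkowski inequality — and the bookkeeping of the equality case at the shifted parameter does not reproduce the claimed translation invariance at $q=0$.
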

\begin{proof}
Firstly notice that the equality $\widetilde{C}_{q-1}(\varphi_1,\cdot)=\widetilde{C}_{q-1}(\varphi_2,\cdot)$ implies $\overline{W}_{n+1-q}(\varphi_1)^{}
 =\overline{W}_{n+1-q}(\varphi_2)^{}$.
By the fact $\overline{W}_{n+1-q}(\varphi_1)^{}
 =\overline{W}_{n+1-q}(\varphi_2)^{}$ and  (\ref{Minkowski inequality}), we have
\begin{eqnarray}\label{Uniqueness1}
\widetilde{W}_{n+1-q}(\varphi_1,\varphi_2)\geq\widetilde{W}_{n+1-q}(\varphi_1,\varphi_1),
\end{eqnarray}
and
\begin{eqnarray}\label{Uniqueness2}
\widetilde{W}_{n+1-q}(\varphi_2,\varphi_1)\geq\widetilde{W}_{n+1-q}(\varphi_2,\varphi_2).
\end{eqnarray}

On the other hand,  since $\widetilde{C}_{q-1}(\varphi_1,\cdot)=\widetilde{C}_{q-1}(\varphi_2,\cdot)$, then Theorem \ref{integral formula} yields
\begin{eqnarray*}
\widetilde{W}_{n+1-q}(\varphi_1,\psi)=\widetilde{W}_{n+1-q}(\varphi_2,\psi),
\end{eqnarray*}
for any convex function $\psi$ with $\psi(0)=0$. In particular, taking $\psi=\varphi_1$ or $\psi=\varphi_2$, one sees that
\begin{eqnarray}\label{Uniqueness3}
\widetilde{W}_{n+1-q}(\varphi_1,\varphi_1)=\widetilde{W}_{n+1-q}(\varphi_2,\varphi_1),
\end{eqnarray}
and
\begin{eqnarray}\label{Uniqueness4}
\widetilde{W}_{n+1-q}(\varphi_2,\varphi_2)=\widetilde{W}_{n+1-q}(\varphi_1,\varphi_2).
\end{eqnarray}
Equalities (\ref{Uniqueness3}) and (\ref{Uniqueness4}) guarantee the inequalities (\ref{Uniqueness1}) and (\ref{Uniqueness2}) both hold with equality. Finally, the equality condition of (\ref{Minkowski inequality}) has make sure that $\varphi_1(x)=\varphi_2(x-x_0)$ for some $x_0\in \mathbb{R}^n$ when $q=0$ and $\varphi_1(x)=\varphi_2(x)$ when $q<0$.
\end{proof}

\vskip 1 cm

%%%%%%%%%%%%%%%%%%%%%%%%%%%%%%%%%%%%%%%%%%%%%%%%%%%%%%%%%%%%%%%%%%%%%%%%%%%%%%%%%%%%%%%%%%%%%%%%%%%%%%%%%%%%%

\end{document}